\newtheorem{Theorem}{Theorem}[section]
\newtheorem{Lemma}[Theorem]{Lemma}
\newtheorem{Proposition}[Theorem]{Proposition}
\newtheorem{Definition}[Theorem]{Definition}
\newtheorem{Remark}[Theorem]{Remark}
\def\V{\mbox{Var}}
\def\R\re
\def\V{\bf V}
\def \re{{\mathbb R}}
\def \C{{\mathbb C}}
\def \V{{\bf V}}
\def \X{X_{\perp}}
\def \H{\mathcal H}
\newcommand{\id}{\mathrm{Id}}
\newcommand{\hilb}{\mathcal H}
\newcommand{\abs}[1]{\lvert #1 \rvert}
\begin{document}
\title[The attenuated ray transform for connections]{The attenuated ray transform for connections and Higgs fields}

\author[G.P. Paternain]{Gabriel P. Paternain}
 \address{ Department of Pure Mathematics and Mathematical Statistics,
University of Cambridge,
Cambridge CB3 0WB, UK}
 \email {g.p.paternain@dpmms.cam.ac.uk}

\author[M. Salo]{Mikko Salo}
\address{Department of Mathematics and Statistics, University of Helsinki and University of Jyv\"askyl\"a}
\email{mikko.salo@helsinki.fi}

\author[G. Uhlmann]{Gunther Uhlmann}
\address{Department of Mathematics, University of Washington and University of California, Irvine}

\email{gunther@math.washington.edu}




\begin{abstract} 
We show that for a simple surface with boundary the attenuated ray transform in the presence of a unitary connection and a skew-Hermitian Higgs field is injective modulo the natural obstruction for functions and vector fields. We also show that the connection and the Higgs field are uniquely determined by the scattering relation modulo a gauge transformation. The proofs involve a Pestov type energy identity for connections together with holomorphic gauge transformations which arrange the curvature of the connection to have definite sign.
\end{abstract}

\maketitle

\section{Introduction}

In this paper we consider a generalization of the attenuated ray
transform in two dimensions to the case when the attenuation is given by a connection and a
Higgs field. Before describing the problem in more detail we recall
the case of the attenuated ray transform on manifolds.

Let $(M,g)$ be a compact oriented Riemannian manifold with smooth boundary, and let $SM = \{(x,v) \in TM \,;\, \abs{v} = 1 \}$ be the unit tangent bundle. The geodesics going from $\partial M$ into $M$ can be parametrized by the set $\partial_+ (SM) = \{(x,v) \in SM \,;\, x \in \partial M, \langle v,\nu \rangle \leq 0 \}$ where $\nu$ is the outer unit normal vector to $\partial M$. For any $(x,v) \in SM$ we let $t \mapsto \gamma(t,x,v)$ be the geodesic starting from $x$ in direction $v$. We assume that $(M,g)$ is nontrapping, which means that the time $\tau(x,v)$ when the geodesic $\gamma(t,x,v)$ exits $M$ is finite for each $(x,v) \in SM$.

If $a \in C^{\infty}(M)$ is the attenuation coefficient, consider the attenuated ray transform of a function $f \in C^{\infty}(M)$, defined for $(x,v) \in \partial_+ (SM)$ by 
\begin{equation*}
I_a f(x,v) = \int_0^{\tau(x,v)} f(\gamma(t,x,v)) \,\text{exp} \Big[ \int_0^{t} a(\gamma(s,x,v)) \,ds \Big] \,dt.
\end{equation*}

In \cite{SaU} it is shown that the attenuated ray transform is injective for simple two dimensional manifolds. Moreover in this article stability estimates and a reconstruction procedure of the function from the attenuated transform are given. Also the case of integrating along one forms was considered.
A compact Riemannian manifold with boundary is said to be {\it simple} if for any point $p \in M$   the exponential map $\exp_p$   is a diffeomorphism onto $M$, and if the boundary is strictly convex. The notion of simplicity arises naturally in the context of the boundary rigidity problem \cite{Mi}.
In the case of Euclidean space with the Euclidean metric the attenuated ray transform is the basis of the medical imaging technology of SPECT and has been extensively studied, see 
\cite{finch} for a review and also the remarks after Theorem \ref{thm:injective_higgs}.

When studying ray transforms it is very useful to consider them as boundary values of solutions of transport equations. For the case considered above
the appropriate transport equation is  
$$
Xu + au = -f \ \ \text{in $SM$}, \quad u|_{\partial_-(SM)} = 0.
$$
Here $X$ is the geodesic vector field and  $\partial_- (SM) = \{(x,v) \in SM \,;\, x \in \partial M, \langle v,\nu \rangle \geq 0 \}$. The attenuated ray transform is then given by 
$$I_a f := u|_{\partial_+ (SM)}.$$
 
In this paper we will consider a generalization of the above setup to systems, where instead of a scalar function $a$ on $M$ the attenuation is given by a connection and a Higgs field on the trivial bundle $M \times \C^n$. For us a connection $A$ will be a complex $n\times n$ matrix whose
entries are smooth 1-forms on $M$. Another way to think of $A$ is to regard
it as a smooth map $A:TM\to \C^{n\times n}$ which is linear in $v\in T_{x}M$ for
each $x\in M$. Very often in physics and geometry one considers {\it unitary} or {\it Hermitian} connections. This means that the range of $A$ is restricted to skew-Hermitian matrices. In other words, if we denote by $\mathfrak{u}(n)$ the Lie algebra of the unitary group $U(n)$, we have a smooth map
$A:TM\to \mathfrak{u}(n)$ which is linear in the velocities.
There is yet another equivalent way to phrase this. The connection $A$ induces
a covariant derivative $d_{A}$ on sections $s\in C^{\infty}(M,\C^n)$ by setting
$d_{A}s=ds+As$. Then $A$ being Hermitian or unitary is equivalent to requiring compatibility with the standard Hermitian inner product of $\C^n$ in the sense that
\[d\langle s_{1},s_{2}\rangle=\langle d_{A}s_{1},s_{2}\rangle+\langle s_{1},d_{A}s_{2}\rangle\]
for any pair of functions $s_{1},s_{2}$.
Another geometric object which is often introduced in physics is a {\it Higgs
field}. For us, this means a smooth matrix-valued function
$\Phi:M\to \C^{n\times n}$. Often in gauge theories, the structure group
is $U(n)$ and the field $\Phi$ is required to take values in $\mathfrak{u}(n)$.
We call a Higgs field $\Phi:M\to\mathfrak{u}(n)$ a skew-Hermitian Higgs field.

The pairs $(A,\Phi)$ often appear in the so-called Yang-Mills-Higgs theories
and the most popular and important structure groups are $U(n)$ or $SU(n)$.
A good example of this is the Bogomolny equation in Minkowski $(2+1)$-space given
by $d_{A}\Phi=\star F_{A}$. Here $d_{A}$ stands for the covariant derivative
induced on endomorphism $d_{A}\Phi=d\Phi+[A,\Phi]$, $F_{A}=dA+A\wedge A$ is the curvature of $A$ and $\star$ is the Hodge star operator of Minkowski space.
The Bogomolny equation appears as a reduction of the self-dual Yang-Mills equation in $(2+2)$-space and has been object of intense study in the literature
of Solitons and Integrable Systems, see for instance \cite{D}, \cite[Chapter 8]{MS}, \cite[Chapter 4]{HSW} and \cite{MW}.

Consider the following transport equation for $u: SM \to \C^n$,
$$Xu + Au + \Phi u = -f \ \ \text{in $SM$}, \quad u|_{\partial_-(SM)} = 0.
$$ Here $A(x,v)$ (the restriction of $A$ to $SM$) and $\Phi(x)$ act on functions on $SM$ by multiplication. On a fixed geodesic the transport equation becomes a linear system of ODEs with zero initial condition, and therefore this equation has a unique solution $u=u^f$. 

\begin{Definition}The geodesic ray transform of $f \in C^{\infty}(SM,\C^n)$ with attenuation determined by the pair $(A,\Phi)$ is given by$$I_{A,\Phi} f := u^f|_{\partial_+(SM)}.$$ 
When $\Phi=0$ we abbreviate $I_{A}:=I_{A,0}$.
\end{Definition}

We state our first main result for the case of functions independent of the fiber and Higgs field
equal to zero.

\begin{Theorem} Let $M$ be a compact simple surface. Assume that $f:M\to\C^n$ is a smooth function and let $A: TM \to \mathfrak{u}(n)$ be a unitary connection. If $I_{A}(f)=0$, then $f=0.$
\end{Theorem}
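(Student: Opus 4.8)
The strategy is to reduce the injectivity of $I_A$ to a Pestov-type energy identity on $SM$, following the approach of \cite{SaU} for the scalar attenuated transform but now adapted to the connection setting. First I would show that if $I_A(f) = 0$ then the solution $u = u^f$ of the transport equation $Xu + Au = -f$ with $u|_{\partial_-(SM)} = 0$ also vanishes on $\partial_+(SM)$, hence $u|_{\partial(SM)} = 0$; moreover since $f$ is a function on $M$ (independent of $v$), the solution $u$ should turn out to have a special fiberwise structure. Writing $u = \sum_{k} u_k$ in terms of the vertical Fourier decomposition (spherical harmonics in $v$), the equation $Xu + Au + f = 0$ together with the fact that $X = \eta_+ + \eta_-$ splits into raising and lowering parts, and that $A$ restricted to $SM$ has only Fourier degrees $\pm 1$, gives a system coupling consecutive Fourier modes. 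The key point is that $u$ solves a first-order equation and one wants to conclude $u$ is essentially holomorphic (only nonnegative Fourier modes) or antiholomorphic.

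**Holomorphic gauge / curvature sign.** The main new ingredient, and the hard part, is to use a holomorphic gauge transformation to arrange that the curvature $F_A = dA$ has a definite sign. Concretely, one seeks a matrix-valued function on $SM$ (or a suitable "holomorphic integrating factor") conjugating the connection so that the relevant commutator/curvature term in the energy identity becomes a nonnegative operator. This is exactly the step flagged in the abstract: "holomorphic gauge transformations which arrange the curvature of the connection to have definite sign." I expect this to require solving a $\bar\partial$-type equation on the simple surface $M$ (using that $M$ is simple, hence diffeomorphic to a disk and carrying plenty of holomorphic functions), producing $W: SM \to GL(n,\C)$ with appropriate holomorphicity in the angular variable so that $W^{-1}(X + A)W$ has a controllable structure. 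The unitarity of $A$ ensures the curvature is skew-Hermitian, so "definite sign" should be interpreted after an appropriate pairing.

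**The energy identity and conclusion.** With the gauge fixed, I would derive the Pestov energy identity for the operator $X + A$: for $w$ with $w|_{\partial(SM)} = 0$, an identity of the schematic form
\begin{equation*}
\|X_\perp w\|^2 = \|Xw\|^2 + (\text{curvature term})(w,w) - \|Vw\|^2 \cdot (\text{something}),
\end{equation*}
where $X_\perp$ is the vector field $\X$ defined in the preamble and $V$ is the vertical vector field. The sign of the curvature term, arranged by the gauge transformation, forces $w$ to be independent of $v$ or to lie in a single Fourier mode; combined with the structure of the transport equation this yields $f = 0$. The main obstacle is genuinely the construction of the holomorphic gauge transformation with the right regularity and the right effect on the curvature term — once that is in place, the energy identity and the Fourier-mode bookkeeping are routine adaptations of the scalar case in \cite{SaU}. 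A secondary technical point is handling the matrix-valued (noncommutative) nature of everything: integration by parts identities that are scalar in \cite{SaU} must be done carefully with the Hermitian inner product on $\C^n$, using skew-Hermiticity of $A$ to keep boundary terms and cross terms under control.
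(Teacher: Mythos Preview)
Your overall strategy is correct and matches the paper's: reduce to the transport equation, use the Pestov energy identity for $X+A$, and arrange the curvature term to have a definite sign via a holomorphic gauge so that the identity forces $u$ to be holomorphic (and, by the symmetric argument, antiholomorphic), hence $u=u_0(x)$ and $f=0$.

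However, you are making the gauge step much harder than it needs to be. The paper does \emph{not} construct a matrix-valued holomorphic integrating factor $W:SM\to GL(n,\C)$. Instead it exploits the fact that the identity endomorphism commutes with everything: one perturbs the connection by a \emph{scalar} multiple of the identity,
\[
A_s := A - is\varphi\,\mathrm{Id},\qquad d\varphi=\omega_g,
\]
so that $i\star F_{A_s}=i\star F_A + s\,\mathrm{Id}$ is positive (or negative) definite for $|s|$ large. The corresponding integrating factor is also scalar: by the results on holomorphic integrating factors in the scalar case (Section~\ref{sec:hol}) there exists a holomorphic (resp.\ antiholomorphic) $w\in C^\infty(SM,\C)$ with $Xw=i\varphi$, and one simply sets $u_s=e^{sw}u$, which now solves $(X+A_s)u_s=-e^{sw}f$. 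Because $w$ is holomorphic and $f$ depends only on $x$, the right-hand side has no Fourier modes of degree $\le -2$, and the energy identity with the definite curvature term forces $u_s$ (hence $u$) to be holomorphic. Running the argument with the opposite sign of $s$ and antiholomorphic $w$ gives antiholomorphicity.

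Two smaller points you should not overlook: (i) before applying the energy identity you need $u^f$ to be \emph{smooth} on all of $SM$, which is not automatic and is the content of Proposition~\ref{prop:smooth}; (ii) the precise form of the energy identity (Lemma~\ref{lemma:pestov}) has the curvature entering as $-\langle \star F_A u, Vu\rangle$, and it is this term---not a bare $\|Vu\|^2$ term---whose sign is controlled by the scalar shift.
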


In fact the last result is a corollary of the following theorem, which also includes a skew-Hermitian Higgs field and applies to functions $f$ which are sums of $0$-forms and $1$-forms.

\begin{Theorem} Let $M$ be a compact simple surface. Assume that $f:SM\to\C^n$ is a smooth function of the form
$F(x)+\alpha_{j}(x)v^j$, where $F:M\to\C^n$ is a smooth function
and $\alpha$ is a $\C^n$-valued 1-form. Let also $A: TM \to \mathfrak{u}(n)$ be a unitary connection and $\Phi: M \to \mathfrak{u}(n)$ a skew-Hermitian matrix function. If $I_{A,\Phi}(f)=0$, then
$F = \Phi p$ and $\alpha=d_{A}p$, where $p:M\to\C^n$ is a smooth
function with $p|_{\partial M}=0$.
\label{thm:injective_higgs}
\end{Theorem}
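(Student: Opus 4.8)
The plan is to pass to the transport equation and analyse the angular Fourier modes of its solution. Since $I_{A,\Phi}(f)=0$, the solution $u=u^f$ of $Xu+Au+\Phi u=-f$ with $u|_{\partial_-(SM)}=0$ also vanishes on $\partial_+(SM)$, hence on all of $\partial(SM)$. Expand $u=\sum_{k\in\Z}u_k$ into vertical Fourier modes (the eigenspaces of $-iV$, with $V$ the vertical field on $SM$), split $X=\eta_++\eta_-$ into its degree $+1$ and $-1$ (``raising'' and ``lowering'') parts, and write $A|_{SM}=A_{1}+A_{-1}$ for the corresponding splitting of the $1$-form $A$. Since $f=F+\alpha$ has only the modes $f_0=F$ and $f_{\pm1}=\alpha_{\pm1}$, the transport equation becomes the triangular system
\[
(\eta_++A_{1})\,u_{k-1}+(\eta_-+A_{-1})\,u_{k+1}+\Phi u_k=-f_k,\qquad k\in\Z.
\]
The theorem is then equivalent to the assertion that $u_k=0$ for every $k\ne0$. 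Indeed, if $u=u_0$, put $p:=-u_0$: the degree $0$ equation reads $\Phi u_0=-F$, i.e.\ $F=\Phi p$; the degree $\pm1$ equations read $(\eta_\pm+A_{\pm1})u_0=-\alpha_{\pm1}$, and since the restriction to $SM$ of $d_Ap=dp+Ap$ equals $(\eta_++\eta_-)p+(A_1+A_{-1})p$, they say precisely $\alpha=d_Ap$; finally $u|_{\partial(SM)}=0$ gives $p|_{\partial M}=0$. So everything reduces to showing that $u$ is fibrewise constant.

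For this I would use \emph{holomorphic integrating factors}: matrix-valued maps $R_+,R_-:SM\to GL(n,\C)$ solving $(X+A+\Phi)R_\pm=0$, with $R_+$ and $R_+^{-1}$ fibrewise holomorphic (only nonnegative Fourier modes) and $R_-$ and $R_-^{-1}$ fibrewise antiholomorphic. Granting these, $v_\pm:=R_\pm^{-1}u$ satisfy the \emph{unattenuated} transport equation $Xv_\pm=-R_\pm^{-1}f$ with $v_\pm|_{\partial(SM)}=0$. Since $R_-^{-1}$ has modes $\le0$ and $f$ has modes in $\{-1,0,1\}$, $R_-^{-1}f$ has modes $\le1$; if $u$ had a nonzero mode $u_{k_1}$ of maximal degree $k_1\ge1$, then the degree $k_1+1$ part of $Xv_-$ — which is $\eta_+$ applied to the top mode of $v_-$, a function vanishing if and only if $u_{k_1}$ does — would equal the degree $k_1+1$ part of $-R_-^{-1}f$, namely $0$; but $\eta_+$ is injective on modes of degree $\ge1$, a standard consequence of the absence of conjugate points on a simple surface, so $u_{k_1}=0$, a contradiction. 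Hence $u$ has no modes of degree $\ge1$, and symmetrically $R_+^{-1}f$ (modes $\ge-1$) together with the injectivity of $\eta_-$ on modes of degree $\le-1$ shows $u$ has no modes of degree $\le-1$. Thus $u=u_0$, and the first paragraph concludes the argument.

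The crux is the construction of $R_\pm$, and this is where simplicity and a Pestov-type energy identity for connections enter — and where I expect the real work to be. Writing $R_+=\sum_{k\ge0}R_k$ with $R_0=\id$, the equation $(X+A+\Phi)R_+=0$ becomes an infinite triangular system for the $R_k$, and its solvability is controlled by an $L^2$ energy estimate on the simple surface $M$. In the abelian (scalar) case such integrating factors always exist; for a non-abelian connection the curvature $\star F_A$ (together with a $d_A\Phi$-type term coming from the Higgs field) enters the energy identity with no definite sign and obstructs the construction. The remedy — the matrix analogue of a classical device — is to first apply a holomorphic, in general non-unitary, gauge transformation arranging $\star F_A$ to have a definite sign, so that the relevant $\bar\partial$-type equation on $M$ becomes solvable via a H\"ormander-type $L^2$ estimate; since the whole problem is invariant under gauge transformations, once $R_\pm$ are built in the favorable gauge the conclusion transfers back. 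The Higgs field can be carried along as a zeroth-order term in the energy identity, or absorbed by regarding the pair $(A,\Phi)$ as a connection on a bundle over $M\times S^1$. I expect the two main obstacles to be exactly this curvature/gauge step and verifying that the energy identity for connections has the required structure with boundary terms that vanish under the condition $u|_{\partial(SM)}=0$.
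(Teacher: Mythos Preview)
Your overall outline has the right shape but contains two genuine gaps, and it diverges from the paper's actual route in an important way.

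\textbf{The ``maximal degree'' argument does not work.} You argue by picking a top Fourier mode $u_{k_1}$ of $u$. But $u$ is merely smooth on $SM$; its vertical Fourier expansion has, in general, infinitely many nonzero terms, so there is no largest $k_1$. Even after multiplying by an antiholomorphic $R_-^{-1}$, the right-hand side $R_-^{-1}f$ has modes bounded above by $1$ but unbounded below, so you cannot peel off modes one at a time. The correct statement you would need is: if $Xv=-g$, $v|_{\partial(SM)}=0$, and $g_k=0$ for $k\ge 2$, then $v_k=0$ for $k\ge 1$. That is not a one-line consequence of ``$\eta_+$ injective''; it is essentially the content of the paper's Theorem~6.4 (there stated with a connection), and its proof \emph{is} the Pestov energy identity.

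\textbf{Matrix holomorphic integrating factors are not available here, and your sketch for building them does not work.} You propose a (non-unitary) gauge transformation to give $\star F_A$ a definite sign. But any gauge change $A\mapsto Q^{-1}dQ+Q^{-1}AQ$ conjugates the curvature, $F\mapsto Q^{-1}FQ$, so the spectrum of $i\star F_A$ is unchanged; you cannot make it definite this way. The paper's device is different and more modest: it uses only a \emph{scalar}, direction-dependent integrating factor $e^{sw}$ with $w$ fibrewise holomorphic and $Xw=-i\varphi$, where $d\varphi=\omega_g$. Because $e^{sw}$ is scalar it commutes with everything, and conjugating the transport operator by it replaces $A$ by the genuine connection $A_s=A+is\varphi\,\id$, whose curvature satisfies $i\star F_{A_s}=i\star F_A-s\,\id$ and hence becomes negative definite for $s$ large. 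No matrix integrating factor is ever constructed.

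\textbf{How the paper actually proceeds.} With $u_s=e^{sw}u$ solving $(X+A_s+\Phi)u_s=-e^{sw}f$ and $u_s|_{\partial(SM)}=0$, one applies the Pestov identity for connections (the paper's Lemma~6.1) directly to $v=(\id-i\H)u_s-(u_s)_0$. The curvature term contributes $\sum_{k\le -1}|k|\langle -i\star F_{A_s}v_k,v_k\rangle\ge (s-C)\sum|k||v_k|^2$, while the Higgs terms are bounded by $C_{A,\Phi}\sum|v_k|^2$ with a constant \emph{independent of $s$} (this estimate, not an absorption into a connection on $M\times S^1$, is the real work in the Higgs case). Taking $s$ large forces $v=0$, so $u_s$ and hence $u$ is holomorphic; the antiholomorphic half is symmetric. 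Then your first paragraph finishes the proof.

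In short: drop the matrix $R_\pm$ and the top-mode induction, keep the energy identity, and run it \emph{with} the connection after the scalar conjugation that shifts the curvature.
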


We remark that in connection with injectivity results for ray transforms, there is great interest in reconstruction procedures and inversion formulas. For the attenuated ray transform in $\re^2$ with Euclidean metric and scalar attenuation function, an explicit inversion formula was proved by R. Novikov \cite{No_inversion}. A related formula also including $1$-form attenuations appears in \cite{BS}, inversion formulas for matrix attenuations in Euclidean space are given in \cite{E, No}, and the case of hyperbolic space ${\mathbb H}^2$ is considered in \cite{Bal}. We are not aware of explicit inversion formulas in geometries with nonconstant curvature in the literature; even with zero attenuation, only Fredholm type inversion formulas on simple surfaces are available in general \cite{PU0} (see however the recent work \cite{BalHoell}). A possible reconstruction procedure for the scalar attenuated ray transform on simple surfaces was discussed in \cite{SaU}. The methods in the current paper suggest a similar procedure for systems, except that the current proof of Theorem \ref{thm:pi} is not constructive.

Let us move to the final main result of this paper, concerning an inverse problem with scattering data. On a nontrapping compact manifold $(M,g)$ with strictly convex boundary, the scattering relation $\alpha = \alpha_g: \partial_+(SM) \to \partial_-(SM)$ maps a starting point and direction of a geodesic to the end point and direction. If $(M,g)$ is simple, then knowing $\alpha_g$ is equivalent to knowing the boundary distance function $d_g$ which encodes the distances between any pair of boundary points \cite{Mi}. On two dimensional simple manifolds, the boundary distance function $d_g$ determines the metric $g$ up to an isometry which fixes the boundary \cite{PU}.

Given a connection and a Higgs field on the bundle $M \times \C^n$, there is an additional piece of scattering data. Consider the unique matrix solution
$U_{-}:SM\to GL(n,\C)$ to the transport equation
$$
X U_{-} + A U_- + \Phi U_{-} = 0 \text{ in } SM, \quad U_{-}|_{\partial_{+}(SM)}=\id.
$$

\begin{Definition}
The scattering data (or relation) corresponding to a matrix attenuation pair $(A,\Phi)$ in $(M,g)$ is the map
$$
C^{A,\Phi}_{-}:\partial_{-}(SM)\to GL(n,\C), \ \  C^{A,\Phi}_{-} := U_{-}|_{\partial_{-}(SM)}.
$$
\end{Definition}

The scattering relation for the case of a connection with Higgs field is gauge invariant. 
Let $Q: M \to GL(n,\C)$ be a smooth function taking values in invertible matrices, then $Q^{-1} U_{-}$ satisfies
$$(X+Q^{-1}(X+A)Q + Q^{-1} \Phi Q)(Q^{-1}U_-) = 0 \ \ \text{in $SM$}, \ \ \ Q^{-1} U_-|_{\partial_+(SM)} = Q^{-1}|_{\partial_+(SM)}.
$$
The scattering data has therefore the gauge invariance
$$C^{Q^{-1}(X+A)Q,Q^{-1} \Phi Q}_- = C^{A,\Phi}_- \quad \text{if $Q \in C^{\infty}(M,GL(n,\C))$ satisfies $Q|_{\partial M} = \id$}.
$$It follows that from the knowledge of $C^{A,\Phi}_{-}$ one can only expect to recover $A$ and $\Phi$ up to a gauge transformation via $Q$ which satisfies $Q|_{\partial M} = \id$. If $A$ is unitary and $\Phi$ is skew-Hermitian, the map $U_{-}$ and the scattering relation
$C^{A,\Phi}_{-}$ take values in the unitary group $U(n)$ and the scattering relation remains unchanged under unitary gauge transformations which
are the identity on the boundary.

In Section \ref{sec:scattering_relation}, we prove that in the unitary case the scattering relation determines the pair $(A,\Phi)$ up to the natural gauge equivalence; the proof uses the injectivity result for the attenuated ray transform given in Theorem \ref{thm:injective_higgs}.

\begin{Theorem} Assume $M$ is a compact simple surface, let
$A$ and $B$ be two Hermitian connections, and let $\Phi$ and $\Psi$ be two skew-Hermitian Higgs fields.
Then $C^{A,\Phi}_{-}=C^{B,\Psi}_{-}$ implies that there exists a smooth
$U:M\to U(n)$ such that $U|_{\partial M}=\id$ and
$B=U^{-1}dU+U^{-1}AU$, $\Psi = U^{-1} \Phi U$.
\label{thm:inverse}
\end{Theorem}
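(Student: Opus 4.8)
The plan is to reduce the scattering-relation problem to the injectivity statement in Theorem~\ref{thm:injective_higgs}, via the standard trick of comparing the two transport matrices and showing their ``difference'' is the attenuated ray transform of something that must vanish. Concretely, let $U_-^{A,\Phi}$ and $U_-^{B,\Psi}$ be the matrix solutions of the two transport equations, both equal to $\id$ on $\partial_+(SM)$. The hypothesis $C^{A,\Phi}_-=C^{B,\Psi}_-$ says these agree on $\partial_-(SM)$. Set $U := U_-^{A,\Phi} (U_-^{B,\Psi})^{-1} : SM \to U(n)$; then $U=\id$ on all of $\partial(SM)$ (on $\partial_+$ by the initial conditions, on $\partial_-$ by the hypothesis). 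The first key step is to show that $U$ does not depend on $v$, i.e.\ it descends to a function $U:M\to U(n)$ with $U|_{\partial M}=\id$. This is where the attenuated ray transform enters: one computes $XU$ using the two transport equations and finds that $U$ satisfies a transport equation of the form $XU + AU - UB + \Phi U - U\Psi = 0$, or after rearranging, $X U = -(A-UBU^{-1})U - (\Phi - U\Psi U^{-1})U$. I would then argue as follows: consider the connection $A$ and Higgs field $\Phi$ as ``reference'' data and think of the pair $(W := A - U B U^{-1} \text{ contributions})$ as a potential attenuation; more cleanly, observe that if $U$ were independent of $v$ we would be done by setting things up as a gauge transformation, so the content is precisely to rule out $v$-dependence.

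The cleanest route, which I expect the authors to take, is this: define $f$ on $SM$ by testing $U$ against the transport equations. Write $U_- = U_-^{A,\Phi}$ and $V_- = U_-^{B,\Psi}$. Differentiating $U = U_- V_-^{-1}$ along the geodesic flow and substituting $XU_- = -(A+\Phi)U_-$ and $XV_-^{-1} = V_-^{-1}(A+\Phi)$ ... wait, $X V_- = -(B+\Psi)V_-$ gives $X(V_-^{-1}) = V_-^{-1}(B+\Psi)$, so
\[
XU = (XU_-)V_-^{-1} + U_-(XV_-^{-1}) = -(A+\Phi)U_-V_-^{-1} + U_-V_-^{-1}(B+\Psi) = -(A+\Phi)U + U(B+\Psi).
\]
Hence $XU + AU + \Phi U = U B + U\Psi$, i.e.\ each column $u$ of $U$ solves $Xu + Au + \Phi u = -f$ with $f = -U(B+\Psi)$. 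Now $B$ restricted to $SM$ is linear in $v$ and $\Psi$ is a $0$-form, but $U$ itself is a priori a general function on $SM$, so $f=-UB-U\Psi$ is not yet of the form ``$0$-form $+$ $1$-form.'' To apply Theorem~\ref{thm:injective_higgs}, one must first know $U$ is independent of $v$. The standard argument for this: $U$ solves a first-order transport equation along geodesics with $U|_{\partial(SM)}=\id$; one shows the function $W := U - \id$ ... actually $U$ satisfies $XU = -(A+\Phi)(U-\id) + (U-\id)(B+\Psi) + (B+\Psi) - (A+\Phi) \cdot \id$; hmm, let me reorganize: $XU + AU - UB + \Phi U - U\Psi = 0$. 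Since $U|_{\partial_+(SM)}=\id$ and $U|_{\partial_-(SM)}=\id$, $U - \id$ has zero boundary values on all of $\partial(SM)$.

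So the real argument is: $P := U - \id : SM \to \C^{n\times n}$ satisfies $XP + AP - PB + \Phi P - P\Psi = (B - A) + (\Psi - \Phi) =: -f_0$, with $P|_{\partial(SM)}=0$. Here $f_0 = (A - B) + (\Phi - \Psi)$ \emph{is} of the form $1$-form plus $0$-form. Viewing this columnwise, each column of $P$ is the solution of an attenuated transport equation (for the connection $A$ acting on the left and with an extra zeroth-order term coming from $-PB$, $\Phi P$, $-P\Psi$ — but note the right-hand side $-f_0$ is fixed and $v$-polynomial of degree $\le 1$). The subtlety is that the attenuation here is not purely the connection $A$ plus Higgs $\Phi$ acting on $\C^n$; the terms $-PB + \Phi P - P\Psi$ mix. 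I would handle this by working on the bundle $M\times \C^{n\times n} \cong M \times \C^{n^2}$: the map $Z \mapsto AZ - ZB$ is a connection $\hat A$ on this larger trivial bundle (it is the induced connection on $\mathrm{Hom}$, and it is unitary since $A,B$ are), and $Z \mapsto \Phi Z - Z\Psi$ is a skew-Hermitian Higgs field $\hat\Phi$. Then $P$ solves $XP + \hat A P + \hat\Phi P = -f_0$ in $SM$ with $P|_{\partial_-(SM)}=0$ and also $P|_{\partial_+(SM)}=0$, so $I_{\hat A,\hat\Phi}(f_0) = 0$. Applying Theorem~\ref{thm:injective_higgs} on the bundle $M\times\C^{n^2}$: since $f_0 = \alpha + F$ with $\alpha = A - B$ (a $1$-form) and $F = \Phi - \Psi$ (a $0$-form), there is $p : M \to \C^{n\times n}$, $p|_{\partial M}=0$, with $\alpha = d_{\hat A} p$ and $F = \hat\Phi p$; that is, $A - B = dp + Ap - pB$ and $\Phi - \Psi = \Phi p - p\Psi$. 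Moreover $p = P|_{\text{(zero Fourier mode)}}$ ... actually the theorem gives $P = p$ (independent of $v$, by the structure of the proof) — more precisely one gets $U = \id + p =: U$ a function on $M$.

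Finally, from $A - B = dp + Ap - pB$ one rewrites $B = (\id+p)^{-1} d(\id+p) + (\id+p)^{-1} A (\id+p)$ provided $\id + p$ is invertible, and similarly $\Psi = (\id+p)^{-1}\Phi(\id+p)$; setting $U := \id+p$ gives $B = U^{-1}dU + U^{-1}AU$, $\Psi = U^{-1}\Phi U$, with $U|_{\partial M}=\id$. Two points need care, and these are the main obstacles. First, invertibility of $U = \id + p$: one should argue directly that $U = U_-^{A,\Phi}(U_-^{B,\Psi})^{-1}$ is already known to be an invertible matrix (indeed unitary, since in the skew-Hermitian case $U_-^{A,\Phi}$ and $U_-^{B,\Psi}$ take values in $U(n)$), so once we know it descends to $M$ there is nothing to prove — this is why it is cleaner to phrase the conclusion in terms of $U$ directly rather than $p$. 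Second, and more delicate: justifying that the solution $P$ of the transport equation is genuinely independent of $v$, equivalently that the $p$ produced by Theorem~\ref{thm:injective_higgs} equals $P$ on the nose. This follows because the theorem's proof produces $p$ as (essentially) the solution of the \emph{same} transport equation $Xp + \hat A p + \hat\Phi p = -f_0$ with $p|_{\partial_-(SM)}=0$ that is independent of $v$, and such a solution is unique; hence $P = p$. I expect the write-up to invoke uniqueness of solutions to the transport ODE along each geodesic to pin this down. Unitarity of $U$ then guarantees $U : M \to U(n)$, completing the proof.
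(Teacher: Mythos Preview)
Your proposal is correct and follows essentially the same route as the paper: form $U=U_-^{A,\Phi}(U_-^{B,\Psi})^{-1}$, derive the transport equation for $W=U-\id$ with $W|_{\partial(SM)}=0$, reinterpret it as $I_{\hat A,\hat\Phi}\big((A-B)+(\Phi-\Psi)\big)=0$ for the induced Hermitian connection $\hat A(R)=AR-RB$ and skew-Hermitian Higgs field $\hat\Phi(R)=\Phi R-R\Psi$ on $M\times\C^{n\times n}$, apply Theorem~\ref{thm:injective_higgs}, and conclude $W=p$ by uniqueness of the transport ODE. The only cosmetic difference is that the paper works with the ``$+$'' solutions (boundary condition on $\partial_-(SM)$), and your two points of care---invertibility/unitarity of $U$ and the identification $P=p$---are handled exactly as you suggest.
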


Various versions of Theorem \ref{thm:inverse} have
been proved in the literature, often for the case $\Phi = 0$. Sharafutdinov \cite{Sha} proves the theorem
assuming that the connections are $C^1$ close to another connection with small curvature (but in any dimension).
In the case of domains in the Euclidean plane the theorem was proved by Finch and Uhlmann \cite{FU}
assuming that the connections have small curvature and by
G. Eskin \cite{E} in general.
R. Novikov \cite{No} considers the case of connections which
are not compactly supported (but with suitable decay conditions at infinity) and establishes local uniqueness of the trivial connection and gives examples in which global uniqueness fails (existence of ``ghosts"). Global uniqueness also fails in the case
of closed manifolds and a complete description of the ghosts (``transparent connections and pairs'') appears in \cite{P,P1,P2} for the case of negatively curved surfaces and structure group $SU(2)$. A full classification of $U(n)$ transparent connections for the round metric
on $S^2$ has been obtained by L. Mason (unpublished) using methods from twistor theory as in \cite{Ma}.

There is a remarkable connection between 
the Bogomolny equation and the scattering data associated with the
transport equation considered in this paper. As it is explained in \cite{W}
(see also \cite[Section 8.2.1]{D}), certain soliton pairs $(A,\Phi)$
have the property that when restricted to space-like planes the scattering data is trivial. These are precisely the examples
considered by R. Novikov \cite{No} and mentioned above. In this way one obtains connections and Higgs fields in $\mathbb R^2$ with the property
of having trivial scattering data but which are not gauge equivalent to the trivial pair.
Of course these pairs are not compactly supported in $\mathbb R^2$ but they have a suitable decay at infinity.   

We also point out that
the problem of determining a connection from the scattering relation arises naturally when considering the hyperbolic Dirichlet-to-Neumann map associated to the Schr\"odinger equation with a connection. It was shown in \cite{FU} that when the metric is Euclidean, the scattering relation for a connection $A$ (with zero Higgs field) can be determined from the hyperbolic Dirichlet-to-Neumann map. A similar result holds true on Riemannian manifolds: a combination of the methods in \cite{FU} and \cite{U2} shows that the hyperbolic Dirichlet-to-Neumann map for a connection determines the scattering data $C^{A,0}_-$. We do not consider the details here.

A brief summary of the rest of the paper is as follows: in Section \ref{sec:prelim} we review some preliminary material and we establish a commmutator formula between the fibrewise Hilbert transform and the operator $X+A+\Phi$. In Section \ref{scatt} we discuss in more detail the attenuated ray transform, we define the scattering data and show its gauge equivalence. In Section \ref{sec:hol} we consider the scalar case.
We prove that the transport equation in this case has holomorphic and antiholomorphic (in the fiber variable) integrating factors. A special case of this result was considered in \cite{SaU}.  This result will be used in the subsequent sections. In Section \ref{sec:regularity} we study the regularity of solutions of the transport equation. In Section \ref{sec:injective} we show injectivity of the attenuated transform on simple surfaces when the Higgs field vanishes using (what appears to be) a new Pestov type identity (or Energy identity). Using the existence of holomorphic integrating factors we are able to modify the connection to control the relevant curvature term appearing
in the Pestov type identity. The ideas employed here resemble the familiar techniques in Complex Geometry that establish results
like the Kodaira vanishing theorem.
In Section \ref{sec:injective_higgs} we prove injectivity in the general case when there is a Higgs field present. This requires an additional analysis since new terms need to be controlled in the Pestov identity. Finally, in Section \ref{sec:scattering_relation} we prove that the scattering relation determines the connection and the Higgs field modulo a gauge transformation.

\subsection*{Acknowledgements}

M.S. was supported in part by the Academy of Finland, and G.U. was partly supported by NSF, a Senior Clay Award and a Chancellor Professorship at UC Berkeley. This work was initiated during the MSRI program on Inverse problems and applications, and the authors would like to express their gratitude to MSRI and the organizers of the program. The last two authors would also like to thank the organizers of the Trimester on inverse problems in Madrid (project CSD2006-00032 on Inverse problems and scattering), where part of the work was done. Finally, the authors would like to thank Colin Guillarmou and Leo Tzou for several helpful discussions on the topic.

\section{Preliminaries} \label{sec:prelim}

Let $(M,g)$ be a compact oriented two dimensional Riemannian manifold with smooth boundary
$\partial M$. As usual $SM$ will denote the unit circle bundle which is a compact 3-manifold with boundary given by $\partial(SM)=\{(x,v)\in SM:\;x\in \partial M\}$. The outer unit normal of $\partial M$ is denoted by $\nu$ and the sets of inner and outer vectors on $\partial M$ are given by
\[\partial_{\pm}(SM)=\{(x,v)\in SM:\;x\in\partial M,\;\pm\langle v,\nu\rangle\leq 0\}.\]
The nonnegative time when the geodesic determined by $(x,v)$ exits $M$ is denoted by $\tau(x,v)$. The manifold is said to be \emph{nontrapping} if $\tau(x,v)$ is finite for any $(x,v)\in SM$. The boundary of $M$ is said to be \emph{strictly convex} if its second fundamental form is positive definite. Recall that given $x\in \partial M$, the second fundamental form of the boundary at $x$ is the quadratic form $\Pi_{x}:T_{x}\partial M\times T_{x}\partial M\to \mathbb {R}$ defined by $\Pi_{x}(w,w)=\langle \nabla_{w}\nu,w\rangle$. Hence we require that $\Pi_{x}$ is positive
definite for all $x\in\partial M$.
 The manifold is said to be \emph{simple} if it is compact with smooth strictly convex boundary, and the exponential map at any point is a diffeomorphism onto $M$.

If $(M,g)$ is nontrapping and has strictly convex boundary then $\tau$ is continuous on $SM$ and smooth in $SM\setminus S(\partial M)$, see \cite[Section 4.1]{Sh}. Moreover the function $\tau_{-}:\partial(SM)\to\mathbb R$  defined by
\[\tau_{-}(x,v)=\left\{\begin{array}{ll}
\tau(x,v)/2,\;\;\;\;\;\;\;\;\;(x,v)\in \partial_{+}(SM)\\
-\tau(x,-v)/2,\;\;\;(x,v)\in\partial_{-}(SM)\\
\end{array}\right.\]
is smooth.
Recall that the geodesic flow $\varphi_{t}$ is defined as follows: given $(x,v)\in SM$ we let
$t\mapsto \gamma(t,x,v)$ be the geodesic starting from $x$ in the direction of $v$; then $\varphi_{t}(x,v)=(\gamma(t,x,v),\dot{\gamma}(t,x,v))$.
We also define the scattering relation $\alpha:\partial(SM)\to\partial(SM)$ by 
\[\alpha(x,v)=\varphi_{2\tau_{-}(x,v)}(x,v).\]
Then $\alpha$ is a diffeomorphism and $\alpha^2=\id$.

Let $X$ denote the vector field associated with the geodesic flow.
Since $M$ is assumed oriented there is a circle action on the fibers of $SM$ with infinitesimal generator $V$ called the {\it vertical vector field}. It is possible to complete the pair $X,V$ to a global frame
of $T(SM)$ by considering the vector field $X_{\perp}:=[X,V]$, where $[\cdot,\cdot]$ stands for the Lie bracket or commutator of two vector fields. There are two additional structure equations given by $X=[V,X_{\perp}]$ and $[X,X_{\perp}]=-KV$
where $K$ is the Gaussian curvature of the surface. Using this frame we can define a Riemannian metric on $SM$ by declaring $\{X,X_{\perp},V\}$ to be an orthonormal basis and the volume form of this metric will be denoted by $d\Sigma^3$ and referred to as the {\it usual} volume form on $SM$. The fact that $\{ X, X_{\perp}, V \}$ are orthonormal together with the commutator formulas implies that the Lie derivative of $d\Sigma^3$ along the three vector fields vanishes, and consequently these vector fields are volume preserving.

Given functions $u,v:SM\to \C^n$ we consider the
inner product
\[\langle u,v \rangle =\int_{SM}\langle u,v\rangle_{\C^n}\,d\Sigma^3.\]
The space $L^{2}(SM,\C^n)$ decomposes orthogonally
as a direct sum
\[L^{2}(SM,\C^n)=\bigoplus_{k\in\mathbb Z}H_{k}\]
where $-iV$ acts as $k\,\mbox{\rm Id}$ on $H_k$.
Following Guillemin and Kazhdan in \cite{GK} we introduce the following
first order elliptic operators 
$$\eta_{+},\eta_{-}:C^{\infty}(SM,\C^n)\to
C^{\infty}(SM,\C^n)$$ given by
\[\eta_{+}:=(X+iX_{\perp})/2,\;\;\;\;\;\;\eta_{-}:=(X-iX_{\perp})/2.\]
Clearly $X=\eta_{+}+\eta_{-}$. Let $\Omega_{k}:=C^{\infty}(SM,\C^n)\cap H_{k}$. The commutation relations $[-iV,\eta_{+}]=\eta_{+}$ and
$[-iV,\eta_{-}]=-\eta_{-}$ imply that
\[\eta_{+}:\Omega_{k}\to \Omega_{k+1},\;\;\;\;\eta_{-}:\Omega_{k}\to \Omega_{k-1}.\]
These operators will be particularly useful when a Higgs field is introduced.

A smooth function $u:SM\to\C^n$ has a Fourier series expansion
\[u=\sum_{k=-\infty}^{\infty}u_{k},\]
where $u_{k}\in\Omega_k$.
Locally we can always choose isothermal coordinates $(x_{1},x_{2})$ so that the metric
can be written as $ds^2=e^{2\lambda}(dx_{1}^2+dx_{2}^2)$ where $\lambda$ is a smooth
real-valued function of $x=(x_{1},x_{2})$. This gives coordinates $(x_{1},x_{2},\theta)$ on $SM$ where
$\theta$ is the angle between a unit vector $v$ and $\partial/\partial x_{1}$.
In these coordinates we may write $V=\partial/\partial\theta$ and
$u_{k}=\tilde{u}_{k}(x)e^{ik\theta}$. For completeness we include formulas for $X$ and $X_{\perp}$ even though these
will not be needed in the sequel:
\[X=e^{-\lambda}\left(\cos\theta\frac{\partial}{\partial x_{1}}+
\sin\theta\frac{\partial}{\partial x_{2}}+
\left(-\frac{\partial \lambda}{\partial x_{1}}\sin\theta+\frac{\partial\lambda}{\partial x_{2}}\cos\theta\right)\frac{\partial}{\partial \theta}\right);\]
\[X_{\perp}=-e^{-\lambda}\left(-\sin\theta\frac{\partial}{\partial x_{1}}+
\cos\theta\frac{\partial}{\partial x_{2}}-
\left(\frac{\partial \lambda}{\partial x_{1}}\cos\theta+\frac{\partial \lambda}{\partial x_{2}}\sin\theta\right)\frac{\partial}{\partial \theta}\right).\]

Another important ingredient in our approach is the fibrewise {\it Hilbert transform} $\H$.
This can be introduced in various ways (cf.~\cite{PU,SaU}), but perhaps the most informative approach is to indicate that it acts fibrewise and
 for $u_{k}\in \Omega_k$,
\[\H(u_{k})=-\mbox{\rm sgn}(k)\,iu_{k}\]
where we use the convention $\mbox{\rm sgn}(0)=0$.
Moreover, $\H(u)=\sum_{k}\H(u_{k})$.
Observe that
\[(\id+i\H)u=u_0+2\sum_{k=1}^{\infty}u_{k},\]
\[(\id-i\H)u=u_0+2\sum_{k=-\infty}^{-1}u_{k}.\]

\begin{Definition} A function $u:SM\to\C^n$ is said to be holomorphic if
$(\id-i\H)u=u_{0}$. Equivalently, $u$ is holomorphic if $u_{k}=0$ for all $k<0$. Similarly, $u$ is said to be antiholomorphic if $(\id+i\H)u=u_{0}$ which
is equivalent to saying that $u_{k}=0$ for all $k>0$.
\end{Definition}

As in previous works (cf. \cite{PU,SaU}) the following commutator formula of Pestov-Uhlmann \cite{PU} will play a key role:
\begin{equation}
[\H,X] u = X_{\perp} u_0 + (X_{\perp} u)_0, \quad u \in C^{\infty}(SM,\C^n).
\label{eq:puf}
\end{equation}

We will extend this bracket relation so that it includes a connection $A$ and
a Higgs field $\Phi$.
We often think of $A$ as a function restricted
to $SM$. We also think of $A$ as acting on smooth functions
$u\in C^{\infty}(SM,\C^n)$ by multiplication. Note that $V(A)$ is a new
function on $SM$ which can be identified with the restriction of
$-\star A$ to $SM$, so we will simply write $V(A)=-\star A$. Here $\star$ denotes the Hodge star operator of the metric $g$. We now show:

\begin{Lemma} For any smooth function $u$ we have
\[[\H,X+A+\Phi]u=(\X+\star A)(u_{0})+\{(\X+\star A)(u)\}_{0}.\]
\label{lemma:bracket}
\end{Lemma}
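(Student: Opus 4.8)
The plan is to reduce the statement to the Pestov--Uhlmann commutator formula \eqref{eq:puf} for $\H$ and $X$, which is already available, and then handle the two zeroth-order terms $A$ and $\Phi$ separately. Write
\[
[\H,X+A+\Phi]u = [\H,X]u + [\H,A]u + [\H,\Phi]u.
\]
The first bracket is exactly $X_{\perp}u_0 + (X_{\perp}u)_0$ by \eqref{eq:puf}. So everything comes down to computing $[\H,A]u$ and $[\H,\Phi]u$, and the claim to prove is that these sum to $\star A\,(u_0) + \{(\star A)(u)\}_0$, i.e. that the $\Phi$-contribution vanishes and the $A$-contribution contributes precisely the $\star A$ terms.

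First I would treat the Higgs field $\Phi$. Since $\Phi = \Phi(x)$ depends only on the base point and not on the fiber variable $v$, multiplication by $\Phi$ preserves each $H_k$: if $u = \sum_k u_k$ with $u_k\in\Omega_k$, then $\Phi u = \sum_k \Phi u_k$ and $\Phi u_k \in \Omega_k$ still (writing $u_k = \tilde u_k(x)e^{ik\theta}$ in isothermal coordinates, $\Phi u_k = (\Phi(x)\tilde u_k(x))e^{ik\theta}$). Because $\H$ acts on $\Omega_k$ as the scalar $-\mathrm{sgn}(k)i$, it commutes with any fibre-degree-preserving multiplication, so $\H(\Phi u) = \Phi \H(u)$ and hence $[\H,\Phi]u = 0$. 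This is the easy part.

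Next I would treat the connection term $A$. Here $A = A(x,v)$ is linear in $v$, so multiplication by $A$ is \emph{not} degree-preserving: in isothermal coordinates, $A(x,v)$ restricted to $SM$ is a combination of $e^{\pm i\theta}$ times matrix functions of $x$, i.e. $A$ has Fourier components only in degrees $\pm 1$. Thus multiplication by $A$ shifts Fourier degree by $\pm 1$, exactly like the first-order part of $X$ (indeed $X = \eta_+ + \eta_-$ shifts by $\pm 1$ as well). The cleanest route is to mimic the proof of \eqref{eq:puf}: since $A$ is a degree $(\pm1)$ multiplier, one has, for a single component $u_k$, that $A u_k$ lives in $\Omega_{k+1}\oplus\Omega_{k-1}$, and a direct computation with the signs shows $\H(Au_k) - A\H(u_k)$ picks up exactly a boundary-of-the-sign-function contribution concentrated where the degree crosses $0$. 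Alternatively, and perhaps more transparently, decompose $A = A_{-1} + A_{1}$ where $A_{\pm 1}$ is the degree $\pm1$ part, and compute $[\H, A_{\pm1}]$ on $\Omega_k$ explicitly using $\H|_{\Omega_j} = -\mathrm{sgn}(j)i$; the commutator vanishes unless $k \in \{0,-1\}$ for $A_1$ (resp. $k\in\{0,1\}$ for $A_{-1}$), and summing these special contributions reproduces a term of the form $B(u_0) + \{B(u)\}_0$ for some degree $(\pm1)$ multiplier $B$. Identifying $B$ requires recognizing that the relevant combination of $A_1$ and $A_{-1}$ is precisely $iX_\perp$-type twisting, which by the identity $V(A) = -\star A$ (stated just before the lemma) equals $\star A$. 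Concretely, $\star A$ corresponds to rotating the $1$-form $A$ by $90^\circ$, which in Fourier-degree language is multiplication of $A_{\pm1}$ by $\mp i$, and this is exactly the adjustment that appears when comparing the $X_\perp$ (rotated) frame to the $X$ frame.

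The main obstacle is bookkeeping the signs and Fourier degrees correctly in the computation of $[\H, A]$ — in particular verifying that the output is $(\X + \star A)(u_0) + \{(\X+\star A)(u)\}_0$ rather than some other combination, and confirming the coefficient of $\star A$ is $+1$ with the paper's conventions ($\mathrm{sgn}(0)=0$, $\H(u_k) = -\mathrm{sgn}(k)iu_k$, $V = \partial/\partial\theta$, $V(A) = -\star A$). I would pin this down by testing on $u \in \Omega_0$ and on $u\in\Omega_1$ (the only degrees where something nontrivial happens), where the computation collapses to a couple of terms, and then invoke linearity and the support-in-degrees-$\{0,\pm1\}$ structure to conclude in general. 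The algebraic skeleton is identical to the original Pestov--Uhlmann argument, with $A$ playing the role of a zeroth-order (in derivatives) but degree-$(\pm1)$ perturbation of $X$, and $\Phi$ contributing nothing.
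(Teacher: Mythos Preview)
Your proposal is correct and follows essentially the same route as the paper: reduce to \eqref{eq:puf}, observe $[\H,\Phi]=0$ since $\Phi$ is fibre-degree-preserving, decompose $A=A_{-1}+A_{1}$, compute $[\H,A]$ on each $\Omega_k$ via $\H|_{\Omega_j}=-\mathrm{sgn}(j)i$, and identify the surviving terms using $\star A=-V(A)=iA_{-1}-iA_{1}$. The only minor slip is that the nontrivial degrees are $k\in\{-1,0,1\}$, not just $\{0,1\}$ as you write at the end (you have it right earlier in your case analysis of $[\H,A_{\pm1}]$); the paper checks $k=0,\pm1$ explicitly and notes both sides vanish for $|k|\ge2$.
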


\begin{proof} In view of (\ref{eq:puf}) and $[\H,\Phi]=0$ we only need to prove that
\begin{equation}
[\H,A]u=\star A u_{0}+(\star A u)_{0}.
\label{eq:Ab}
\end{equation}
Expanding $u$ in Fourier series we see that we only need to check the formula for functions $u_k\in\Omega_k$.
In this case
\[\H(u_k)=-\mbox{\rm sgn}(k)iu_k,\]
where as above we use the convention $\mbox{\rm sgn}(0)=0$.
Since $A$ is a 1-form its Fourier expansion is given by:
\[A=A_{-1}+A_{1}.\]
We compute:
\begin{align*}
[\H,A]u_k&=\H(A_{-1}u_{k}+A_{1}u_k)-A(-\mbox{\rm sgn}(k)iu_k)\\
&=-\mbox{\rm sgn}(k-1)iA_{-1}u_{k}-\mbox{\rm sgn}(k+1)iA_{1}u_{k}+A(\mbox{\rm sgn}(k)iu_{k})\\
&=iA_{-1}u_{k}(\mbox{\rm sgn}(k)-\mbox{\rm sgn}(k-1))+iA_{1}u_{k}(\mbox{\rm sgn}(k)-\mbox{\rm sgn}(k+1)).
\end{align*}
Let us analyze this expression for various values of $k$. If $|k|\geq 2$
we see that $[\H,A]u_k=0$ which proves (\ref{eq:Ab}) since
its right hand side clearly vanishes.
If $k=0$, $[\H,A]u_0=iA_{-1}u_0-iA_{1}u_0$.
On the other hand
\[\star A=-V(A)=iA_{-1}-iA_{1}.\]
Thus
\[[\H,A]u_{0}=\star A u_0+(\star A u_0)_{0}=\star A u_0\]
which proves (\ref{eq:Ab}) in this case.
If $k=1$, then $[\H,A]u_{1}=iA_{-1}u_1$.
But
\[(\star A u_{1})_{0}=iA_{-1}u_{1}\]
and $(u_{1})_{0}=0$ which also shows (\ref{eq:Ab}). The case $k=-1$
is checked in the same way.
\end{proof}

\section{The attenuated ray transform and scattering data}
\label{scatt}

We wish to give a precise definition of the attenuated geodesic ray transform and scattering data in the presence of a connection and a Higgs field. In this section we assume that $(M,g)$ is a nontrapping compact manifold, of dimension $d \geq 2$, with strictly convex boundary.

First recall that in the scalar case, the attenuated ray transform $I_a f$ of a function $f \in C^{\infty}(SM,\C)$ with attenuation coefficient $a \in C^{\infty}(SM,\C)$ can be defined as the integral 
$$
I_a f(x,v) := \int_0^{\tau(x,v)} f(\varphi_t(x,v)) \text{exp}\left[ \int_0^t a(\varphi_s(x,v)) \,ds \right] dt, \quad (x,v) \in \partial_+(SM).
$$
Alternatively, we may set $I_a f := u|_{\partial_+ (SM)}$ where $u$ is the unique solution of the transport equation 
$$
Xu + au = -f \ \ \text{in $SM$}, \quad u|_{\partial_-(SM)} = 0.
$$

The last definition generalizes without difficulty to the case of systems. Assume that  $\mathcal{A} \in C^{\infty}(SM,\C^{n \times n})$ is a matrix attenuation coefficient and let $f \in C^{\infty}(SM,\C^n)$ be a vector valued function. At this point both $\mathcal{A}$ and $f$ may be quite arbitrary, although in this paper we will mostly be interested in the special case 
$$
\mathcal{A}(x,v) = \Phi(x) + A_j(x) v^j, \quad f(x,v) = F(x) + \alpha_j(x) v^j.
$$
Here $A_j \,dx^j$ is a $\C^{n \times n}$-valued smooth $1$-form (called the \emph{connection}) and $\Phi$ is a $\C^{n \times n}$ valued smooth function on $M$ (called the \emph{Higgs field}), and $F \in C^{\infty}(M,\C^n)$ and $\alpha = \alpha_j \,dx^j$ is a  $\C^n$-valued $1$-form. Consider the following transport equation for $u: SM \to \C^n$, 
$$
Xu + \mathcal{A}u = -f \ \ \text{in $SM$}, \quad u|_{\partial_-(SM)} = 0.
$$
On a fixed geodesic the transport equation becomes a linear system of ODEs with zero initial condition, and therefore this equation has a unique solution $u=u^f$.

\begin{Definition}
The geodesic ray transform of $f \in C^{\infty}(SM,\C^n)$ with attenuation $\mathcal{A} \in C^{\infty}(SM,\C^{n \times n})$ is given by 
$$
I_{\mathcal{A}} f := u^f|_{\partial_+(SM)}.
$$
In the special case $\mathcal{A}(x,v) = A_j(x) v^j + \Phi(x)$ we write 
$$
I_{A,\Phi} f := I_{\mathcal{A}} f.
$$
\end{Definition}

We note that $I_{A,\Phi}$ acting on sums of $0$-forms and $1$-forms always has a nontrivial kernel, since 
$$
I_{A,\Phi}((X+A+\Phi)p) = 0 \text{ for any $p \in C^{\infty}(M,\C^n)$ with $p|_{\partial M} = 0$}.
$$
Thus from the ray transform $I_{A,\Phi} f$ one only expects to recover $f$ up to an element having this form.

The transform $I_{\mathcal{A}}$ also has an integral representation. Consider the unique matrix solution
$U_{-}:SM\to GL(n,\C)$ to the transport equation:
\[\left\{\begin{array}{ll}
XU_{-}+\mathcal{A}U_{-}=0,\\
U_{-}|_{\partial_{+}(SM)}=\id.\\
\end{array}\right.\]
Similarly, we can consider the unique matrix solution of 
\[\left\{\begin{array}{ll}
XU_{+}+\mathcal{A}U_{+}=0,\\
U_{+}|_{\partial_{-}(SM)}=\id.\\
\end{array}\right.\]
Note that $U_{-}^{-1}$ solves $X(U_{-}^{-1})-U_{-}^{-1}\mathcal{A}=0$. This implies that 
\[X(U_{-}^{-1}u^f)=X(U_{-}^{-1})u^f+U_{-}^{-1}X(u^f)
=U_{-}^{-1}(\mathcal{A}u^f-f-\mathcal{A}u^f)=-U_{-}^{-1}f.\]
Therefore integrating between $0$ and $\tau(x,v)$ for $(x,v)\in \partial_{+}(SM)$ we derive the integral representation 
$$
I_{\mathcal{A}} f(x,v) = \int_{0}^{\tau(x,v)} U_{-}^{-1}(\varphi_{t}(x,v))f(\varphi_{t}(x,v))\,dt.
$$

Recall that in the nontrapping manifold $(M,g)$ we have the scattering relation $\alpha:\partial (SM)\to \partial (SM)$, mapping an initial point and direction $(x,v)$ of a geodesic to the end point and direction. Given a matrix attenuation coefficient $\mathcal{A} \in C^{\infty}(SM,\C^{n \times n})$ we also have a corresponding parallel transport type operator, which takes a vector $h(0) \in \C^n$ to the vector $h(\tau(x,v)) \in \C^n$ where 
$$
h'(t) + \mathcal{A}(\varphi_t(x,v)) h(t) = 0 \ \ \text{for $t \in [0,\tau(x,v)]$}.
$$
It is easy to see that $h(\tau(x,v))$ is the same as $U_-(\alpha(x,v)) h(0)$, and so the operator $h(0) \mapsto h(\tau(x,v))$ for $(x,v) \in \partial_+(SM)$ is completely determined by $U_-|_{\partial_-(SM)}$ and $\alpha$.

\begin{Definition}
The scattering data corresponding to a matrix attenuation coefficient $\mathcal{A}$ in $(M,g)$ is the map 
$$
C^{\mathcal{A}}_{-}:\partial_{-}(SM)\to GL(n,\C), \ \  C^{\mathcal{A}}_{-} := U_{-}|_{\partial_{-}(SM)}.
$$
In the special case $\mathcal{A}(x,v) = A_j(x) v^j + \Phi(x)$ we write 
$$
C^{A,\Phi}_{-} := C^{\mathcal{A}}_{-}.
$$
\end{Definition}

There is also a corresponding map $C^{\mathcal{A}}_{+}:\partial_{+}(SM)\to GL(n,\C), \  C^{\mathcal{A}}_{+}:=U_{+}|_{\partial_{+}(SM)}$. However, one has 
\[X(U_{-}^{-1}U_{+})=X(U_{-}^{-1})U_{+}+U_{-}^{-1}X(U_{+})=U_{-}^{-1} \mathcal{A} U_{+}-U_{-}^{-1} \mathcal{A} U_{+}=0.\]
Thus
\[U_{-}^{-1}U_{+}|_{\partial_{-}(SM)}=(U_{-}^{-1}U_{+})\circ\alpha |_{\partial_{-}(SM)}\]
which gives
$$
(C^{\mathcal{A}}_{-})^{-1}=C^{\mathcal{A}}_{+}\circ\alpha.
$$
Consequently $C^{\mathcal{A}}_{+}$ is uniquely determined by $C^{\mathcal{A}}_{-}$ and $\alpha$.

Finally we discuss gauge invariance of the transport equation in the case where $\mathcal{A}(x,v) = A_j(x)v^j + \Phi(x)$. Suppose $u = u^f$ solves 
$$
(X+A+\Phi)u = -f \ \ \text{in $SM$}, \quad u|_{\partial_-(SM)} = 0.
$$
If $Q: M \to GL(n,\C)$ is a smooth function taking values in invertible matrices, then $Q^{-1} u$ satisfies 
$$
X(Q^{-1} u) = Q^{-1}(Xu - (XQ)Q^{-1} u) = -Q^{-1}(Au + \Phi u + f + (XQ)Q^{-1} u)
$$
and consequently 
$$
(X + Q^{-1}AQ + Q^{-1}XQ + Q^{-1} \Phi Q)(Q^{-1} u) = -Q^{-1} f.
$$
Therefore $Q^{-1} u$ satisfies a transport equation with new connection $Q^{-1}(X+A)Q$ and new Higgs field $Q^{-1} \Phi Q$. It also follows that 
$$
I_{Q^{-1}(X+A)Q,Q^{-1} \Phi Q}(Q^{-1} f) = Q^{-1}|_{\partial_+(SM)} I_{A,\Phi} f.
$$
Thus, when proving injectivity results for $I_{A,\Phi}$ we have the freedom to change the connection and Higgs field into gauge equivalent ones. This idea will be useful in our proof of injectivity on compact simple surfaces, where the connection will be gauge transformed to one whose curvature has a definite sign (however the gauge transform will be more general and $Q$ will depend on direction as well).

As for the scattering data, if $U_-$ solves $(X+A+\Phi)U_- = 0$ in $SM$ with $U_{-}|_{\partial_+(SM)} = \id$, then $Q^{-1}U_-$ satisfies 
$$
(X+Q^{-1}(X+A)Q + Q^{-1} \Phi Q)(Q^{-1}U_-) = 0 \ \ \text{in $SM$}, \ \ \ Q^{-1} U_-|_{\partial_+(SM)} = Q^{-1}|_{\partial_+(SM)}.
$$
The scattering data has therefore the gauge invariance 
$$
C^{Q^{-1}(X+A)Q,Q^{-1} \Phi Q}_- = C^{A,\Phi}_- \quad \text{if $Q \in C^{\infty}(M,GL(n,\C))$ satisfies $Q|_{\partial M} = \id$}.
$$
It follows that from the knowledge of $C^{A,\Phi}_{-}$ one can only expect to recover $A$ and $\Phi$ up to a gauge transform via $Q$ which satisfies $Q|_{\partial M} = \id$.

\section{Holomorphic integrating factors in the scalar case}
\label{sec:hol}

Let $(M,g)$ be a 2D simple manifold. In this section we will prove the existence of holomorphic and antiholomorphic integrating factors for the equation 
$$
Xu + \mathcal{A} u = -f \quad \text{ in $SM$}
$$
in the scalar case where $n=1$. We assume that $\mathcal{A}(x,v) = \Phi(x) + \alpha_j(x)v^j$ where $\Phi$ is a smooth complex valued function on $M$ and $\alpha$ is a complex $1$-form. The discussion is parallel to \cite{SaU} which considered the case $\alpha = 0$. All functions in this section will be scalar and complex valued.

By a holomorphic (resp.~antiholomorphic) integrating factor we mean a function $e^{-w}$, where $w \in C^{\infty}(SM)$ is holomorphic (resp.~antiholomorphic) in the angular variable, such that for all $r\in C^{\infty}(SM)$
$$
e^w X(e^{-w} r) = Xr + \mathcal{A} r \quad \text{in $SM$}.
$$
This is equivalent with the equation 
$$
Xw = -\mathcal{A} \quad \text{in $SM$}.
$$
The main result of this section shows that when $\mathcal{A}$ is the sum of a function and $1$-form, holomorphic and antiholomorphic integrating factors always exist.

\begin{Theorem} \label{prop_holomorphic_integrating_factors}
Let $(M,g)$ be a simple two-dimensional manifold and  $f \in C^{\infty}(SM)$. The following conditions are equivalent.
\begin{enumerate}
\item[(a)]
There exist a holomorphic $w \in C^{\infty}(SM)$ and antiholomorphic $\tilde{w} \in C^{\infty}(SM)$ such that $Xw = X\tilde{w} = -f$.
\item[(b)]
$f(x,v) = F(x) + \alpha_j(x) v^j$ where $F$ is a smooth function on $M$ and $\alpha$ is a $1$-form.
\end{enumerate}
Furthermore, the following conditions are equivalent.
\begin{enumerate}
\item[($\alpha$)]
There is a holomorphic (resp.~antiholomorphic) $w \in C^{\infty}(SM)$ with $Xw = -f$.
\item[($\beta$)]
$f = f' + f''$ where $f' \in C^{\infty}(SM)$ is holomorphic (resp.~antiholomorphic) and $f''(x,v) = \alpha_j(x)v^j$ for some $1$-form $\alpha$.
\end{enumerate}
\end{Theorem}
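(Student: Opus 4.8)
The plan is to strip off the elementary implications by Fourier analysis, reduce everything to the single existence statement ``($\beta$)$\Rightarrow$($\alpha$) for holomorphic $w$'', and then prove that statement by solving the transport equation mode by mode.

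\emph{Reductions.} Since $X$ is a real vector field, complex conjugation commutes with $X$, interchanges holomorphic and antiholomorphic functions, and preserves the classes of $1$-forms and of functions $F(x)+\alpha_j(x)v^j$; hence the ``antiholomorphic'' forms of the asserted equivalences follow from the ``holomorphic'' ones. For ($\alpha$)$\Rightarrow$($\beta$): if $w=\sum_{k\ge0}w_k$ is holomorphic with $Xw=-f$, then, using $X=\eta_++\eta_-$ with $\eta_\pm:\Omega_k\to\Omega_{k\pm1}$, we get $f_k=-\eta_+w_{k-1}-\eta_-w_{k+1}=0$ for every $k\le-2$, so $f=(f-f_{-1})+f_{-1}$ with $f-f_{-1}$ holomorphic and $f_{-1}\in\Omega_{-1}$; since a function concentrated in a single Fourier mode of degree $\pm1$ is the restriction to $SM$ of a $1$-form, this is precisely ($\beta$). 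The same computation with both signs shows that $Xw=-f$ for a holomorphic $w$ and an antiholomorphic $\tilde w$ forces $f_k=0$ for all $|k|\ge2$, that is, $f=F(x)+\alpha_j(x)v^j$, which is (b). Conversely, once the holomorphic ($\beta$)$\Rightarrow$($\alpha$) and, by conjugation, its antiholomorphic counterpart are known: condition (b) gives $f=(f-f_{-1})+f_{-1}=(f-f_1)+f_1$, exhibiting $f$ both as holomorphic plus $1$-form and as antiholomorphic plus $1$-form, so there exist a holomorphic $w$ and an antiholomorphic $\tilde w$ with $Xw=X\tilde w=-f$, which is (a). It therefore remains to prove: if $f=f'+f''$ with $f'$ holomorphic and $f''$ a $1$-form, then there is a holomorphic $w\in C^\infty(SM)$ with $Xw=-f$.

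\emph{Construction of $w$.} Such an $f$ satisfies $f_k=0$ for $k\le-2$, so we seek $w=\sum_{k\ge0}w_k$ with $(Xw)_k=-f_k$ for $k\ge-1$ (the remaining equations being automatic, as $Xw$ has no modes below degree $-1$). These read $\eta_-w_0=-f_{-1}$, $\eta_-w_1=-f_0$, and $\eta_-w_{k+1}=-f_k-\eta_+w_{k-1}$ for $k\ge1$, and we solve them recursively. Each step is possible because $\eta_-:\Omega_{k+1}\to\Omega_k$ is onto: since $M$ is simple it carries global isothermal coordinates identifying it with a planar domain $D$, and a direct computation from the formulas for $X$ and $X_\perp$ shows that on $ge^{i(k+1)\theta}$ the operator $\eta_-$ acts as a conjugated $\bar\partial$, so $\eta_-w_{k+1}=h\,e^{ik\theta}$ reduces to a $\bar\partial$-equation on $D$, which always has a solution in $C^\infty(\bar D)$ via the solid Cauchy transform. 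This yields a formal series $w=\sum_{k\ge0}w_k$ that is holomorphic by construction and satisfies $Xw=-f$.

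\emph{Main obstacle.} The crux is to show that this series converges in $C^\infty(SM)$. The recursion $w_{k+1}=\eta_-^{-1}\bigl(-f_k-\eta_+w_{k-1}\bigr)$ does not lose smoothness, since the first-order operator $\eta_+$ costs a derivative while inverting the elliptic operator $\eta_-$ gains it back; but iterating it expresses $w_k$ in terms of the modes $f_j$ measured in progressively higher $C^m$ norms, so one must control how the $C^m$-operator norms of the chosen Cauchy-transform solution operators grow with $k$ and play this against the superpolynomial decay of $\|f_k\|_{C^m}$ coming from $f\in C^\infty(SM)$. Carrying this estimate in a scale of $C^m$ (or Sobolev) norms uniformly enough in $k$ to sum the series is the technical heart of the argument; it parallels the corresponding step in \cite{SaU} for the case $\alpha=0$, the only new feature being that the degree $-1$ mode $f_{-1}$ now feeds into $w_0$, hence into the term $\eta_+w_0$ of the recursion. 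Once $C^\infty$-convergence is established, $w$ is the required holomorphic integrating factor and the theorem follows.
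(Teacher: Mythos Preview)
Your reductions are correct and match the paper's: the implications $(\alpha)\Rightarrow(\beta)$ and $(a)\Leftrightarrow(b)$ (given $(\alpha)\Leftrightarrow(\beta)$) follow by the Fourier argument you give, and the conjugation symmetry reduces the antiholomorphic case to the holomorphic one.

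The substantive step, $(\beta)\Rightarrow(\alpha)$, is where your proposal diverges from the paper and where it has a genuine gap. The paper does \emph{not} solve mode by mode. Instead it writes any candidate holomorphic solution as $w=(\id+i\H)\hat w$, computes $Xw$ via the commutator formula $[\H,X]u=X_\perp u_0+(X_\perp u)_0$, and reduces the problem to producing $\hat w$ of the form $u^{b_-}_++(h_\psi)_++u^{b_+}_-+(h'_\psi)_-$ for suitable $b$ and $h,h'\in C^\infty_\alpha(\partial_+(SM))$. After choosing $b=\tfrac12(f'+(f')_0)$, what remains is to hit a prescribed smooth function and a prescribed solenoidal $1$-form with the maps $h\mapsto(X_\perp h_\psi)_0$ and $h\mapsto X_\perp(h_\psi)_0$. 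Both surjectivity statements are deduced from the surjectivity of $I^*:C^\infty_\alpha(\partial_+(SM))\to C^\infty(M)$ proved in \cite{PU}; this is where simplicity of $(M,g)$ enters, via the ellipticity and injectivity of the normal operator $I^*I$. The argument in \cite{SaU} for $\alpha=0$ is the same in structure; it is \emph{not} a mode-by-mode construction.

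Your recursive scheme is formally correct and each $\eta_-$ equation is indeed solvable on a disk, but the convergence step you flag as the ``main obstacle'' is a real obstacle, not a routine estimate. The iteration $w_{k+1}=\eta_-^{-1}(-f_k-\eta_+w_{k-1})$ composes a right inverse of $\eta_-$ with $\eta_+$; this composite is a Beurling-type singular integral of order zero, so there is no gain per step, and for a generic choice of right inverse $\|w_k\|$ need not decay at all even when $f$ has finitely many modes (take $f=f_{-1}$: the recursion becomes $w_{2k}=(-\eta_-^{-1}\eta_+)^k w_0$). One must therefore exhibit a \emph{specific} right inverse and prove uniform-in-$k$ estimates strong enough to sum the series in $C^\infty$. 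No such estimate is provided, and your appeal to \cite{SaU} does not supply one since that paper uses the $I^*$-surjectivity route. As written, the proposal does not use simplicity in any essential way (global isothermal coordinates exist on any simply connected surface), which is another signal that the hard input is missing. The gap is exactly the absence of a mechanism replacing the $I^*$-surjectivity from \cite{PU}.
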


We begin with some regularity results which are valid on any nontrapping compact manifold $(M,g)$ with strictly convex boundary. If $f \in C^{\infty}(SM)$ define 
$$
u^f(x,v) = \int_0^{\tau(x,v)} f(\varphi_t(x,v)) \,dt, \quad (x,v) \in SM,
$$
where $\tau$ is the exit time of the geodesic determined by $(x,v)$. Note that $\tau$ is continuous on $SM$ and smooth in $SM \setminus S(\partial M)$, but may fail to be smooth in $SM$. The same properties are true for $u^f$. We also have that $\tau_-$ is smooth in $SM$. Recall the definition 
$$
C^{\infty}_{\alpha}(\partial_+ (SM)) = \{ h \in C^{\infty}(\partial_+(SM)) \,;\, h_{\psi} \in C^{\infty}(SM) \}
$$
where 
$$
h_{\psi}(x,v) = h(\varphi_{-\tau(x,-v)}(x,v)).
$$
If $u$ is a function on $SM$ we denote the even and odd parts with respect to $v$ by 
$$
u_+(x,v) = \frac{1}{2}(u(x,v) + u(x,-v)), \quad u_-(x,v) = \frac{1}{2}(u(x,v) - u(x,-v)).
$$

\begin{Lemma} \label{lemma_parity_smoothness}
Let $(M,g)$ be a nontrapping compact manifold with strictly convex boundary and let $b \in C^{\infty}(SM)$. If $b$ is even then $u^b_-$ is smooth in $SM$. Similarly, if $b$ is odd then $u^b_+$ is smooth in $SM$.
\end{Lemma}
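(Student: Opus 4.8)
\emph{Strategy.} The plan is to write the travel time globally on $SM$ as $\tau=m+G$, with $m$ and $G^{2}$ smooth and $G\geq 0$, so that the non‑smoothness of $\tau$ at glancing directions is confined to $G=\sqrt{G^{2}}$; then to split the defining integral of $u^{b}$ accordingly; and finally to use the reflection $(x,v)\mapsto(x,-v)$ together with the parity of $b$ to kill the $G$‑contribution.

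\emph{Step 1 (decomposing $\tau$).} Enlarge $(M,g)$ to a simple manifold $(M_{1},g)$ with $M\subset M_{1}^{\mathrm{int}}$, and fix a boundary defining function $\rho$ of $M$ (smooth on $M_{1}$, with $\rho>0$ on $M^{\mathrm{int}}$ and $\rho=0$, $d\rho\neq 0$ on $\partial M$). For $(x,v)\in SM$ the function $t\mapsto\rho(\gamma(t,x,v))$ is smooth, and $\gamma(t,x,v)\in M$ precisely for $t\in[-\tau(x,-v),\tau(x,v)]$, so its two zeros closest to $t=0$ are $\tau(x,v)\geq 0$ and $-\tau(x,-v)\leq 0$. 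Away from the glancing set these are simple zeros, smooth in $(x,v)$ by the implicit function theorem; at a glancing direction they collapse to a double zero at $t=0$, where strict convexity of $\partial M$ forces $\partial_{t}^{2}[\rho\circ\gamma]<0$, and the preparation theorem (cf. \cite[Section 4.1]{Sh}) yields a local factorization $\rho(\gamma(t,x,v))=e(t,x,v)\,(t^{2}+p_{1}(x,v)t+p_{0}(x,v))$ with $e$ and $p_{0},p_{1}$ smooth. In all cases
\[
m(x,v):=\tfrac12\big(\tau(x,v)-\tau(x,-v)\big),\qquad G(x,v):=\tfrac12\big(\tau(x,v)+\tau(x,-v)\big)\geq 0
\]
satisfy $\tau(x,v)=m(x,v)+G(x,v)$, and $m$ and $G^{2}$ are smooth on $SM$ (locally $m=-p_{1}/2$ and $G^{2}=(p_{1}^{2}-4p_{0})/4$). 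Moreover $m(x,-v)=-m(x,v)$ and $G(x,-v)=G(x,v)$.

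\emph{Step 2 (splitting the integral and using the reflection).} Since $m(x,v)\in[-\tau(x,-v),\tau(x,v)]$, the point $(y,w):=\varphi_{m(x,v)}(x,v)$ — the midpoint of the geodesic through $(x,v)$ — lies in $SM$ and depends smoothly on $(x,v)$, and
\[
u^{b}(x,v)=\int_{0}^{m(x,v)}b(\varphi_{t}(x,v))\,dt+\int_{0}^{G(x,v)}b(\varphi_{s}(y,w))\,ds ,
\]
where the first integral is smooth in $(x,v)$. Replacing $v$ by $-v$ and using $\varphi_{-m(x,v)}(x,-v)=(y,-w)$ together with $G(x,-v)=G(x,v)$, we get, with $R_{\mp}$ smooth,
\[
u^{b}(x,v)\mp u^{b}(x,-v)=R_{\mp}(x,v)+\int_{0}^{G(x,v)}\big(b(\varphi_{s}(y,w))\mp b(\varphi_{s}(y,-w))\big)\,ds .
\]
Since $\varphi_{s}(x,-v)=\iota\varphi_{-s}(x,v)$ for $\iota(x,v):=(x,-v)$, one has $b(\varphi_{s}(y,-w))=(b\circ\iota)(\varphi_{-s}(y,w))$, which equals $b(\varphi_{-s}(y,w))$ when $b$ is even and $-b(\varphi_{-s}(y,w))$ when $b$ is odd. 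Choosing the $-$ sign for $b$ even and the $+$ sign for $b$ odd, the integrand becomes $b(\varphi_{s}(y,w))-b(\varphi_{-s}(y,w))$, an odd function of $s$. For a smooth $h$ odd in $s$ one has $h(s)=s\,\widetilde{h}(s^{2})$ with $\widetilde{h}$ smooth, hence $\int_{0}^{G}h(s)\,ds=\tfrac12\int_{0}^{G^{2}}\widetilde{h}(u)\,du$ is smooth in $G^{2}$; since $G^{2}$ and $(y,w)$ are smooth in $(x,v)$, the displayed right‑hand side is smooth. Therefore $u^{b}_{-}=\tfrac12\big(u^{b}(x,v)-u^{b}(x,-v)\big)$ is smooth when $b$ is even, and $u^{b}_{+}=\tfrac12\big(u^{b}(x,v)+u^{b}(x,-v)\big)$ is smooth when $b$ is odd.

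\emph{Main obstacle.} The delicate point is Step 1 — that the elementary symmetric functions $m$ and $G^{2}$ of the forward and backward exit times extend smoothly across the glancing set. This is exactly where strict convexity of $\partial M$ enters (it makes $\rho\circ\gamma$ a non‑degenerate fold in $t$ at glancing directions); once this is granted, the remainder is bookkeeping with the geodesic flow and the parity of $b$.
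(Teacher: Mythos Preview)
Your proof is correct and takes a genuinely different route from the paper's. The paper argues by comparison: it extends $b$ (preserving parity) to a slightly larger nontrapping manifold $(\tilde{M},g)\supset M$, forms the analogue $\tilde{u}$ of $u^b$ using the exit time of $\tilde{M}$ (which is smooth on $SM$), observes that $X(\tilde{u}-u^b_{\pm})=0$ by the parity assumption so that $\tilde{u}-u^b_{\pm}=h_\psi$ for some boundary datum $h$, and then appeals to the characterization $C^\infty_\alpha(\partial_+(SM))=\{h:Eh\in C^\infty(\partial(SM))\}$ from \cite[Lemma~1.1]{PU}, checking smoothness of $Eh$ via an explicit formula in which only the smooth function $\tau_-$ appears. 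Your argument instead dissects $u^b$ directly: you resolve the singularity of $\tau$ at the glancing set through Malgrange preparation (writing $\tau=m+G$ with $m$ and $G^2$ smooth), translate to the geodesic midpoint $(y,w)=\varphi_m(x,v)$, and use the parity of $b$ together with the flip $\varphi_s(y,-w)=\iota\varphi_{-s}(y,w)$ to make the residual integrand odd in $s$, hence its integral a smooth function of $G^2$ by the Whitney even/odd lemma. Your approach is more self-contained---it avoids the \cite{PU} black box and makes the role of strict convexity at glancing completely explicit---while the paper's approach is shorter once that lemma is in hand and ties naturally into the space $C^\infty_\alpha$ used throughout. One small point you leave implicit: for the factorization $h(s)=s\,\tilde h(s^2)$ to hold smoothly in the parameters near the glancing set (where $G\to 0$ and the domain $|s|\le G$ collapses), one should extend $b$ smoothly to $SM_1$ so that $s\mapsto b(\varphi_s(y,w))$ is defined on an open neighborhood of $s=0$; since you already enlarged the manifold in Step~1, this costs nothing.
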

\begin{proof}
The case where $b$ is even was proved in \cite[Lemma 2.3]{SaU}, and the odd case follows by repeating that proof almost verbatim. For completeness we give the details. The proof is based on \cite[Lemma 1.1]{PU} which gives the characterization 
$$
C^{\infty}_{\alpha}(\partial_+(SM)) = \{ h \in C^{\infty}(\partial_+(SM)) \,;\, E h \in C^{\infty}(\partial(SM)) \}
$$
where $E$ is the operator of even continuation with respect to the scattering relation, 
$$
E h(x,v) = \left\{ \begin{array}{cl} h(x,v), & (x,v) \in \partial_+(SM), \\ h(\alpha(x,v)), & (x,v) \in \partial_-(SM). \end{array} \right.
$$

Let $(\tilde{M},g)$ be a nontrapping manifold with strictly convex boundary chosen so that $M \subseteq \tilde{M}^{\text{int}}$ (this can be achieved by embedding $(M,g)$ to a compact manifold $(S,g)$ without boundary and by looking at a small neighborhood of $M$ in $S$). If $\tilde{\tau}(x,v)$ is the exit time of geodesics in $(\tilde{M},g)$, we know that $\tilde{\tau}$ is smooth in $S(\tilde{M}^{\text{int}})$.

Extend $b$ as a smooth odd function into $S \tilde{M}$, and define 
\begin{equation*}
\tilde{u}(x,v) = \int_0^{\tilde{\tau}(x,v)} b(\varphi_t(x,v)) \,dt
\end{equation*}
where $\varphi_t$ is the geodesic flow in $(\tilde{M},g)$. Then $\tilde{u} \in C^{\infty}(SM)$ and $X \tilde{u} = -b$ in $SM$.

Let $h = (\tilde{u} - u^b_+)|_{\partial_+ (SM)}$. Since $X (\tilde{u} - u^b_+) = 0$ in $S(M^{\text{int}})$ (here we used that $b$ is odd) and $\tilde{u} - u^b_+$ is continuous in $SM$, we obtain $\tilde{u} - u^b_+ = h_{\psi}$. Thus, to show that $u^b_+$ is smooth in $SM$ it is enough to prove that $E h$ is in $C^{\infty}(\partial (SM))$. We have for $(x,v) \in \partial (SM)$ 
\begin{equation*}
E h(x,v) = \frac{1}{2} \int_0^{\tilde{\tau}(x,v)} b(\varphi_t(x,v)) \,dt + \frac{1}{2} \int_{2\tau_-(x,v)}^{\tilde{\tau}(x,v)} b(\varphi_t(x,v)) \,dt.
\end{equation*}
Here $2\tau_-$ is smooth in $\partial (SM)$ and so also $E h$ is smooth.
\end{proof}

The next step is an abstract characterization of those functions $f$ on $SM$ for which the equation $Xw = -f$ admits a holomorphic solution $w$.

\begin{Lemma} \label{lemma_abstract_characterization}
Let $(M,g)$ be a nontrapping compact surface with strictly convex boundary, and let $f \in C^{\infty}(SM)$. The following are equivalent.
\begin{enumerate}
\item[(1)]
There exists a holomorphic function $w \in C^{\infty}(SM)$ with $Xw = -f$.
\item[(2)]
There exist $b \in C^{\infty}(SM)$ and $h, h' \in C^{\infty}_{\alpha}(\partial_+(SM))$ such that 
$$
f = (\id + i\hilb)b + i X_{\perp} (u^{b_-})_0 + i X_{\perp} (h_{\psi})_0 + i(X_{\perp} u^{b_+})_0 + i(X_{\perp} h'_{\psi})_0.
$$
\end{enumerate}
\end{Lemma}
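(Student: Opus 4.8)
The plan is to reduce both implications to a single computation. For any $v\in C^\infty(SM)$ with $Xv=-b$, the Pestov--Uhlmann commutator formula (\ref{eq:puf}) gives $X\hilb v=\hilb Xv-[\hilb,X]v=-\hilb b-X_\perp v_0-(X_\perp v)_0$, hence
\[X\big((\id+i\hilb)v\big)=-(\id+i\hilb)b-iX_\perp v_0-i(X_\perp v)_0.\]
Since $(\id+i\hilb)v=v_0+2\sum_{k>0}v_k$ has no negative Fourier modes it is holomorphic, and it is smooth because $\hilb$ preserves $C^\infty(SM)$. Thus the lemma amounts to matching the right-hand side of this identity with the expression in (2). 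The only real input beyond bookkeeping is Lemma \ref{lemma_parity_smoothness}, which gives that $u^{b_-}_+$ and $u^{b_+}_-$ are smooth, with $Xu^{b_-}_+=-b_-$ and $Xu^{b_+}_-=-b_+$ (compare even and odd parts in $Xu^{b_\mp}=-b_\mp$), together with the standard fact that a smooth geodesic-invariant function on $SM$ is exactly $g_\psi$ for some $g\in C^\infty_\alpha(\partial_+(SM))$.

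For $(2)\Rightarrow(1)$, given $b,h,h'$ I would set
\[v:=u^{b_-}_+ + u^{b_+}_- + (h_\psi)_+ + (h'_\psi)_-.\]
All four summands are smooth (the first two by Lemma \ref{lemma_parity_smoothness}, the last two because $h_\psi,h'_\psi\in C^\infty(SM)$ and the even/odd projections preserve smoothness), and since the parity parts of a geodesic-invariant function are again geodesic-invariant we get $Xv=-b_--b_+=-b$. A short parity count gives $v_0=(u^{b_-})_0+(h_\psi)_0$ (the odd summands contribute nothing to the $0$-mode) and $(X_\perp v)_0=(X_\perp u^{b_+})_0+(X_\perp h'_\psi)_0$ (since $X_\perp$ reverses parity, only the odd summands feed the $0$-mode of $X_\perp v$). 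Substituting into the identity above shows that $w:=(\id+i\hilb)v$ is smooth, holomorphic, and satisfies $Xw=-f$ with $f$ equal to the right-hand side of (2).

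For $(1)\Rightarrow(2)$, given a holomorphic $w\in C^\infty(SM)$ with $Xw=-f$ I would run this in reverse. Put $v:=\tfrac12(w+w_0)$, where $w_0$ is the (smooth) $0$-mode of $w$; then $v\in C^\infty(SM)$ and, using that $w$ is holomorphic, $(\id+i\hilb)v=w$. Set $b:=-Xv\in C^\infty(SM)$ and decompose $v=v_++v_-$. Comparing even, resp.\ odd, parts in $Xv=-b$ yields $Xv_+=-b_-$ and $Xv_-=-b_+$, so $X(v_+-u^{b_-}_+)=0$ and $X(v_--u^{b_+}_-)=0$; both differences are smooth, hence equal $(h_\psi)_+$ and $(h'_\psi)_-$ (which are even and odd respectively) for suitable $h,h'\in C^\infty_\alpha(\partial_+(SM))$. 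Therefore $v$ has exactly the form used in the other direction, and applying the identity to this $v$ gives $-f=Xw=-(\text{right-hand side of (2)})$, i.e.\ (2) holds.

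The step I expect to be the main obstacle is the regularity bookkeeping rather than any computation: $u^b$ is in general only continuous on $SM$, so one must ensure that only the genuinely smooth parity-pieces $u^{b_-}_+$ and $u^{b_+}_-$ ever appear, and this is precisely what dictates the somewhat asymmetric list of terms in (2) — the combination $(h_\psi)_+$ supplies the missing $\pm1$-mode corrections of the form $X_\perp(\cdot)_0$, while $(h'_\psi)_-$ supplies the missing $0$-mode corrections of the form $(X_\perp\cdot)_0$. The auxiliary facts that $\hilb:C^\infty(SM)\to C^\infty(SM)$ and that smooth geodesic-invariant functions are exactly the $g_\psi$ with $g\in C^\infty_\alpha(\partial_+(SM))$ are the other points that must be invoked carefully.
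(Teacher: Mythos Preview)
Your proof is correct and follows essentially the same route as the paper's: both set $\hat w=\tfrac12(w+w_0)$ (your $v$), derive the identity $X((\id+i\hilb)\hat w)=-(\id+i\hilb)b-iX_\perp\hat w_0-i(X_\perp\hat w)_0$ with $b=-X\hat w$, and then match the even/odd pieces of $\hat w$ against $u^{b_-}_+ +(h_\psi)_+$ and $u^{b_+}_- +(h'_\psi)_-$ using Lemma~\ref{lemma_parity_smoothness}. Your parity bookkeeping (why only the even summands feed $v_0$ and only the odd summands feed $(X_\perp v)_0$) is spelled out more explicitly than in the paper, but the argument is the same.
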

\begin{proof}
Suppose $w$ is holomorphic and $Xw = -f$. Write $w = (\id + i\hilb) \hat{w}$ where $\hat{w} = \frac{1}{2}(w + w_0)$, so $\hat{w} \in C^{\infty}(SM)$. Then, using the commutator formula for $\hilb$ and $X$, 
\begin{align*}
-f &= X(\id + i\hilb) \hat{w} = (\id + i\hilb) X\hat{w} - i[\hilb,X] \hat{w} \\
 &= (\id + i\hilb) X\hat{w} - i X_{\perp} \hat{w}_0 - i(X_{\perp} \hat{w})_0.
\end{align*}
Write $b = -X \hat{w}$. Separating the even and odd parts we have $X \hat{w}_+ = -b_-$ and $X \hat{w}_- = -b_+$, and consequently 
$$
\hat{w}_+ = u^{b_-}_+ + (h_\psi)_+, \quad \hat{w}_- = u^{b_+}_- + (h_{\psi}')_-
$$
for some $h, h' \in C^{\infty}_{\alpha}(\partial_+(SM))$. (Here we use that $u^{b_+}_-$ and $u^{b_-}_+$ are smooth in $SM$ by Lemma \ref{lemma_parity_smoothness}.) It follows that $f$ can be written in the required form.

Conversely, assume that $f$ has the stated form. Let 
$$
\hat{w} = u^{b_-}_+ + (h_\psi)_+ + u^{b_+}_- + (h_{\psi}')_-.
$$
Then $\hat{w} \in C^{\infty}(SM)$ and $X\hat{w} = -b$. Define $w = (\id + i\hilb)\hat{w}$, so that $w \in C^{\infty}(SM)$ is holomorphic. Then 
\begin{align*}
Xw = (\id + i\hilb) X\hat{w} - i[\hilb,X] \hat{w} = -(\id+i\hilb)b - iX_{\perp} \hat{w}_0 - i(X_{\perp} \hat{w})_0.
\end{align*}
This shows that $Xw = -f$.
\end{proof}

The characterization of those $f$ which admit antiholomorphic solutions is analogous.

\begin{Lemma} \label{lemma_abstract_characterization2}
Let $(M,g)$ be a nontrapping compact surface with strictly convex boundary, and let $f \in C^{\infty}(SM)$. The following are equivalent.
\begin{enumerate}
\item[(1)]
There exists an antiholomorphic $\tilde{w} \in C^{\infty}(SM)$ with $X\tilde{w} = -f$.
\item[(2)]
There exist $b \in C^{\infty}(SM)$ and $h, h' \in C^{\infty}_{\alpha}(\partial_+(SM))$ such that 
$$
f = (\id - i\hilb)b - i X_{\perp} (u^{b_-})_0 - i X_{\perp} (h_{\psi})_0 - i(X_{\perp} u^{b_+})_0 - i(X_{\perp} h'_{\psi})_0.
$$
\end{enumerate}
\end{Lemma}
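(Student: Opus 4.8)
The plan is to mirror the proof of Lemma \ref{lemma_abstract_characterization} verbatim, replacing the Pestov--Uhlmann commutator $[\hilb, X]$ at each occurrence by the same formula with the opposite sign, which is exactly the content of applying $(\id - i\hilb)$ instead of $(\id + i\hilb)$. Concretely, suppose $\tilde{w}$ is antiholomorphic with $X\tilde{w} = -f$. Since $\tilde{w}$ is antiholomorphic we may write $\tilde{w} = (\id - i\hilb)\hat{w}$ where $\hat{w} = \frac{1}{2}(\tilde{w} + \tilde{w}_0) \in C^{\infty}(SM)$. Applying $X$ and using the commutator formula \eqref{eq:puf},
\begin{align*}
-f &= X(\id - i\hilb)\hat{w} = (\id - i\hilb) X\hat{w} + i[\hilb, X]\hat{w} \\
 &= (\id - i\hilb) X\hat{w} + i X_{\perp}\hat{w}_0 + i(X_{\perp}\hat{w})_0.
\end{align*}
Setting $b = -X\hat{w}$ and splitting into even and odd parts in $v$, one gets $X\hat{w}_+ = -b_-$ and $X\hat{w}_- = -b_+$, hence $\hat{w}_+ = u^{b_-}_+ + (h_\psi)_+$ and $\hat{w}_- = u^{b_+}_- + (h'_\psi)_-$ for suitable $h, h' \in C^{\infty}_{\alpha}(\partial_+(SM))$, using Lemma \ref{lemma_parity_smoothness} for the smoothness of $u^{b_-}_+$ and $u^{b_+}_-$. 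Substituting back and noting that $\hat{w}_0 = (u^{b_-})_0 + (h_\psi)_0$ (the odd parts contribute nothing to the zeroth Fourier mode) yields precisely the stated formula for $f$ with all signs in front of the $\hilb$ and $X_\perp$ terms reversed.

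For the converse, given $f$ of the stated form, define $\hat{w} = u^{b_-}_+ + (h_\psi)_+ + u^{b_+}_- + (h'_\psi)_-$, which lies in $C^{\infty}(SM)$ by Lemma \ref{lemma_parity_smoothness} and satisfies $X\hat{w} = -b$. Then set $\tilde{w} = (\id - i\hilb)\hat{w}$, which is antiholomorphic and smooth; computing $X\tilde{w} = (\id - i\hilb)X\hat{w} + i[\hilb,X]\hat{w} = -(\id - i\hilb)b + i X_\perp \hat{w}_0 + i(X_\perp \hat{w})_0$ and matching with the given expression for $f$ shows $X\tilde{w} = -f$. There is no real obstacle here: the entire argument is the sign-conjugate of the proof of Lemma \ref{lemma_abstract_characterization}, and the only point requiring any care is bookkeeping the parity decomposition and checking that the even/odd parts of $\hat{w}$ feed into the zeroth-mode terms correctly — but this is identical to the holomorphic case since the sign change affects only the $i\hilb$ prefactor and not the structure of $[\hilb, X]$ applied to even versus odd functions. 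Accordingly the proof can simply be stated as ``analogous to the proof of Lemma \ref{lemma_abstract_characterization}, replacing $\id + i\hilb$ by $\id - i\hilb$ throughout,'' which is presumably what the authors do.
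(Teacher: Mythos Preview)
Your proposal is correct and exactly matches the paper's approach: the paper does not give an independent proof but simply precedes the lemma with the sentence ``The characterization of those $f$ which admit antiholomorphic solutions is analogous.'' Your detailed write-up is the natural elaboration of that one-line remark, and your closing prediction is precisely what the authors do.
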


We will focus on the following maps appearing in Lemmas \ref{lemma_abstract_characterization} and \ref{lemma_abstract_characterization2},
\begin{gather}
C^{\infty}_{\alpha}(\partial_+(SM)) \to C^{\infty}(M), \ \ h \mapsto (X_{\perp} h_{\psi})_0, \label{surjective_map1} \\
C^{\infty}_{\alpha}(\partial_+(SM)) \to C^{\infty}_{\delta}(M,\Lambda^1), \ \ h \mapsto X_{\perp}(h_{\psi})_0. \label{surjective_map2}
\end{gather}
Here $C^{\infty}_{\delta}(M,\Lambda^1)$ is the space of solenoidal complex $1$-forms on $M$ (that is, $1$-forms $\alpha$ satisfying $\delta \alpha = 0$, where $\delta$ is the codifferential). Note that if $F \in C^{\infty}(M)$ then $X_{\perp} F(x,v) = (\star dF)_j(x)v^j$ where $\star dF$ is indeed a solenoidal $1$-form.

It was proved in \cite{SaU} that the map \eqref{surjective_map1} is surjective when $(M,g)$ is a simple two-dimensional manifold. We need an analogous result for \eqref{surjective_map2}.

\begin{Proposition} \label{prop_surjectivity}
If $(M,g)$ is a simple surface, the map \eqref{surjective_map2} is surjective.
\end{Proposition}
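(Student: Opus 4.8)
The plan is to reduce surjectivity of the map \eqref{surjective_map2} to the known surjectivity of \eqref{surjective_map1}, together with the fact that every solenoidal $1$-form on a simple surface is of the form $\star dF$ for some $F$. First I would recall the pointwise identity $X_\perp F(x,v) = (\star dF)_j(x) v^j$ noted just before the statement: applying $V$ to $X_\perp F$ picks out the relevant angular Fourier modes, and more precisely, for a function $G$ on $M$ one has $X_\perp(G)_0$ appearing as a $1$-form (a degree-one element in the fibre variable). The key algebraic observation is that if $h \in C^\infty_\alpha(\partial_+(SM))$ and we set $G = (h_\psi)_0 \in C^\infty(M)$, then $X_\perp(h_\psi)_0 = X_\perp G = \star dG$ as a solenoidal $1$-form. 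So the image of \eqref{surjective_map2} is exactly $\{ \star dG \,;\, G \in \mathrm{image of } \ h \mapsto (h_\psi)_0 \}$.

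The next step is to identify the image of $h \mapsto (h_\psi)_0$. This is closely tied to the surjectivity of \eqref{surjective_map1}: that map is $h \mapsto (X_\perp h_\psi)_0$, and one should relate $(X_\perp h_\psi)_0$ to $(h_\psi)_0$. The cleanest route: given a target solenoidal $1$-form $\beta = \star dF$ (every solenoidal $1$-form on a simple surface arises this way, since $H^1$ vanishes and $\delta \star dF = 0$ reduces to solving $\Delta F = \delta \star^{-1}\beta$ with, say, Dirichlet data, which is solvable), I want to produce $h$ with $X_\perp(h_\psi)_0 = \star dF$, i.e. with $(h_\psi)_0 = F$ up to an additive constant (which is killed by $d$). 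So it suffices to show that the map $h \mapsto (h_\psi)_0$, $C^\infty_\alpha(\partial_+(SM)) \to C^\infty(M)$, is surjective onto $C^\infty(M)$, or at least onto $C^\infty(M)/\mathrm{constants}$.

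To see the latter, I would use the smooth first integrals / integrating functions machinery from Lemma \ref{lemma_parity_smoothness} and the structure of $C^\infty_\alpha$: given $F \in C^\infty(M)$, extend it to a collar and choose $h$ on $\partial_+(SM)$ so that $h_\psi$ is a smooth extension of $F$ that is constant along geodesics near where it needs to be — more concretely, pick any $w \in C^\infty(SM)$ with $Xw=0$ and $w_0 = F$; such $w$ exists because one can solve $Xw=0$ with prescribed boundary value $h$ on $\partial_+(SM)$ (that is exactly $w = h_\psi$) and then arrange $(h_\psi)_0 = F$ by a fixed-point / linear-algebra argument on the zeroth Fourier mode, using that $X$ maps $\bigoplus_{k\neq 0}$-type components in a way controlled on simple surfaces. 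Actually the slickest argument: by the surjectivity of \eqref{surjective_map1}, for the function $\star^{-1}(\text{something})$... — here I would instead directly invoke \cite{SaU}: they show \eqref{surjective_map1} is onto $C^\infty(M)$, and the same proof, applied with $X_\perp$ replaced by $V X_\perp$ bookkeeping, gives that $h \mapsto (h_\psi)_0$ hits every $F$. Composing with the solvability of $\Delta F = g$ on simple surfaces then yields that $h \mapsto X_\perp(h_\psi)_0$ hits every $\star dF$, hence every solenoidal $1$-form.

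\textbf{Main obstacle.} The delicate point is the last reduction: showing that $h \mapsto (h_\psi)_0$ (equivalently, that one can prescribe the zeroth Fourier mode of a smooth first integral of $X$ on a simple surface) is surjective, and that the required potential $F$ can be chosen smooth up to the boundary so that $h = F|_{\partial M}$-induced data lies in $C^\infty_\alpha(\partial_+(SM))$. This is where simplicity is essential (no conjugate points, so $\exp$ is a diffeomorphism and the relevant transport/elliptic problems are solvable with smooth solutions), and where I expect to lean hardest on the regularity statements of Lemma \ref{lemma_parity_smoothness} and the $C^\infty_\alpha$ characterization via even continuation $E$; the curvature term $[X,X_\perp]=-KV$ enters when one commutes $X_\perp$ past $X$, and controlling it is exactly the content that makes \eqref{surjective_map1}'s proof in \cite{SaU} nontrivial, so I would adapt that argument rather than redo it from scratch.
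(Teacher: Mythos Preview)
Your overall reduction matches the paper's proof exactly: write the map \eqref{surjective_map2} as $h \mapsto \star d\big((h_\psi)_0\big)$, note that every solenoidal $1$-form on a simply connected surface is $\star dF$ for some $F \in C^\infty(M)$ (since $d\star\beta = 0$ gives $-\star\beta = dF$), and reduce to the surjectivity of $h \mapsto (h_\psi)_0$ from $C^\infty_\alpha(\partial_+(SM))$ onto $C^\infty(M)$.

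Where you struggle --- and correctly flag as the main obstacle --- the paper resolves in one line: $(h_\psi)_0 = I^* h$, where $I$ is the geodesic ray transform on functions, and the surjectivity of $I^*: C^\infty_\alpha(\partial_+(SM)) \to C^\infty(M)$ is a result of Pestov--Uhlmann \cite{PU}. Your proposed workarounds do not close the gap. The map \eqref{surjective_map1} is $h \mapsto (X_\perp h_\psi)_0$, a genuinely different operator from $h \mapsto (h_\psi)_0$, and no ``$V X_\perp$ bookkeeping'' or adaptation of the \cite{SaU} argument converts surjectivity of the former into surjectivity of the latter; likewise the ``fixed-point / linear-algebra argument on the zeroth Fourier mode'' is not a proof. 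Once you invoke the surjectivity of $I^*$ from \cite{PU}, the whole proposition is three lines --- and this is precisely what the paper does.
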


Assuming this, we can present the proof of the main result of this section.

\begin{proof}[Proof of Theorem \ref{prop_holomorphic_integrating_factors}]
We first prove that ($\alpha$) implies ($\beta$). If $Xw = -f$ where $w$ is (anti)holomorphic, then Lemmas \ref{lemma_abstract_characterization} and \ref{lemma_abstract_characterization2} show that $f$ is the sum of an (anti)holomorphic function and a $1$-form as required.

Conversely, let $f = f' + f''$ where $f'$ is holomorphic and $f''$ is a $1$-form. We assume for the moment that $f''$ is solenoidal. Following Lemma \ref{lemma_abstract_characterization} we take $b = \frac{1}{2}(f' + (f')_0)$, so that $(\id + i\H) b = f'$. It is then enough to find $h, h' \in C^{\infty}_{\alpha}(\partial_+(SM))$ such that 
$$
iX_{\perp}(h_{\psi})_0 + i(X_{\perp} h_{\psi}')_0 = f'' - i X_{\perp}(u^{b_-})_0 - i(X_{\perp} u^{b_+})_0.
$$
The right hand side is the sum of a function on $M$ and a solenoidal $1$-form, and the surjectivity of the maps \eqref{surjective_map1} and \eqref{surjective_map2} imply that there exist $h$ and $h'$ which have the required property. Lemma \ref{lemma_abstract_characterization} shows the existence of a holomorphic solution $w$.

If $f''(x,v) = \alpha_j(x)v^j$ is a general $1$-form we decompose $\alpha = \alpha^s + dp$ where $\alpha^s$ is a solenoidal $1$-form and $p$ is a smooth function on $M$, and note that $Xw = -f$ is equivalent with $Xw' = -f' - (\alpha^s)_j v^j$ for $w' = w+p$. We have already proved that the last equation has a holomorphic solution $w'$, and then $w'-p$ is the required solution of the original equation. This proves that ($\beta$) implies ($\alpha$) in the holomorphic case, and the antiholomorphic case is proved similarly by Lemma \ref{lemma_abstract_characterization2}.

It follows immediately that (b) implies (a). Also, if (a) is valid then $f = f' + f'' = \tilde{f}' + \tilde{f}''$ where $f'$ is holomorphic, $\tilde{f}'$ is antiholomorphic, and $f''$ and $\tilde{f}''$ are $1$-forms. By looking at Fourier coefficients one obtains that $f$ is the sum of a $0$-form and a $1$-form.
\end{proof}

It remains to prove Proposition \ref{prop_surjectivity}. The map \eqref{surjective_map2} may be identified with the operator 
\begin{equation} \label{s_operator_definition}
S: C^{\infty}_{\alpha}(\partial_+ (SM)) \to C^{\infty}_{\delta}(M,\Lambda^1), \ \ Sh = \star d(h_{\psi})_0.
\end{equation}
It is well known that $(h_{\psi})_0 = I^* h$ where $I: L^2(M) \to L^2(\partial_+(SM))$ is the usual geodesic ray transform acting on functions, and $L^2(\partial_+(SM))$ is equipped with the volume form $-\langle v, \nu(x) \rangle \,d(\partial (SM))$ (see \cite{PU}).

\begin{proof}[Proof of Proposition \ref{prop_surjectivity}]
Let $\beta \in C^{\infty}_{\delta}(M,\Lambda^1)$ be a solenoidal $1$-form. Since $d\star\beta = 0$ and $M$ is simply connected, there is $F \in C^{\infty}(M)$ such that $dF = -\star\beta$. From \cite{PU} we know that the map 
$$
I^*: C^{\infty}_{\alpha}(\partial_+(SM)) \to C^{\infty}(M)
$$
is surjective. Take $h \in C^{\infty}_{\alpha}(\partial_+(SM))$ so that $I^* h = F$. It follows that $Sh = \star d(I^* h) = \star dF = \beta$ as required.
\end{proof}

\section{A regularity result for the transport equation} \label{sec:regularity}

The purpose of this section is to show a regularity result for the
transport equation which will be useful later on. The result is
 quite general (requires $M$ non-trapping and with strictly convex boundary)
 and essentially says that if a smooth function is in the
kernel of the attenuated ray transform, the solution to the associated transport equation must be smooth in all $SM$.

Given a smooth $w\in C^{\infty}(\partial_{+}(SM),\C^n)$ consider
the unique solution $w^{\sharp}:SM\to \C^n$
to the transport equation:
\[\left\{\begin{array}{ll}
X(w^{\sharp})+{\mathcal A}w^{\sharp}=0,\\
w^{\sharp}|_{\partial_{+}(SM)}=w.\\
\end{array}\right.\] 
Observe that
\[w^{\sharp}(x,v)=U_{-}(x,v)w(\alpha\circ\psi(x,v))\,\]
where $\psi(x,v):=\varphi_{\tau(x,v)}(x,v)$ (recall that $\varphi_{t}$ is the geodesic flow and $\tau(x,v)$ is the time it takes the geodesic determined by $(x,v)$ to exit $M$).
If we introduce an operator 
\[Q:C(\partial_{+}(SM),\C^n)\to C(\partial(SM),\C^n)\]
by setting
\[Qw(x,v)=\left\{\begin{array}{ll}
w(x,v)&\mbox{\rm if}\;(x,v)\in\partial_{+}(SM)\\
C^{\mathcal A}_{-}(x,v)(w\circ\alpha)(x,v)&\mbox{\rm if}\;(x,v)\in\partial_{-}(SM),\\
\end{array}\right.\] 
then
\[w^{\sharp}|_{\partial(SM)}=Qw.\]
Define
\[\mathcal S^{\infty}(\partial_{+}(SM),\C^n):=\{w\in C^{\infty}(\partial_{+}(SM),\C^n):\;w^{\sharp}\in C^{\infty}(SM,\C^n)\}.\]

We characterize this space purely in terms of scattering data as follows:

\begin{Lemma} The set of those smooth $w$ such that $w^{\sharp}$ is smooth
is given by 
\[\mathcal S^{\infty}(\partial_{+}(SM),\C^n)=\{w\in C^{\infty}(\partial_{+}(SM),\C^n):\;Qw\in C^{\infty}(\partial(SM),\C^n)\}.\]
\label{lemma:ss}
\end{Lemma}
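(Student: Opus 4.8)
The plan is to prove the nontrivial inclusion — that $Qw\in C^{\infty}(\partial(SM),\C^n)$ forces $w^{\sharp}\in C^{\infty}(SM,\C^n)$ — by reducing to the scalar, connection-free case, which is precisely the characterization of $C^{\infty}_{\alpha}(\partial_+(SM))$ in \cite[Lemma 1.1]{PU}. (The opposite inclusion is immediate, since $w^{\sharp}|_{\partial(SM)}=Qw$.) The device is to conjugate away the attenuation $\mathcal A$ by an \emph{ambient} integrating factor, turning the transport equation $Xw^{\sharp}+\mathcal A w^{\sharp}=0$ into the equation $Xv=0$ already understood in \cite{PU}.

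First I would embed $(M,g)$ into a nontrapping compact manifold $(\tilde M,g)$ with strictly convex boundary and $M\subset\tilde M^{\mathrm{int}}$ — exactly as in the proof of Lemma~\ref{lemma_parity_smoothness} — and extend $\mathcal A$ to some $\tilde{\mathcal A}\in C^{\infty}(S\tilde M,\C^{n\times n})$. Let $\tilde U_{-}:S\tilde M\to GL(n,\C)$ solve $X\tilde U_{-}+\tilde{\mathcal A}\tilde U_{-}=0$ with $\tilde U_{-}|_{\partial_+(S\tilde M)}=\id$. Since $\tilde\tau$ is smooth on $S(\tilde M^{\mathrm{int}})$ and $SM$ lies inside it, smooth dependence of ODE solutions on data and parameters gives $\tilde U_{-}\in C^{\infty}(SM,\C^{n\times n})$, everywhere invertible. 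Now a one-line computation — using $X(\tilde U_{-}^{-1})=\tilde U_{-}^{-1}\mathcal A$ on $SM$ together with $Xw^{\sharp}=-\mathcal A w^{\sharp}$ — shows that $v:=\tilde U_{-}^{-1}w^{\sharp}$ satisfies $Xv=0$ in $S(M^{\mathrm{int}})$. As $v$ is continuous on $SM$, it is constant along geodesics, hence $v=h_{\psi}$ with $h:=\tilde U_{-}^{-1}|_{\partial_+(SM)}\,w\in C^{\infty}(\partial_+(SM),\C^n)$. Because $\tilde U_{-}$ is smooth and invertible on $SM$, this gives $w^{\sharp}\in C^{\infty}(SM,\C^n)\iff h_{\psi}\in C^{\infty}(SM,\C^n)\iff h\in C^{\infty}_{\alpha}(\partial_+(SM),\C^n)$, and by \cite[Lemma 1.1]{PU} the last condition is equivalent to $Eh\in C^{\infty}(\partial(SM),\C^n)$, where $E$ denotes even continuation with respect to the scattering relation.

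It then remains to connect $Eh$ with $Qw$. The key identity is $U_{-}(x,v)=\tilde U_{-}(x,v)\,\tilde U_{-}((\alpha\circ\psi)(x,v))^{-1}$, which holds because $\tilde U_{-}^{-1}U_{-}$ is $X$-flow-invariant with boundary value $\tilde U_{-}^{-1}|_{\partial_+(SM)}$. Using this together with $\psi=\id$ on $\partial_-(SM)$, and comparing with the definitions of $Q$ and $E$, one obtains $Qw=\tilde U_{-}|_{\partial(SM)}\cdot Eh$ on all of $\partial(SM)$; since $\tilde U_{-}|_{\partial(SM)}$ is smooth and invertible, $Qw$ is smooth iff $Eh$ is, and the chain of equivalences closes. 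I expect the only point needing care to be this last step: one must verify that the matrix relating $Q$ and $E$ is globally smooth and invertible across $\partial(SM)$ — not merely separately on $\partial_+(SM)$ and $\partial_-(SM)$ — and this is exactly what the identity above delivers, since that matrix turns out to be the restriction to $\partial(SM)$ of the ambient $\tilde U_{-}$, whose smoothness on $SM$ was arranged in the first step. Everything else is the scalar argument of \cite{PU} read through the substitution $w^{\sharp}=\tilde U_{-}v$.
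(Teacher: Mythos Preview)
Your argument is correct and is essentially the paper's own proof: both embed $M$ in a slightly larger nontrapping manifold, produce a smooth fundamental solution $R=\tilde U_-$ of $XR+\mathcal A R=0$ on $SM$, write $w^{\sharp}=R\,(pw)_\psi$ with $p=R^{-1}|_{\partial_+(SM)}$ (your $h=pw$), observe that $Qw=R|_{\partial(SM)}\cdot E(pw)$, and then invoke \cite[Lemma 1.1]{PU}. Your explicit verification of the identity $U_-=\tilde U_-\,(\tilde U_-\circ\alpha\circ\psi)^{-1}$ is exactly what underlies the paper's displayed formula for $Qw$ in terms of $R$ and $pw$.
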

\begin{proof} We will reduce this lemma to Lemma 1.1 in \cite{PU}. Let us recall its statement.
The solution of the boundary value problem for the transport equation:
\[X(u)=0,\;\;\;u|_{\partial_{+}(SM)}=s\]
can be written in the form
\[u=s_{\psi}:=s\circ\alpha\circ\psi.\]
As in Section \ref{sec:hol}, define the extension operator
\[E:C(\partial_{+}(SM),\C^n)\to C(\partial(SM),\C^n)\]
by setting
\[Es(x,v)=\left\{\begin{array}{ll}
s(x,v)&\mbox{\rm if}\;(x,v)\in\partial_{+}(SM)\\
s\circ\alpha(x,v)&\mbox{\rm if}\;(x,v)\in\partial_{-}(SM),\\
\end{array}\right.\] 
then
\[s_{\psi}|_{\partial(SM)}=Es.\]
Lemma 1.1 in \cite{PU} asserts that $s_{\psi}$ is smooth if and only
if $Es$ is smooth.

Embed $M$ into a closed manifold $S$ and extend smoothly
the metric $g$ to $S$ and the attenuation $\mathcal A$ to the unit circle 
bundle of $S$. Let $W$ be an open neighborhood of $M$ in $S$. If $W$ is small enough, then $\overline{W}$ will be non-trapping and with strictly convex boundary
and assume such a $W$ is chosen.
Consider the unique solution to the transport equation in $S\overline{W}$:
\[\left\{\begin{array}{ll}
X(R)+\mathcal{A}R=0,\\
R|_{\partial_{+}(S\overline{W})}=\id.\\
\end{array}\right.\] 
If we restrict $R$ to $SM$ we obtain a {\it smooth} map
and we denote this restriction also by $R$. In fact for what follows any smooth
solution $R:SM\to GL(n,\C)$ to $X(R)+\mathcal{A}R=0$ will do.
Define $p:=R^{-1}|_{\partial_{+}(SM)}$. The main observation is that we can write
\[w^{\sharp}=R(pw)_{\psi}.\]
Also we have the following expression for $Q$:
\[Qw(x,v)=\left\{\begin{array}{ll}
R(x,v)p(x,v)w(x,v)&\mbox{\rm if}\;(x,v)\in\partial_{+}(SM)\\
R(x,v)((pw)\circ\alpha)(x,v)&\mbox{\rm if}\;(x,v)\in\partial_{-}(SM),\\
\end{array}\right.\] 
Obviously, if $w^{\sharp}$ is smooth, then $Qw=w^{\sharp}|_{\partial(SM)}$ is also smooth.
Assume now that $Qw$ is smooth.
Since $R$ is smooth $R^{-1}Qw=Epw$ is also smooth. As explained above
\cite[Lemma 1.1]{PU} asserts that $(pw)_{\psi}$ is smooth.
Once again, since $R$ is smooth it follows that $w^{\sharp}$ is smooth.
\end{proof}

Given a smooth
$f:SM\to\C^n$, recall that $u^f$ is the unique solution to
\[\left\{\begin{array}{ll}
X(u)+\mathcal{A}u=-f,\\
u|_{\partial_{-}(SM)}=0.\\
\end{array}\right.\]
The function $u^f$ may fail to be differentiable at $S(\partial M)$
due to the non-smoothness of $\tau$. However, we now show that
if  $f$ is in the kernel of $I_{\mathcal{A}}$ then $u^f$ is smooth on $SM$.

\begin{Proposition} Let $f:SM\to\C^n$ be smooth with $I_{\mathcal{A}}(f)=0$.
Then $u^f:SM\to \C^n$ is smooth.
\label{prop:smooth}
\end{Proposition}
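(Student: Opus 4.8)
The plan is to reduce the smoothness of $u^f$ to the characterization of $\mathcal S^{\infty}(\partial_+(SM),\C^n)$ given in Lemma \ref{lemma:ss}, exactly as in the scalar case of \cite{SaU}. The first observation is that $u^f$ is always continuous on $SM$ (it vanishes on $\partial_-(SM)$ and solves a linear ODE along each geodesic), and it is smooth on $SM \setminus S(\partial M)$ because $\tau$ is smooth there; the only possible failure of smoothness is along the glancing set $S(\partial M)$. To analyze this, I would pass to a slightly larger nontrapping manifold with strictly convex boundary $\widetilde{M}$ with $M \subseteq \widetilde{M}^{\mathrm{int}}$, extend the metric and the attenuation $\mathcal A$ smoothly, extend $f$ smoothly, and solve the transport equation $X\tilde u + \mathcal A \tilde u = -\tilde f$ on $S\widetilde{M}$ with $\tilde u|_{\partial_-(S\widetilde M)} = 0$. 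Since everything happens in the interior region where the exit time $\tilde\tau$ is smooth, $\tilde u$ restricted to $SM$ is a genuine smooth function, and it satisfies the same transport equation $X\tilde u + \mathcal A \tilde u = -f$ on $SM$.

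Next I would compare $u^f$ and $\tilde u$ on $SM$. Their difference $r := \tilde u - u^f$ satisfies the homogeneous equation $Xr + \mathcal A r = 0$ in $SM$ (in the interior, and by continuity up to the boundary), so $r = w^{\sharp}$ where $w := r|_{\partial_+(SM)} = \tilde u|_{\partial_+(SM)}$, which is smooth since $\tilde u$ is. Thus $u^f$ is smooth on $SM$ if and only if $w^{\sharp}$ is smooth, i.e.\ if and only if $w \in \mathcal S^{\infty}(\partial_+(SM),\C^n)$. By Lemma \ref{lemma:ss}, this is equivalent to $Qw \in C^{\infty}(\partial(SM),\C^n)$. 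So it remains to compute $Qw$ on $\partial_-(SM)$ and check its smoothness there. On $\partial_-(SM)$, $Qw = C^{\mathcal A}_{-}\cdot (w\circ\alpha)$; since $C^{\mathcal A}_{-}$ and $\alpha$ are smooth on $\partial_-(SM)$, this reduces to showing $w\circ\alpha$, equivalently the boundary value of $r$ on $\partial_-(SM)$, is smooth.

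Here is where the hypothesis $I_{\mathcal A}(f)=0$ enters. We have $u^f|_{\partial_+(SM)} = I_{\mathcal A} f = 0$, so $w = \tilde u|_{\partial_+(SM)} = r|_{\partial_+(SM)} = (\tilde u - u^f)|_{\partial_+(SM)} = \tilde u|_{\partial_+(SM)}$ — consistent — but more importantly $r|_{\partial(SM)}$ can be read off directly: $r$ agrees with $\tilde u$ on $\partial_+(SM)$ and, because $u^f|_{\partial_-(SM)}=0$, we get $r|_{\partial_-(SM)} = \tilde u|_{\partial_-(SM)}$, which is smooth. Thus $Qw = r|_{\partial(SM)} = \tilde u|_{\partial(SM)}$ is smooth on all of $\partial(SM)$, and Lemma \ref{lemma:ss} gives $w^{\sharp} = r \in C^{\infty}(SM,\C^n)$, whence $u^f = \tilde u - r$ is smooth on $SM$.

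The main obstacle — and the reason the vanishing hypothesis is needed — is precisely the last point: without $I_{\mathcal A}(f) = 0$ one only knows $r|_{\partial_+(SM)}$ is smooth, and there is no reason for its propagated boundary value $C^{\mathcal A}_-(w\circ\alpha)$ on $\partial_-(SM)$ to match up smoothly across the glancing region with the prescribed value, so $u^f$ genuinely can be nonsmooth at $S(\partial M)$. The hypothesis $I_{\mathcal A}f=0$ forces $r$ to have a smooth boundary trace on the \emph{whole} of $\partial(SM)$ (namely $\tilde u|_{\partial(SM)}$), which is exactly the condition $Qw\in C^\infty$ required to conclude. One should take a little care in checking that the extension $\widetilde{M}$ can be chosen nontrapping with strictly convex boundary (embed in a closed manifold and shrink), and that the various transport solutions depend smoothly on the data away from glancing, both of which are standard.
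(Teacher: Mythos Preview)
Your proposal is correct and follows essentially the same approach as the paper: extend to a slightly larger nontrapping manifold, solve the transport equation there to produce a smooth solution (your $\tilde u$, the paper's $r$), subtract $u^f$ to get a homogeneous solution, and then invoke Lemma~\ref{lemma:ss} using the hypothesis $I_{\mathcal A}(f)=0$ to see that $Qw=\tilde u|_{\partial(SM)}$ is smooth. The only minor slip is expository: you already use $I_{\mathcal A}(f)=0$ when writing $w=r|_{\partial_+(SM)}=\tilde u|_{\partial_+(SM)}$ (to know $w$ is smooth so that Lemma~\ref{lemma:ss} applies), a few lines before announcing ``here is where the hypothesis enters''; the paper's proof has the same structure and the same implicit use.
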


\begin{proof} As in the proof of Lemma \ref{lemma:ss} embed $M$ into a closed manifold $S$ and extend smoothly
the metric $g$ to $S$, and $f$ and $\mathcal{A}$ to the unit circle bundle of $S$. Let $W$ be an open neighborhood of $M$ in $S$. If $W$ is small enough, then $\overline{W}$ will be non-trapping and with strictly convex boundary
and assume such a $W$ is chosen.
Consider the unique solution to the following problem in $S\overline{W}$:
\[\left\{\begin{array}{ll}
X(r)+\mathcal{A}r=-f,\\
r|_{\partial_{-}(S\overline{W})}=0.\\
\end{array}\right.\]
Then, the restriction of $r$ to $SM$, still denoted by $r$, 
is a smooth solution $r:SM\to\C^n$ to $(X+\mathcal{A})(r)=-f$.
Observe that $r-u^f$ solves $(X+\mathcal{A})(r-u^f)=0$, thus if we let
$w:=(r-u^f)|_{\partial_{+}(SM)}$, then $w^{\sharp}=r-u^f$.
It follows that $u^f$ is smooth if and only if $w^{\sharp}$ is.
But smoothness of $w^{\sharp}$ is characterized by Lemma \ref{lemma:ss}.
If $u^f|_{\partial_{+}(SM)}=I_{\mathcal{A}}(f)=0$, then $Qw=w^{\sharp}|_{\partial(SM)}=r|_{\partial(SM)}$ which is smooth. It follows that $u^f$ is smooth.
\end{proof}

\section{Injectivity properties of $I_{A}$} \label{sec:injective}

Recall that $SM$ has a
canonical framing $\{X,X_{\perp},V\}$, where $X$ is the geodesic vector field, $V$
is the vertical vector field and $X_{\perp}=-[V,X]$. Recall also
that $X=[V,X_{\perp}]$ and $[X,X_{\perp}]=-KV$, where $K$ is the Gaussian curvature of the surface.

Let $d\Sigma^3$ be the usual volume form in $SM$. Given functions $u,v:SM\to \C^n$ we consider the
inner product
\[\langle u,v \rangle =\int_{SM}\langle u,v\rangle_{\C^n}\,d\Sigma^3.\]

Assume $A$ is a unitary (or Hermitian) connection, in other words, $A$ takes values in the set $\mathfrak u(n)$ of skew-Hermitian matrices.
Recall that the curvature $F_{A}$ of the connection $A$ is defined
as $F_{A}=dA+A\wedge A$. Then $\star F_{A}:M\to \mathfrak u(n)$.
Let $v\in T_{x}M$ be a unit vector and let $iv\in T_{x}M$ be the unique
unit vector such that $\{v,iv\}$ is an oriented orthonormal basis of $T_{x}M$. Then
$\star F_{A}(x)=F_{A}(v,iv)=dA(v,iv)+A(v)A(iv)-A(iv)A(v)$. On the other hand
it is easy to check that $A(v)A(iv)-A(iv)A(v)=[\star A,A](v)$ and 
\[dA(v,iv)=(X_{\perp}(A)-X(\star A))(x,v)\]
hence
\begin{equation}
\star F_{A}=X_{\perp}(A)-X(\star A)+[\star A,A].
\label{eq:curv}
\end{equation}

We will also have a Higgs field $\Phi:M\to\mathfrak u(n)$.
Recall that the connection $A$ induces a covariant derivative $d_{A}$ on endomorphisms so that $d_{A}\Phi=d\Phi+[A,\Phi]$.

We will begin by establishing an energy identity or a ``Pestov type identity'', which generalizes the standard Pestov identity \cite{Sh} to the case where a connection and Higgs field are present. There are several predecessors for
this formula \cite{V,Sha}, but in the form stated below it appears to be new. 
The derivation of the identity and its use for simple surfaces is in the spirit of \cite{SU,DP}.

\begin{Lemma}[Energy identity] If $u:SM\to\C^n$ is a smooth function
such that $u|_{\partial(SM)}=0$, then
\begin{multline*}
|(X+A+\Phi)(Vu)|^2-\langle K\,V(u),V(u)\rangle-\langle \star F_{A}u,V(u)\rangle-\Re\langle (\star d_{A}\Phi) u,V(u)\rangle\\-\Re\langle\Phi u,(X+A+\Phi)u\rangle=|V[(X+A+\Phi)(u))]|^2-|(X+A+\Phi)(u)|^2.
\end{multline*}
\label{lemma:pestov}
\end{Lemma}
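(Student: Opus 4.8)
The plan is to derive this energy identity by the standard Pestov strategy, but with the operator $X$ systematically replaced by the ``attenuated geodesic vector field'' $\mathcal{D} := X+A+\Phi$. The first step is to establish the commutator structure of the three operators $\mathcal{D}$, $X_\perp$-type object, and $V$ acting on $\C^n$-valued functions. Since $A$ and $\Phi$ act by multiplication and $V(A) = -\star A$ while $V(\Phi) = 0$, one computes $[V,\mathcal{D}] = [V,X] + V(A) + V(\Phi) = -X_\perp - \star A$. It is therefore natural to set $\mathcal{D}_\perp := X_\perp + \star A$, so that $[V,\mathcal{D}] = -\mathcal{D}_\perp$, mirroring $[V,X] = -X_\perp$. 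One then needs the ``curved'' analogue of $[X,X_\perp] = -KV$: using \eqref{eq:curv}, namely $\star F_A = X_\perp(A) - X(\star A) + [\star A, A]$, together with $[V, \mathcal{D}_\perp] = [V,X_\perp] + V(\star A)$ and $V(\star A) = -V(V(A)) = \star\star A = -A$ (so $[V,\mathcal{D}_\perp] = X + A = \mathcal{D} - \Phi$, hence $[V,\mathcal{D}_\perp] = \mathcal{D} - \Phi$), and finally $[\mathcal{D},\mathcal{D}_\perp] = -K V - \star F_A$. The Higgs field enters through the extra identity $[\mathcal{D},V]$ applied twice will leave a $\star d_A\Phi$ term; concretely $\mathcal{D}_\perp(\Phi) = X_\perp(\Phi) + [\star A, \Phi] = \star(d\Phi + [A,\Phi]) = \star d_A\Phi$ (up to sign conventions), which is where the $(\star d_A\Phi)u$ term originates.

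With these commutator relations in hand, the second step is the algebraic manipulation. Following the classical derivation, I would expand $\|\mathcal{D}(Vu)\|^2$ by writing $\mathcal{D} V u = V\mathcal{D} u + [\mathcal{D},V]u = V\mathcal{D} u + \mathcal{D}_\perp u$, so that
\[
\|\mathcal{D}(Vu)\|^2 = \|V\mathcal{D} u\|^2 + \|\mathcal{D}_\perp u\|^2 + 2\Re\langle V\mathcal{D} u, \mathcal{D}_\perp u\rangle.
\]
The cross term is then integrated by parts: using that $X, X_\perp, V$ are volume-preserving and that $A, \Phi$ are skew-Hermitian (so $\mathcal{D}, \mathcal{D}_\perp$ have the obvious formal adjoints, i.e. $\langle \mathcal{D} u, w\rangle = -\langle u, \mathcal{D} w\rangle$ and similarly for $\mathcal{D}_\perp$, when $u$ or $w$ vanishes on $\partial(SM)$), one moves $V$ off $\mathcal{D} u$ and $\mathcal{D}_\perp$ off $u$, picking up the commutators $[\mathcal{D}_\perp, V] = -(\mathcal{D}-\Phi)$ and then $[\mathcal{D},\mathcal{D}_\perp] = -KV - \star F_A$ inside the integral. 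The skew-Hermitian hypothesis on $A$ and $\Phi$ is exactly what makes these integrations by parts produce no spurious boundary or lower-order contributions beyond the curvature terms. Carefully tracking the $\Phi$-pieces through the two integrations by parts yields the $\langle \star F_A u, Vu\rangle$, $\Re\langle (\star d_A\Phi) u, Vu\rangle$, and $\Re\langle \Phi u, \mathcal{D} u\rangle$ terms; the boundary condition $u|_{\partial(SM)}=0$ kills all boundary terms. Rearranging and adding $\|\mathcal{D}_\perp u\|^2$ appropriately reproduces the stated identity.

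The main obstacle I expect is bookkeeping: keeping consistent sign conventions for $\star$ (note the paper uses $V(A) = -\star A$ and $\star\star = -\id$ on $1$-forms in two dimensions) and correctly identifying how the Higgs field, which is \emph{not} killed by $V$ in the sense of not commuting with $\mathcal{D}_\perp$, threads through each integration by parts. In particular the term $\Re\langle \Phi u, \mathcal{D} u\rangle$ is not manifestly of ``curvature type'' and appears only after combining the $V\mathcal{D} u$ cross term with the adjoint of $\mathcal{D}_\perp$; verifying that its coefficient is exactly $-1$ (i.e. $-\Re\langle \Phi u, \mathcal{D} u\rangle$ on the left) requires care. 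A secondary subtlety is justifying the integrations by parts in the first place: $u$ is assumed smooth on all of $SM$ with $u|_{\partial(SM)}=0$, so this is legitimate, but one should note the identity is stated for such $u$ precisely because $u^f$ need not be smooth in general — this is why Proposition \ref{prop:smooth} was proved beforehand. Once the commutator algebra is set up cleanly, the rest is a direct, if somewhat lengthy, computation exactly parallel to the uncoupled Pestov identity.
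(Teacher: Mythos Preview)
Your approach is essentially identical to the paper's: set up the three commutators for $\mathcal{D}=X+A+\Phi$, $\mathcal{D}_\perp=X_\perp+\star A$, and $V$, expand $|\mathcal{D}(Vu)|^2 = |V\mathcal{D} u|^2 + |\mathcal{D}_\perp u|^2 + 2\Re\langle V\mathcal{D} u,\mathcal{D}_\perp u\rangle$, and reduce the cross term via Lemma~\ref{lemma:anti} and the commutators. The paper carries this out exactly as you outline.

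There is one slip worth flagging. You write $[\mathcal{D},\mathcal{D}_\perp]=-KV-\star F_A$, but this omits the Higgs contribution: since $\mathcal{D}$ contains $\Phi$, one has $[\Phi,X_\perp+\star A]=-X_\perp(\Phi)-[\star A,\Phi]=-\star d_A\Phi$, and hence the correct formula (which the paper states) is
\[
[\mathcal{D},\mathcal{D}_\perp]=-KV-\star F_A-\star d_A\Phi.
\]
You do identify $\mathcal{D}_\perp(\Phi)=\star d_A\Phi$ separately, so you have the right ingredient, but your description of \emph{where} it enters (``$[\mathcal{D},V]$ applied twice'') is off: the $\star d_A\Phi$ term appears directly in the single commutator $[\mathcal{D},\mathcal{D}_\perp]$, at the same step as $\star F_A$. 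With that correction, the bookkeeping you anticipate goes through, and the $-\Re\langle\Phi u,\mathcal{D} u\rangle$ term arises from $[V,\mathcal{D}_\perp]=\mathcal{D}-\Phi$ in the first integration by parts, exactly as you suggest.
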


Before proving the identity we need a preliminary lemma.

\begin{Lemma} For any pair of smooth functions
$u,g:SM\to\C^n$ we have
\[\langle V(u),g\rangle=-\langle u,V(g)\rangle.\]
Moreover, if in addition $u|_{\partial(SM)}=0$, then
\[\langle Pu,g\rangle=-\langle u,Pg\rangle\]
where $P=X+A+\Phi$ or $P=X_{\perp}+\star A$.
\label{lemma:anti}
\end{Lemma}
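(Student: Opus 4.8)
The plan is to prove the two integration-by-parts (skew-adjointness) statements directly, using the fact that $X$, $X_\perp$, and $V$ are all volume-preserving for $d\Sigma^3$ together with the skew-Hermitian hypotheses on $A$ and $\Phi$. First I would dispose of the identity $\langle V(u),g\rangle = -\langle u, V(g)\rangle$: since $V$ generates the circle action on the fibers, it preserves $d\Sigma^3$, and its flow has no boundary term because the fibers are closed circles (so there is no boundary contribution at $\partial(SM)$ from integrating along $V$); hence $\int_{SM} V\langle u,g\rangle_{\C^n}\,d\Sigma^3 = 0$, which upon expanding $V\langle u,g\rangle_{\C^n} = \langle Vu,g\rangle_{\C^n} + \langle u,Vg\rangle_{\C^n}$ gives the claim. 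Here no hypothesis on $u$ at the boundary is needed.

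Next I would treat $P = X + A + \Phi$. Write $\langle Pu, g\rangle = \langle Xu,g\rangle + \langle Au,g\rangle + \langle \Phi u, g\rangle$. For the first term, $X$ is volume-preserving, so $\int_{SM} X\langle u,g\rangle_{\C^n}\,d\Sigma^3$ equals a boundary integral over $\partial(SM)$ of the form $\int_{\partial(SM)} \langle u,g\rangle_{\C^n}\,\langle v,\nu\rangle\, d(\partial(SM))$ (divergence theorem applied to the vector field whose flux is $\langle u,g\rangle_{\C^n}$ along $X$); this vanishes by the hypothesis $u|_{\partial(SM)} = 0$. Expanding $X\langle u,g\rangle_{\C^n} = \langle Xu,g\rangle_{\C^n} + \langle u,Xg\rangle_{\C^n}$ then yields $\langle Xu,g\rangle = -\langle u, Xg\rangle$. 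For the algebraic terms, since $A(x,v) \in \mathfrak{u}(n)$ we have $A^* = -A$ pointwise, so $\langle Au,g\rangle_{\C^n} = \langle u, A^*g\rangle_{\C^n} = -\langle u, Ag\rangle_{\C^n}$, and integrating gives $\langle Au,g\rangle = -\langle u,Ag\rangle$; identically for $\Phi$ since $\Phi(x)\in\mathfrak u(n)$. Summing the three contributions gives $\langle Pu,g\rangle = -\langle u, Pg\rangle$.

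The case $P = X_\perp + \star A$ is entirely analogous: $X_\perp$ is volume-preserving, so the same divergence-theorem argument gives $\langle X_\perp u, g\rangle = -\langle u, X_\perp g\rangle$ using $u|_{\partial(SM)} = 0$; and $\star A$ is a pointwise multiplication by a skew-Hermitian matrix (indeed $\star A = iA_{-1} - iA_1$ is valued in $\mathfrak u(n)$ since $A$ is), so it is pointwise skew-adjoint, giving $\langle \star A\, u, g\rangle = -\langle u, \star A\, g\rangle$. Adding yields the claim.

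I do not expect a serious obstacle here; the only point requiring a little care is the boundary-term bookkeeping in the divergence theorem for the vector fields $X$ and $X_\perp$ on the manifold-with-boundary $SM$ — one must check that the flux of $X$ (resp.\ $X_\perp$) through $\partial(SM)$ really does reduce to an integral of $\langle u,g\rangle_{\C^n}$ against a smooth density, so that $u|_{\partial(SM)} = 0$ kills it. This is standard (it is exactly what underlies the usual Pestov identity on manifolds with boundary, cf.\ \cite{Sh}), and for $V$ there is simply no boundary term at all. The skew-Hermitian hypotheses on $A$ and $\Phi$ are used in an essential but elementary way to make the zeroth-order multiplication operators skew-adjoint.
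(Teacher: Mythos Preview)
Your proposal is correct and follows essentially the same approach as the paper: both arguments use that $X$, $X_\perp$, and $V$ preserve $d\Sigma^3$, apply Stokes' theorem to $\int_{SM} Y\langle u,g\rangle_{\C^n}\,d\Sigma^3$ to obtain a boundary integral (the paper writes it as $\int_{\partial(SM)} \langle u,g\rangle_{\C^n}\, i_Y(d\Sigma^3)$), observe that for $Y=V$ the boundary term vanishes because $V$ is tangent to $\partial(SM)$ while for $Y=X,X_\perp$ it vanishes by the hypothesis $u|_{\partial(SM)}=0$, and then handle the zeroth-order terms via the skew-Hermitian assumption on $A$, $\Phi$, and $\star A$.
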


\begin{proof} The Lie derivative of the volume form $d\Sigma^3$ along the vector fields $X$, $X_{\perp}$ and $V$ is zero, therefore for $Y=X,X_{\perp},V$ and any smooth function $f$ we have
\[\int_{SM}Y(f)\,d\Sigma^3=\int_{\partial(SM)}f\,i_{Y}(d\Sigma^3).\]
Since $i_{V}(d\Sigma^3)$ vanishes on $\partial(SM)$ the first claim of the lemma follows.

If we assume that $u$ vanishes on $\partial(SM)$ we deduce similarly that
\[\langle X(u),g\rangle=-\langle u,X(g)\rangle,\]
\[\langle X_{\perp}(u),g\rangle=-\langle u,X_{\perp}(g)\rangle.\]
Combining this with the fact that $A+\Phi$ and $\star A$ are skew-Hermitian matrices the second claim of the lemma easily follows.
\end{proof}

\begin{proof}[Proof of Lemma \ref{lemma:pestov}] The starting point are the structure equations of the surface
\begin{align*}
[V,X]&=-X_{\perp},\\
[V,X_{\perp}]&=X,\\
[X,X_{\perp}]&=-KV.
\end{align*}

The way to introduce the connection and the Higgs field is to replace the operators $X$ by $X+A+\Phi$ and $X_{\perp}$ by $X_{\perp}-V(A)=X_{\perp}+\star A$. Here we understand as always
that $A$ and $\Phi$ act on functions $u:SM\to\C^n$ by multiplication.

Thus we compute
\begin{align*}
&[V,X+A+\Phi]=-X_{\perp}-\star A,\\
&[V,X_{\perp}+\star A]=(X+A+\Phi)-\Phi,\\
&[X+A+\Phi,X_{\perp}+\star A]=-KV-\star F_{A}-\star d_{A}\Phi.
\end{align*}
For the last equation one needs to use (\ref{eq:curv}) and that
$X_{\perp}(\Phi)=\star d\Phi$.

Now we use the first two bracket relations in the following string of equalities
together with the first claim of Lemma \ref{lemma:anti}.

\begin{align*}
-2\langle& (X_{\perp}+\star A)(u),V((X+A+\Phi)(u))\rangle\\
&=-\langle (X_{\perp}+\star A)(u),V((X+A+\Phi)(u))\rangle+\langle V((X_{\perp}+\star A)(u)),(X+A+\Phi)(u)\rangle\\ 
&=\langle (X_{\perp}+\star A)(u),(X_{\perp}+\star A)(u)-(X+A+\Phi)(Vu)\rangle\\
&\;\;\;\;\;\;\;\;\;\;\;\;\;-\langle -(X+A+\Phi)(u)+\Phi u-(X_{\perp}+\star A)(Vu),(X+A+\Phi)(u)\rangle\\
&=|(X_{\perp}+\star A)(u)|^{2}+|(X+A+\Phi)(u)|^2-\langle (X_{\perp}+\star A)(u),(X+A+\Phi)(Vu)\rangle\\
&\;\;\;\;\;\;\;\;\;\;\;\;\;\;+\langle (X_{\perp}+\star A)(Vu),(X+A+\Phi)(u)\rangle-\langle \Phi u,(X+A+\Phi)u\rangle.
\end{align*}
So far we have not used that $u$ vanishes on $\partial(SM)$. We will use it now via the second claim in Lemma \ref{lemma:anti} together with the third bracket relation involving curvatures. Note that $V(u)|_{\partial(SM)}=0$.
We have
\begin{align*}
&-\langle (X_{\perp}+\star A)(u),(X+A+\Phi)(Vu)\rangle=\langle(X+A+\Phi)(X_{\perp}+\star A)(u),V(u)\rangle\\
&=-\langle KV(u)+\star F_{A}u+(\star d_{A}\Phi)u,V(u)\rangle+\langle (X_{\perp}+\star A)(X+A+\Phi)(u),V(u)\rangle\\
&=-\langle KV(u)+\star F_{A}u+(\star d_{A}\Phi)u,V(u)\rangle-\langle (X+A+\Phi)(u),(X_{\perp}+\star A)V(u)\rangle\\
&=-\langle KV(u)+\star F_{A}u+(\star d_{A}\Phi)u,V(u)\rangle-\overline{\langle (X_{\perp}+\star A)V(u),(X+A+\Phi)(u)\rangle}.\\
\end{align*}
Combining we obtain:

\begin{align}\label{eq:pes}
-&2\langle (X_{\perp}+\star A)(u),V((X+A+\Phi)(u))\rangle\\ \notag
&=|(X_{\perp}+\star A)(u)|^{2}+|(X+A+\Phi)(u)|^2-\langle KV(u),V(u)\rangle-\langle\star F_{A}u,V(u)\rangle\\
&-\langle (\star d_{A}\Phi)u,Vu\rangle-\langle \Phi u,(X+A+\Phi)(u)\rangle+2i\Im\langle (X_{\perp}+\star A)(Vu),(X+A+\Phi)(u)\rangle.\notag
\end{align}

To obtain the identity in the lemma we need one more step.
We use the bracket relation $[V,X+A+\Phi]=-X_{\perp}-\star A$ to derive

\begin{align*}
&|(X+A+\Phi)(Vu)|^2\\
&=2\Re\langle (X_{\perp}+\star A)(u),V((X+A+\Phi)(u))\rangle+|V((X+A+\Phi)u)|^2+|(X_{\perp}+\star A)(u)|^2,
\end{align*}
which combined with the real part of (\ref{eq:pes}) above proves the lemma.
\end{proof}

\begin{Remark}{\rm The same Energy identity holds true for {\it closed} surfaces.}
\end{Remark}

To use the Energy identity we need to control the signs of various terms. 
The first easy observation is the following:

\begin{Lemma} Assume $(X+A+\Phi)(u)=F(x)+\alpha_{j}(x)v^j$, where $F:M\to\C^n$
is a smooth function and $\alpha$ is a $\C^n$-valued 1-form. Then
\[|V[(X+A+\Phi)(u))]|^2-|(X+A+\Phi)(u)|^2=-|F|^2\leq 0.\]
\label{lemma:easy}
\end{Lemma}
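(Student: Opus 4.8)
\textbf{Proof plan for Lemma~\ref{lemma:easy}.}
The statement is an elementary computation once one understands how the operator $V$ acts on functions of the form $F(x)+\alpha_j(x)v^j$ on $SM$, so the plan is simply to carry out that computation cleanly. First I would record the Fourier-mode decomposition of the right-hand side: writing $h := (X+A+\Phi)(u) = F(x) + \alpha_j(x)v^j$, the term $F(x)$ lies in $\Omega_0$ (it is independent of the fibre variable), while the $1$-form term $\alpha_j(x)v^j$ is the restriction to $SM$ of a $1$-form and hence, exactly as in the expansion $A = A_{-1}+A_1$ used in the proof of Lemma~\ref{lemma:bracket}, decomposes as a sum of a degree $+1$ and a degree $-1$ component, say $(\alpha_j v^j) = \beta_1 + \beta_{-1}$ with $\beta_{\pm 1}\in\Omega_{\pm 1}$. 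Thus $h = h_{-1}+h_0+h_1$ with $h_0 = F$.

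Next I would use that $-iV$ acts as multiplication by $k$ on $\Omega_k$, so $V h = -i(h_1 - h_{-1}) = -i\beta_1 + i\beta_{-1}$, which is purely the degree $\pm 1$ part of $h$; in particular $V h$ has no $\Omega_0$ component and $|Vh|^2 = |\beta_1|^2 + |\beta_{-1}|^2$ by orthogonality of the $H_k$ under the inner product $\langle\cdot,\cdot\rangle$ on $SM$. On the other hand, again by orthogonality of distinct Fourier modes, $|h|^2 = |h_0|^2 + |h_1|^2 + |h_{-1}|^2 = |F|^2 + |\beta_1|^2 + |\beta_{-1}|^2$. Subtracting gives
\[
|V[(X+A+\Phi)(u)]|^2 - |(X+A+\Phi)(u)|^2 = \bigl(|\beta_1|^2+|\beta_{-1}|^2\bigr) - \bigl(|F|^2+|\beta_1|^2+|\beta_{-1}|^2\bigr) = -|F|^2 \leq 0,
\]
which is exactly the claimed identity.

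There is essentially no obstacle here: the only points requiring a line of justification are that a $1$-form restricted to $SM$ has only Fourier modes of degree $\pm 1$ (already used in the proof of Lemma~\ref{lemma:bracket}) and that the $H_k$ are mutually orthogonal in $L^2(SM,\C^n)$ (recorded in Section~\ref{sec:prelim}). One could alternatively verify the identity in isothermal coordinates using $V=\partial/\partial\theta$ and $v = e^{-\lambda}(\cos\theta,\sin\theta)$, noting $\partial_\theta F = 0$ and that $\alpha_j v^j$ is a trigonometric polynomial of degree one in $\theta$ whose $\theta$-derivative kills the constant term, but the Fourier-mode argument is cleaner and dimension-free. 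The inequality at the end is immediate since $|F|^2 \geq 0$, with equality precisely when $F\equiv 0$.
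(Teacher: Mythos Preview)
Your proof is correct and follows essentially the same approach as the paper: the paper simply records the two identities $|V(\alpha)|^2 = |\alpha|^2$ and $|F+\alpha|^2 = |F|^2 + |\alpha|^2$ and subtracts, which is exactly your Fourier-mode computation written more tersely. Your version spells out why $|V(\alpha)|^2 = |\alpha|^2$ (namely that $V$ acts on $\Omega_{\pm 1}$ as multiplication by $\mp i$) and makes the orthogonality explicit, but the underlying argument is identical.
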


\begin{proof} It suffices to note the identities:
\[|V[(X+A+\Phi)(u))]|^2=|V(\alpha)|^2=|\alpha|^2,\]
\[|F+\alpha|^2=|\alpha|^2+|F|^2.\]
\end{proof}

Next we have the following lemma due to the absence of conjugate points
on simple surfaces (compare with \cite[Theorem 4.4]{DP}):

\begin{Lemma} Let $M$ be a compact simple surface.
If $u:SM\to\C^n$ is a smooth function
such that $u|_{\partial(SM)}=0$, then
\[|(X+A+\Phi)(Vu)|^2-\langle K\,V(u),V(u)\rangle\geq 0.\]
\label{lemma:nonconj}
\end{Lemma}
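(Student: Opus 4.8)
The plan is to reduce this to a statement about the vertical derivative of a vector-valued function along the geodesic flow, and then to exploit the absence of conjugate points on a simple surface via an index-form argument. First I would set $z := Vu$, which is a smooth function $SM \to \C^n$ vanishing on $\partial(SM)$ together with $u$. The quantity to be estimated is $\|(X+A+\Phi)z\|^2 - \langle Kz, z\rangle$, and the point is that this should be recognized as a nonnegative quadratic form precisely because simple surfaces have no conjugate points, mirroring \cite[Theorem 4.4]{DP}.

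The main step is a pointwise-along-geodesics argument. I would restrict attention to a single (maximal) geodesic $t \mapsto \gamma(t)$ with $\gamma$ the corresponding curve in $SM$, parametrised on an interval $[0,\tau]$ with the endpoints on $\partial M$, and write $y(t) := z(\varphi_t(x,v))$, so that $(X+A+\Phi)z$ restricted to this geodesic equals $\dot y + (A+\Phi)y$ where $A+\Phi$ is evaluated along the lifted curve. Because $A+\Phi$ is skew-Hermitian, the operator $D_t y := \dot y + (A+\Phi)y$ is a metric connection along the geodesic: $\frac{d}{dt}\langle y_1, y_2\rangle_{\C^n} = \langle D_t y_1, y_2\rangle + \langle y_1, D_t y_2\rangle$. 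Hence $\int_0^\tau \big( |D_t y|^2 - K(\gamma(t)) |y|^2 \big)\,dt$ is exactly the index form of the scalar Jacobi equation applied coordinatewise in a $D_t$-parallel frame, and the absence of conjugate points on a simple surface (equivalently, the positivity of this index form on functions vanishing at the endpoints — see the standard Pestov/index-form estimate in \cite{Sh}, and the version in \cite{DP}) gives $\int_0^\tau |D_t y|^2\,dt \geq \int_0^\tau K |y|^2\,dt$ for each such geodesic, with $y(0)=y(\tau)=0$.

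To assemble the global inequality I would integrate the pointwise inequality over the space of geodesics, i.e. use the coarea/Santaló formula on $SM$ which writes $\int_{SM} G\,d\Sigma^3$ as an integral over $\partial_+(SM)$ of $\int_0^{\tau(x,v)} G(\varphi_t(x,v))\,dt$ against the measure $|\langle v,\nu\rangle|\,d(\partial(SM))$. Applying this with $G = |(X+A+\Phi)z|^2 - \langle Kz,z\rangle_{\C^n}$ and using the per-geodesic inequality above (each geodesic segment having $z$ vanishing at both endpoints, since $u|_{\partial(SM)}=0$ implies $z|_{\partial(SM)}=0$) yields $\|(X+A+\Phi)(Vu)\|^2 - \langle KV(u),V(u)\rangle \geq 0$, which is the claim.

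The hard part will be the index-form estimate itself: one must carefully justify that on a simple surface the scalar index form $\int_0^\tau (|\dot\phi|^2 - K\phi^2)\,dt$ is nonnegative on functions $\phi$ with $\phi(0)=\phi(\tau)=0$, uniformly over all geodesic segments, and that passing to the $D_t$-parallel frame legitimately decouples the $\C^n$-valued problem into $n$ scalar ones with the \emph{same} curvature weight $K$ — this is where skew-Hermiticity of $A+\Phi$ is essential. One should also be slightly careful that $Vu$ and the integrand are only continuous (not smooth) at $S(\partial M)$, so the Santaló formula is applied to a continuous integrand, which is harmless. Once these points are in place the rest is the routine Santaló bookkeeping. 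Alternatively, and perhaps more in the spirit of the surrounding sections, one could avoid the Jacobi-field language entirely and instead derive the inequality from a Pestov-type identity for the operator pair $(X+A+\Phi, X_\perp+\star A)$ applied to a suitably chosen function built from $z$ — but the index-form route is the most transparent and is the one I would write up.
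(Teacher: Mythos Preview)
Your argument is correct, but the paper takes a different and somewhat slicker route. Instead of working geodesic-by-geodesic and assembling via Santal\'o, the paper uses a \emph{global} Riccati solution: one chooses a smooth $a:SM\to\mathbb R$ satisfying $X(a)+a^2+K=0$ (such a solution exists on all of $SM$ precisely because there are no conjugate points), sets $\psi=Vu$, and completes the square pointwise,
\[
\|(X+A+\Phi)\psi-a\psi\|_{\C^n}^2=\|(X+A+\Phi)\psi\|_{\C^n}^2-K\|\psi\|_{\C^n}^2-X(a\|\psi\|_{\C^n}^2),
\]
using skew-Hermiticity of $A+\Phi$ to kill the cross term. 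Integrating over $SM$ and using $\psi|_{\partial(SM)}=0$ removes the total derivative, giving the inequality immediately as $|(X+A+\Phi)\psi-a\psi|^2\geq 0$.

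The two arguments are of course close cousins: the Riccati solution $a$ is the logarithmic derivative of a nowhere-vanishing Jacobi field, and completing the square with $a$ is exactly the classical trick that proves index-form positivity. Your route has the advantage of being conceptually transparent (it says directly ``no conjugate points $\Rightarrow$ index form $\geq 0$'') and making visible why skew-Hermiticity matters (the $D_t$-parallel frame). The paper's route avoids the Santal\'o bookkeeping and the parallel-frame reduction entirely, at the cost of invoking the existence of a global smooth Riccati solution, which one must cite. One small correction: your worry about smoothness at $S(\partial M)$ is misplaced here, since $u$ is assumed smooth on $SM$ by hypothesis, so $Vu$ and the integrand are smooth; the non-smoothness issue arises for $u^f$ elsewhere in the paper, not in this lemma.
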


\begin{proof} Consider a smooth function $a:SM\to \mathbb R$ which solves the
Riccati equation $X(a)+a^2+K=0$. These exist by the absence of conjugate points (cf. for example \cite[Theorem 6.2.1]{S} or proof of Lemma 4.1 in \cite{SU}). Set for simplicity $\psi=V(u)$. Clearly
$\psi|_{\partial(SM)}=0$.

Let us compute using that $A+\Phi$ is skew-Hermitian:

\begin{align*}
\|(X+A+\Phi)&(\psi)-a\psi\|^2_{\C^n}\\
&=\|(X+A+\Phi)(\psi)\|^2_{\C^n}-2\Re\langle (X+A+\Phi)(\psi),a\psi\rangle_{\C^n}+a^2\|\psi\|_{\C^{n}}^2\\
&=\|(X+A+\Phi)(\psi)\|^2_{\C^{n}}-2a\Re\langle X(\psi),\psi\rangle_{\C^{n}}+a^2\|\psi\|^2_{\C^{n}}.
\end{align*}

Using the Riccati equation we have
\[X(a\|\psi\|^2)=(-a^2-K)\|\psi\|^2+2a\Re\langle X(\psi),\psi\rangle_{\C^{n}}\]
thus
\[\|(X+A+\Phi)(\psi)-a\psi\|^2_{\C^n}=\|(X+A+\Phi)(\psi)\|^2_{\C^n}-K\|\psi\|_{\C^n}^2-X(a\|\psi\|_{\C^n}^2).\]
Integrating this equality with respect to $d\Sigma^3$ and using that $\psi$ vanishes on $\partial(SM)$ we obtain
\[|(X+A+\Phi)(\psi)|^2-\langle K\,\psi,\psi\rangle=|(X+A+\Phi)(\psi)-a\psi|^2\geq 0.\]
\end{proof}

For the remainder of this section we will assume that $\Phi\equiv 0$. This will expose the underlying argument for injectivity of $I_{A}$ more clearly. The Higgs field will be reinstated in the next section and a more involved argument will be needed to deal with it.

The next theorem is one of the key
technical results. Given a smooth function $f:SM\to\C^n$ we consider its Fourier expansion
\[f(x,v)=\sum_{k=-\infty}^{\infty}f_{k},\]
where $f_{k}\in\Omega_k$.

\begin{Theorem} Let $f:SM\to\C^n$ be a smooth function.
Suppose $u:SM\to\C^n$ satisfies
\[\left\{\begin{array}{ll}
X(u)+Au=-f,\\
u|_{\partial(SM)}=0.\\
\end{array}\right.\]
Then if $f_k=0$ for all $k\leq -2$ and $i\star F_{A}(x)$ is a negative definite Hermitian matrix for all $x\in M$, the function $u$ must be holomorphic.
Moreover, if $f_{k}=0$ for all $k\geq 2$ and $i\star F_{A}(x)$ is a positive definite Hermitian matrix for all $x\in M$, the function $u$ must be antiholomorphic.
\label{thm:pi}
\end{Theorem}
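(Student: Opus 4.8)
The plan is to derive the holomorphicity (resp.~antiholomorphicity) of $u$ from the Energy identity of Lemma~\ref{lemma:pestov} by combining it with Lemmas~\ref{lemma:easy} and~\ref{lemma:nonconj} and then controlling the curvature term via a Fourier decomposition. First I would observe that with $\Phi\equiv 0$ the identity reads
\[|(X+A)(Vu)|^2-\langle K\,Vu,Vu\rangle-\langle \star F_A u,Vu\rangle = |V[(X+A)u]|^2-|(X+A)u|^2.\]
Since $(X+A)u=-f$ and $f_k=0$ for $k\leq -2$, the right-hand side is $|V f|^2-|f|^2$. Here is where the Fourier expansion enters: writing $f=\sum_{k\geq -1}f_k$, the odd term $f_{-1}$ together with $f_1$ forms a $1$-form and contributes nothing to $|Vf|^2-|f|^2$, whereas all components $f_k$ with $k\geq 1$ contribute $(k^2-1)|f_k|^2\geq 0$ — actually the cleanest route is to note that $|Vf|^2-|f|^2=\sum_k(k^2-1)|f_k|^2$, which is $\geq 0$ precisely because there is no $f_{-1}$-versus-nothing imbalance once $f_k=0$ for $k\leq -2$; the terms $k=\pm1$ cancel against a putative negative contribution and $k=0$ gives $-|f_0|^2$. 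So in fact the right side equals $\sum_{k\geq 1}(k^2-1)|f_k|^2 - |f_0|^2$. Hmm — this is not obviously signed, so instead I would follow the sharper bookkeeping: the right side is $\leq \sum_{k\geq 1}(k^2-1)|f_k|^2$ only if $f_0$ helps, which it does not. Let me restructure.

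The correct strategy is to split $f$ into its "good" part (the $0$-form plus $1$-form that one is allowed to have in the kernel — but here $f$ is arbitrary subject only to $f_k=0$, $k\leq -2$) and apply the identity to conclude. Concretely: by Lemma~\ref{lemma:nonconj} the left side is bounded below by $-\langle \star F_A u,Vu\rangle$, and by the hypothesis that $i\star F_A(x)$ is negative definite Hermitian, $-\langle \star F_A u,Vu\rangle = \langle i\star F_A\,(iVu\text{-type term})\ldots\rangle$ — more carefully, $-\langle \star F_A u, Vu\rangle = -\int_{SM}\langle \star F_A u, Vu\rangle_{\C^n}$, and since $\star F_A = -i(i\star F_A)$ with $i\star F_A$ negative definite Hermitian, one manipulates this into a manifestly nonpositive-or-nonnegative expression. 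The key algebraic point I would isolate is: for $u=\sum_k u_k$, $\langle \star F_A u, Vu\rangle = \sum_k k\langle \star F_A u_k, iu_k\rangle\cdot(-i)$... the upshot is that $-\langle\star F_A u,Vu\rangle = -\sum_{k} k \langle i\star F_A u_k, u_k\rangle$ (up to a sign I would pin down), so with $i\star F_A$ negative definite this equals $\sum_k k\,|u_k|^2_{F}$ where $|\cdot|_F$ is a genuine norm; this is $\geq 0$ iff the negative Fourier modes $u_k$, $k<0$, vanish — which is exactly holomorphicity, but that is what we want to prove, so this term is not a priori signed either.

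Therefore the real argument must be a contradiction/bootstrapping argument: assuming $u$ is not holomorphic, move everything to one side. Combining Lemma~\ref{lemma:nonconj} (left side $\geq$ curvature term alone in absolute terms) with the identity and the computation of the right side via Fourier modes, one gets
\[0\le |(X+A)(Vu)|^2-\langle K Vu,Vu\rangle = \langle \star F_A u,Vu\rangle + \sum_{k\ge1}(k^2-1)|f_k|^2 - |f_0|^2.\]
Writing $-\langle \star F_A u, Vu\rangle=\sum_k (-k)\langle i\star F_A u_k,u_k\rangle$, negative definiteness of $i\star F_A$ makes $\langle i\star F_A u_k,u_k\rangle \le -c|u_k|^2<0$, so $-\langle\star F_A u,Vu\rangle$ has the sign of $\sum_k(-k)(-c|u_k|^2)=\sum_k k\,c|u_k|^2$, i.e.\ $\langle \star F_A u,Vu\rangle \le -c\sum_k k|u_k|^2 = -c\sum_{k\ge1}k|u_k|^2 + c\sum_{k\le-1}|k||u_k|^2$. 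The negative-mode part $c\sum_{k\le -1}|k||u_k|^2$ is the obstruction. But now invoke $f=-(X+A)u$ and compare Fourier modes: $(X+A)u = (\eta_+ + \eta_- + A_1 + A_{-1})u$, so $f_{-1} = \eta_-(u_0)+\eta_+(u_{-2})+A_{-1}u_0+A_1 u_{-2}$ and $f_k=0$ for $k\le -2$ forces recursive relations $\eta_-(u_{k})+\eta_+(u_{k-2})+\ldots=0$ for the negative modes, which is an energy estimate on the negative tail. Feeding the sign information back, the negative-mode curvature contribution is dominated, forcing $u_k=0$ for all $k<0$. I expect \textbf{the main obstacle} to be precisely this last bootstrapping: turning the indefinite Energy identity into a definite inequality by carefully pairing the curvature term's negative-mode contribution against the vanishing of $f_k$ for $k\le -2$, so that the whole right side becomes $\le 0$ while the left side is $\ge 0$, yielding $(X+A)(Vu)=aVu$ and $Vu=0$ on negative modes. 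The antiholomorphic case follows by replacing $A$ by its conjugate / applying the same argument to $\bar u$, exchanging the roles of $\eta_+$ and $\eta_-$ and of positive/negative modes, using that $i\star F_A>0$ instead.
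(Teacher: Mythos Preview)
Your proposal has a genuine gap: you apply the Energy identity to $u$ itself, and as you discover, neither the right-hand side $|Vf|^2-|f|^2=\sum_k(k^2-1)|f_k|^2$ nor the curvature term $-\langle\star F_A u,Vu\rangle=-\sum_k k\langle i\star F_A u_k,u_k\rangle$ is sign-definite in this situation. The ``bootstrapping'' you sketch at the end, pairing the negative-mode curvature contribution against recursive relations coming from $f_k=0$ for $k\le -2$, is not carried out and there is no clear mechanism by which it could close; the positive-mode curvature contributions and the $-|f_0|^2$ term simply do not go away.

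The missing idea is to apply the Energy identity not to $u$ but to its non-positive part $v:=(\id-i\H)u=u_0+2\sum_{k\le -1}u_k$. The commutator formula of Lemma~\ref{lemma:bracket}, together with the hypothesis $f_k=0$ for $k\le -2$ (which gives $(\id-i\H)f=f_0+2f_{-1}$), shows that $(X+A)v$ is the sum of a $0$-form and a $1$-form. Then Lemma~\ref{lemma:easy} makes the right side of the Energy identity $\le 0$, Lemma~\ref{lemma:nonconj} makes the first two left-side terms $\ge 0$, and since $v_k=0$ for $k>0$ the curvature term becomes
\[
\langle\star F_A v,V(v)\rangle=-4\sum_{k\le -1}k\langle i\star F_A u_k,u_k\rangle,
\]
which is $\le 0$ by negative definiteness of $i\star F_A$. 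All three inequalities are now forced to be equalities, yielding $u_k=0$ for $k<0$. The projection onto the antiholomorphic part is precisely what makes both obstructions you encountered disappear simultaneously.
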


\begin{proof} Let us assume that $f_{k}=0$ for $k\leq -2$ and $i\star F_{A}$
is a negative definite Hermitian matrix; the proof of the other claim is
similar.

We need to show that $(\id-i\H)u$ only depends on $x$. We apply $X+A$ to it and use Lemma \ref{lemma:bracket} together with $(\id-i\H)f=f_0+2f_{-1}$ to derive:
\begin{align*}
(X+A)[(\id-i\H)u]&=-f-i(X+A)(\H u)\\
&=-f-i(\H((X+A)(u))-(\X+\star A)(u_{0})-\{(\X+\star A)(u)\}_{0})\\
&=-(\id-i\H)(f)+i(\X+\star A)(u_{0})+i\{(\X+\star A)(u)\}_{0}\\
&=-f_0-2f_{-1}+i(\X+\star A)(u_{0})+i\{(\X+\star A)(u)\}_{0}\\
&=F(x)+\alpha_{x}(v),\\
\end{align*}
where $F:M\to \C^n$ and $\alpha$ is a $\C^n$-valued 1-form.
Now we are in good shape to use the Energy identity from Lemma \ref{lemma:pestov}. We will apply it to
$v=(\id-i\H)u=u_0+2\sum_{k=-\infty}^{-1} u_{k}$.
We know from Lemma \ref{lemma:easy} that its right hand side is $\leq 0$ and using Lemma \ref{lemma:nonconj}
we deduce
\[\langle \star F_{A}v,V(v)\rangle \geq 0.\]
But on the other hand
\[\langle \star F_{A}v,V(v)\rangle=-4\sum_{k=-\infty}^{-1}k\langle i\star F_{A}u_{k},u_{k}\rangle\]
and since $i\star F_{A}$ is negative definite this forces $u_{k}=0$ for all $k<0$.

\end{proof}

We are now ready to prove the main result on injectivity of $I_{A}$.

\begin{Theorem} Let $M$ be a compact simple surface. Assume that $f:SM\to\C^n$ is a smooth function of the form
$F(x)+\alpha_{j}(x)v^j$, where $F:M\to\C^n$ is a smooth function
and $\alpha$ is a $\C^n$-valued 1-form. If $I_{A}(f)=0$, then
$F\equiv 0$ and $\alpha=d_{A}p$, where $p:M\to\C^n$ is a smooth
function with $p|_{\partial M}=0$.
\label{thm:injective}
\end{Theorem}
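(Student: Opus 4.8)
The plan is to deduce Theorem \ref{thm:injective} from Theorem \ref{thm:pi} by exploiting the gauge freedom established in Section \ref{scatt}, together with the holomorphic integrating factors from Theorem \ref{prop_holomorphic_integrating_factors}. The obstacle is that Theorem \ref{thm:pi} requires the curvature term $i\star F_A$ to have a definite sign, which is not true for a general unitary connection $A$; the resolution is to conjugate $A$ by a (fiberwise) holomorphic gauge transformation so that the effective curvature becomes, say, negative definite, while keeping track of how the hypothesis $I_A(f)=0$ transforms.

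First I would set $u = u^f$, the solution of $Xu + Au = -f$ with $u|_{\partial_-(SM)} = 0$. Since $I_A(f) = u|_{\partial_+(SM)} = 0$, the function $u$ in fact vanishes on all of $\partial(SM)$, and by Proposition \ref{prop:smooth} it is smooth on $SM$. Next, apply Theorem \ref{prop_holomorphic_integrating_factors} in the scalar case: because the curvature obstruction is what forces the definiteness hypothesis, I would instead use the integrating factors to adjust $A$. Concretely, one seeks a holomorphic $w \in C^\infty(SM,\C^{n\times n})$ (or rather works entry-by-entry / uses the matrix version) producing a gauge $Q = e^{w}$ depending on $x$ and $v$ such that the conjugated connection-type operator $Q^{-1}(X+A)Q$ has the form $X + \tilde A + \tilde\Phi$ with curvature $i\star F_{\tilde A}$ negative definite. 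This is precisely the mechanism flagged in the introduction ("holomorphic gauge transformations which arrange the curvature of the connection to have definite sign") and in the remark after the gauge-invariance discussion in Section \ref{scatt}. The key input is that adding a suitable holomorphic term shifts the relevant Fourier components of the curvature without destroying smoothness, and simplicity of $M$ guarantees the integrating factors exist globally.

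Once the connection has been gauge-transformed so that $i\star F_{\tilde A}$ is negative definite, the transformed right-hand side $\tilde f = Q^{-1}f$ still has only Fourier modes $\tilde f_k$ with $k \geq -1$ (since $f = F(x) + \alpha_j(x)v^j$ has modes only in $\{-1,0,1\}$ and multiplying by the holomorphic $Q^{-1}$ cannot create modes below $-1$), so Theorem \ref{thm:pi} applies and shows $Q^{-1}u$ is holomorphic. Running the same argument with an antiholomorphic integrating factor and positive definite curvature shows (after the corresponding antiholomorphic conjugation) that $u$ is, up to gauge, antiholomorphic as well. Comparing Fourier coefficients — exactly as in the last paragraph of the proof of Theorem \ref{prop_holomorphic_integrating_factors} — forces $u$ to be gauge-equivalent to a function of $x$ alone, i.e. $u = p(x)$ for a smooth $p: M \to \C^n$; and $u|_{\partial(SM)} = 0$ gives $p|_{\partial M} = 0$.

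Finally I would read off the conclusion from the transport equation itself. If $u = p(x)$ then $Xu = dp(x)\cdot v$ is purely a $1$-form in $v$, $Au$ is also a $1$-form, so $Xu + Au = d_A p$ and this must equal $-f = -(F(x) + \alpha_j(x)v^j)$; matching the $0$-form part gives $F \equiv 0$ and matching the $1$-form part gives $\alpha = -d_A p$ (absorbing the sign into $p$). The main obstacle, as noted, is justifying the existence and the precise effect of the matrix-valued holomorphic gauge transformation that makes $i\star F_{\tilde A}$ negative definite while preserving the Fourier support condition on $\tilde f$ and the smoothness of everything in sight; this is where the careful analysis combining Theorem \ref{prop_holomorphic_integrating_factors}, the commutator Lemma \ref{lemma:bracket}, and the curvature formula \eqref{eq:curv} must be carried out.
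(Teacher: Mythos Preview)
Your overall strategy is the right one and matches the paper: use Proposition \ref{prop:smooth} to get a smooth $u$ vanishing on $\partial(SM)$, conjugate by a holomorphic integrating factor to put yourself in the situation of Theorem \ref{thm:pi}, conclude $u$ is holomorphic, repeat with antiholomorphic data to get $u=u_0(x)$, and read off the result.

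The genuine gap is that you are looking for a \emph{matrix-valued} holomorphic gauge $Q=e^{w}$ with $w\in C^\infty(SM,\C^{n\times n})$, and you correctly flag at the end that you do not know how to produce one. Theorem \ref{prop_holomorphic_integrating_factors} is proved only in the scalar case $n=1$, and there is no matrix version available in the paper. The paper sidesteps this completely: the modification is by a \emph{scalar} connection. One writes $\omega_g=d\varphi$ for a real $1$-form $\varphi$ (possible since $M$ is a disk), sets $A_s:=A-is\varphi\,\id$, and observes that $i\star F_{A_s}=i\star F_A+s\,\id$, which is negative definite for $s\ll 0$ and positive definite for $s\gg 0$. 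The integrating factor is then $e^{sw}$ with $w$ a \emph{scalar} holomorphic solution of $Xw=i\varphi$, furnished directly by Theorem \ref{prop_holomorphic_integrating_factors}. Because $e^{sw}$ is a scalar, it commutes with everything, $u_s:=e^{sw}u$ solves $(X+A_s)u_s=-e^{sw}f$, and $e^{sw}f$ still has no Fourier modes below $-1$.

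A second, related imprecision: you conclude that ``$u$ is gauge-equivalent to a function of $x$ alone.'' That is not enough, and in the paper's argument it is not what happens. Since the conjugating factor $e^{sw}$ is itself holomorphic, holomorphy of $u_s$ gives holomorphy of $u=e^{-sw}u_s$ outright; likewise the antiholomorphic run gives that $u$ itself is antiholomorphic. Hence $u=u_0(x)$ on the nose, and the transport equation $(X+A)u_0=-f$ immediately yields $F\equiv 0$ and $\alpha=d_A p$ with $p=-u_0$.
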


\begin{proof}
Consider the area form $\omega_g$ of the metric $g$. Since $M$ is a disk
there exists a smooth 1-form $\varphi$ such that $\omega_{g}=d\varphi$.
Given $s\in\mathbb R$, consider the Hermitian connection
\[A_{s}:=A-is\varphi\,\id.\]
Clearly its curvature is given by
\[F_{A_{s}}=F_{A}-is\omega_{g}\id\]
therefore
\[i\star F_{A_{s}}=i\star F_{A}+s\id,\]
from which we see that there exists $s_0>0$ such that
 for $s>s_0$, $i\star F_{A_{s}}$ is positive definite
and for $s<-s_0$, $i\star F_{A_{s}}$ is negative definite.

Since $I_{A}(f)=0$, Proposition \ref{prop:smooth} implies that
there is a {\it smooth} $u:SM\to\C^n$ such that $(X+A)(u)=-f$ and
$u|_{\partial(SM)}=0$ (to abbreviate the notation we write $u$ instead of
$u^f$).

Let $e^{sw}$ be an integrating factor of $-is\varphi$. In other words
$w:SM\to\C$ satisfies $X(w)=i\varphi$. By Theorem \ref{prop_holomorphic_integrating_factors} we know we can choose 
$w$ to be holomorphic or antiholomorphic, this will be crucial below.
Observe now that $u_{s}:=e^{sw}u$ satisfies $u_{s}|_{\partial(SM)}=0$ and
solves
\[(X+A_{s})(u_{s})=-e^{sw}f.\]

Choose $w$ to be holomorphic. Since $f=F(x)+\alpha_{j}(x)v^j$, the function
$e^{sw}f$ has the property that its Fourier coefficients $(e^{sw}f)_{k}$
vanish for $k\leq -2$.
Choose $s$ such that $s<-s_0$ so that
$i\star F_{A_{s}}$ is negative definite. Then Theorem \ref{thm:pi} implies
that $u_s$ is holomorphic and thus $u=e^{-sw}u_{s}$ is also holomorphic.

Choosing $w$ antiholomorphic and $s>s_0$ we show similarly that $u$ is
antiholomorphic. This implies that $u=u_0$ which together with
$(X+A)u=-f$, gives $d_{A}u_{0}=-f$. If we set $p=-u_0$ we see right away
that $F\equiv 0$ and $\alpha=d_{A}p$ as desired.
\end{proof}

\section{Injectivity in the presence of a Higgs field} \label{sec:injective_higgs}

We now extend the injectivity result in the previous section to the case where a Higgs field is included. This proves Theorem \ref{thm:injective_higgs} from the Introduction, restated here:

\begin{Theorem} Let $M$ be a compact simple surface. Assume that $f:SM\to\C^n$ is a smooth function of the form
$F(x)+\alpha_{j}(x)v^j$, where $F:M\to\C^n$ is a smooth function
and $\alpha$ is a $\C^n$-valued 1-form. Let also $A: SM \to \mathfrak{u}(n)$ be a unitary connection and $\Phi: M \to \mathfrak{u}(n)$ a skew-Hermitian matrix function. If $I_{A,\Phi}(f)=0$, then
$F = \Phi p$ and $\alpha=d_{A}p$, where $p:M\to\C^n$ is a smooth
function with $p|_{\partial M}=0$.
\label{thm:injective_higgs2}
\end{Theorem}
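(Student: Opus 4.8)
The plan is to mimic the proof of Theorem \ref{thm:injective} but to carry the Higgs field along through the holomorphic gauge trick, using the full strength of the Energy identity (Lemma \ref{lemma:pestov}) which already includes all the $\Phi$-terms. First I would invoke Proposition \ref{prop:smooth}: since $I_{A,\Phi}(f)=0$, the solution $u=u^f$ of $(X+A+\Phi)u=-f$ with $u|_{\partial(SM)}=0$ is smooth on all of $SM$. Next, exactly as before, write $\omega_g=d\varphi$ (possible since $M$ is a disk) and set $A_s:=A-is\varphi\,\id$, so that $i\star F_{A_s}=i\star F_A+s\,\id$; choosing $w$ holomorphic with $Xw=i\varphi$ (Theorem \ref{prop_holomorphic_integrating_factors}) and $u_s:=e^{sw}u$, one gets $(X+A_s+\Phi)(u_s)=-e^{sw}f$ with $u_s|_{\partial(SM)}=0$, and since $e^{sw}f$ has Fourier coefficients vanishing in degrees $k\le -2$ (as $f$ is a $0$-form plus $1$-form and $w$ is holomorphic), and $i\star F_{A_s}$ is negative definite for $s\ll 0$, one would like to conclude $u_s$ is holomorphic. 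The symmetric choice ($w$ antiholomorphic, $s\gg0$) should then give $u$ antiholomorphic, hence $u=u_0$, whence $(X+A+\Phi)u_0=-f$ expands (looking at the $k=0,\pm1$ Fourier modes) to $\alpha=d_A(-u_0)$, $F=\Phi(-u_0)$, i.e. $p=-u_0$ works.

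\textbf{Where the extra work lies.} The main obstacle is that Theorem \ref{thm:pi} was proved only for $\Phi\equiv 0$, and with a Higgs field the Energy identity picks up the two new terms $-\Re\langle(\star d_A\Phi)u,Vu\rangle$ and $-\Re\langle\Phi u,(X+A+\Phi)u\rangle$. So I would need to prove the analogue of Theorem \ref{thm:pi} with $\Phi$ present: apply Lemma \ref{lemma:pestov} to $v:=(\id-i\H)u_s$ (after the gauge transformation, so with connection $A_s$), where now $(X+A_s+\Phi)v$ is again a $0$-form plus $1$-form by the computation using Lemma \ref{lemma:bracket} (the bracket lemma already includes $\Phi$, and $[\H,\Phi]=0$). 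By Lemma \ref{lemma:easy} the right-hand side of the Energy identity is $\le 0$, and Lemma \ref{lemma:nonconj} disposes of the $|(X+A+\Phi)(Vv)|^2-\langle KVv,Vv\rangle$ term. This leaves the inequality
\[
\langle \star F_{A_s}v,V(v)\rangle+\Re\langle(\star d_{A_s}\Phi)v,V(v)\rangle+\Re\langle\Phi v,(X+A_s+\Phi)v\rangle\ge 0.
\]
Writing $v=u_0+2\sum_{k\le -1}u_k$ and using $\langle\star F_{A_s}v,Vv\rangle=-4\sum_{k\le-1}k\langle i\star F_{A_s}u_k,u_k\rangle$, the curvature term is bounded below by $4s\sum_{k\le-1}|k|\,\|u_k\|^2$ modulo a $\Phi$-independent constant times $\sum\|u_k\|^2$. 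The point is that the two Higgs terms must be shown to be dominated by the (large, positive) curvature term: $\star d_{A_s}\Phi=\star d_A\Phi$ is $s$-independent (the $is\varphi\,\id$ part is central so drops out of the bracket), so $\Re\langle(\star d_A\Phi)v,Vv\rangle$ is bounded by $C\|v\|\,\|Vv\|\le C'\sum_{k}|k|\,\|u_k\|^2$ with $C'$ independent of $s$; and the term $\Re\langle\Phi v,(X+A_s+\Phi)v\rangle$ — since $\Phi$ is skew-Hermitian and $e^{sw}$ commutes appropriately — should likewise be controllable. For $|s|$ large enough the genuinely positive curvature contribution swamps these, forcing $u_k=0$ for $k<0$.

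\textbf{The delicate point.} The term I expect to be genuinely awkward is $\Re\langle\Phi v,(X+A_s+\Phi)v\rangle$, because $(X+A_s+\Phi)v$ is only known to be a $0$-form plus $1$-form rather than something small, so the naive bound gives $C\|v\|^2$ with no smallness. One fix is to observe that $\Phi$ is skew-Hermitian, so $\Re\langle\Phi v, \Phi v\rangle$ vanishes from that inner product's real part in the relevant combination, and $\Re\langle\Phi v,(X+A_s)v\rangle$ can be re-expressed, via Lemma \ref{lemma:anti} and the skew-Hermitian property, as something that either cancels or is again of order $\sum|k|\|u_k\|^2$ uniformly in $s$. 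I would work this out carefully; alternatively, one can first absorb $\Phi$ into an $x$-dependent-only modification and argue that, because the surviving uniform-in-$s$ bound on all the $\Phi$-terms is linear in $\|Vv\|^2=4\sum k^2\|u_k\|^2$ while the curvature term is linear in $\sum|k|\|u_k\|^2$ — wait, that exponent mismatch is in the wrong direction, so the correct route is to bound the $\Phi$-terms by $C\sum\|u_k\|^2$ (no $|k|$ weight, using that $\Phi$ and $d_A\Phi$ are bounded multiplication operators and estimating crudely) and note $\sum_{k\le-1}|k|\|u_k\|^2\ge\sum_{k\le-1}\|u_k\|^2$, so $4s\sum|k|\|u_k\|^2-C\sum\|u_k\|^2\ge(4s-C)\sum_{k\le-1}\|u_k\|^2$, which is $\ge 0$ only if every $u_k=0$ for $k<0$ once $4|s|>C$. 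Reconciling the weighted versus unweighted estimates is the crux; once that bookkeeping is settled the theorem follows exactly as in Theorem \ref{thm:injective}.
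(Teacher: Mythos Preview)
Your overall plan is exactly the paper's: pass to $u_s=e^{sw}u$, apply the full Energy identity to the antiholomorphic part of $u_s$, and use the large curvature term from $A_s$ to force the negative Fourier modes to vanish. You also correctly isolate the one genuinely hard term, $\Re\langle\Phi v,(X+A_s+\Phi)v\rangle$. The gap is in how you propose to bound it.

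First, two small errors. The claim ``$\Re\langle\Phi v,\Phi v\rangle$ vanishes'' is false: this is $|\Phi v|^2\ge 0$. (Perhaps you were thinking of $\Re\langle\Phi v,v\rangle=0$.) Also, the paper works with $v=(\id-i\H)u_s-(u_s)_0$, not $(\id-i\H)u_s$; dropping the zero mode is what makes $v_k=0$ for all $k\ge 0$, so that the pairing $\langle\Phi v,(X+A_s+\Phi)v\rangle$ reduces to the single mode $\langle\Phi v_{-1},((X+A_s+\Phi)v)_{-1}\rangle$.

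The substantive gap is the ``crude bound'' you suggest. It does \emph{not} give an $s$-independent constant. Writing $A_s=a_1+a_{-1}$ with $a_{\pm1}=A_{\pm1}+is\varphi_{\pm1}\id$, the $(-1)$-mode of $(X+A_s+\Phi)v$ is $\eta_+v_{-2}+a_1v_{-2}+\Phi v_{-1}$, and the piece $\langle\Phi v_{-1},is\varphi_1 v_{-2}\rangle$ is real and grows linearly in $|s|$; Cauchy--Schwarz on this term gives $|s|\,C\|v\|^2$, which the curvature term (also linear in $s$) cannot absorb. An integration-by-parts using skew-adjointness of $X+A_s+\Phi$ only controls the \emph{imaginary} part of $\langle\Phi v,(X+A_s+\Phi)v\rangle$, not the real part.

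What the paper does instead is a telescoping argument in the vertical Fourier modes. Using the equations
\[
\eta_+v_{-k-1}+a_1v_{-k-1}+\eta_-v_{-k+1}+a_{-1}v_{-k+1}+\Phi v_{-k}=0,\qquad k\ge 2,
\]
and integrating by parts with $\eta_\pm$, one writes $\Re\langle\Phi v_{-1},((X+A_s+\Phi)v)_{-1}\rangle=p_N+q_N$ with $p_N\to 0$. In $q_N$ the $a_1$- and $a_{-1}$-contributions pair up (after using $a_1^*=-a_{-1}$, $\Phi^*=-\Phi$) into terms $\langle[a_{-1},\Phi]v_{-j},v_{-j-1}\rangle$. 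The key point---which your sketch does not identify---is that $is\varphi_{-1}\id$ is a \emph{scalar} multiple of the identity, so $[a_{-1},\Phi]=[A_{-1},\Phi]$ is \emph{independent of $s$}. This is precisely what produces the uniform estimate $|q|\le C_{A,\Phi}\sum_k|v_k|^2$ and lets the curvature term (with its genuine factor of $s$) dominate. Without this commutator mechanism the bookkeeping you allude to cannot be ``settled''.
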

\begin{proof}
Since $I_{A,\Phi}(f) = 0$, there is a function $u: SM \to \C^n$ with $u|_{\partial (SM)} = 0$ satisfying 
$$
(X+A+\Phi)u = -f.
$$
It follows from Proposition \ref{prop:smooth} that $u \in C^{\infty}(SM,\C^n)$. We will prove that $u$ is both holomorphic and antiholomorphic. If this is the case then $u=u_0$ only depends on $x$ and $u_0|_{\partial M} = 0$, and we have 
$$
du_0 + Au_0 = -\alpha, \quad \Phi u_0 = -F.
$$
The theorem follows upon choosing $p = -u_0$.

It remains to show that $u$ is holomorphic (the antiholomorphic case follows by an analogous argument). The first step, as in the proof of Theorem \ref{thm:injective}, is to replace $A$ by a connection whose curvature has a definite sign. We choose a real valued $1$-form $\varphi$ such that $d\varphi = \omega_g$, and let 
$$
A_s := A + is\varphi \id.
$$
Here $s > 0$ so that $A_s$ is Hermitian and $i \star F_{A_s} = i \star F_A - s\id$. We use Theorem \ref{prop_holomorphic_integrating_factors} to find a holomorphic scalar function $w \in C^{\infty}(SM)$ satisfying $Xw = -i\varphi$. Then $u_s = e^{sw} u$ satisfies 
$$
(X + A_s + \Phi)u_s = -e^{sw} f.
$$
Let $v = (\id-i\hilb)u_s - (u_s)_0$. The commutator formula in Lemma \ref{lemma:bracket} shows that 
\begin{multline*}
(X+A_s+\Phi)v = (\id-i\hilb)((X+A_s+\Phi)u_s) + i[\hilb,X+A_s]u_s - (X+A_s+\Phi)(u_s)_0 \\
 = -(\id-i\hilb)(e^{sw}f) + i(X_{\perp} + \star A_s)(u_s)_0 + i\left\{ (X_{\perp} + \star A_s) u_s \right\}_0 - (X+A_s+\Phi)(u_s)_0.
\end{multline*}
Here $(\id-i\hilb)(e^{sw} f)$ is the sum of a $0$-form and a $1$-form since $w$ is holomorphic. Consequently 
$$
(X+A_s+\Phi)v = -h(x) - \beta_j(x)v^j
$$
where $h \in C^{\infty}(M,\C^n)$ and $\beta$ is a $1$-form.

Applying the Energy identity in Lemma \ref{lemma:pestov} with connection $A_s$ to the function $v$, which also satisfies $v|_{\partial(SM)} = 0$, we see that 
\begin{multline}
|(X+A_s+\Phi)(Vv)|^2 - \langle K\,V(v),V(v)\rangle + |(X+A_s+\Phi)v|^2 - |V[(X+A_s+\Phi)v]|^2  \\
 - \langle \star F_{A_s}v,V(v)\rangle - \Re\langle (\star d_{A_s}\Phi) v,V(v)\rangle - \Re\langle\Phi v,(X+A_s+\Phi)v\rangle = 0. \label{pestov_higgs_identity}
\end{multline}
It was proved in Lemmas \ref{lemma:easy} and \ref{lemma:nonconj} that 
\begin{gather}
|(X+A_s+\Phi)(Vv)|^2 - \langle K\,V(v),V(v)\rangle \geq 0, \label{pestov_higgs_intermediate1}\\
|(X+A_s+\Phi)v|^2 - |V[(X+A_s+\Phi)v]|^2 = |h|^2 \geq 0. \label{pestov_higgs_intermediate2}
\end{gather}
The term involving the curvature of $A_s$ satisfies 
\begin{equation} \label{pestov_higgs_intermediate3}
-\langle \star F_{A_s}v,V(v)\rangle = \sum_{k=-\infty}^{-1} |k| \langle -i\star F_{A_s}v_{k},v_{k}\rangle \geq (s-\lVert F_A \rVert_{L^{\infty}(M)}) \sum_{k=-\infty}^{-1} |k| |v_k|^2.
\end{equation}
Here we can choose $s > 0$ large to obtain a positive term. For the next term in \eqref{pestov_higgs_identity}, we consider the Fourier expansion of $d_{A_s} \Phi = d_A \Phi = a_1 + a_{-1}$ where $a_{\pm 1}\in\Omega_{\pm 1}$.
Note that $\star d_{A}\Phi=-V(d_{A}\Phi)=-ia_{1}+ia_{-1}$. Then, since $v_k = 0$ for $k \geq 0$, 
\begin{align*}
\langle (\star d_A \Phi) v,V(v)\rangle &= \sum_{k=-\infty}^{-1} \langle -ia_1 v_{k-1} + ia_{-1} v_{k+1}),  ik v_k \rangle \\
 &=  \sum_{k=-\infty}^{-1} |k| \left[ \langle a_1 v_{k-1}, v_k \rangle - \langle a_{-1} v_{k+1}, v_k \rangle \right].
\end{align*}
Consequently 
\begin{equation} \label{pestov_higgs_intermediate4}
\Re\langle (\star d_A \Phi) v,V(v)\rangle \leq C_{A,\Phi} \sum_{k=-\infty}^{-1} |k| |v_k|^2.
\end{equation}

The last term in \eqref{pestov_higgs_identity} requires the most work. We note that $v_k=0$ for $k \geq 0$ and that $(X+A_s+\Phi)v$ is the sum of a $0$-form and a $1$-form. Therefore 
\begin{align*}
\langle\Phi v,(X+A_s+\Phi)v\rangle = \langle\Phi v_{-1},((X+A_s+\Phi)v)_{-1}\rangle.
\end{align*}
Recall from Section \ref{sec:prelim} that we may write $X = \eta_+ + \eta_-$ where $\eta_+ = (X+iX_{\perp})/2: \Omega_k \to \Omega_{k+1}$ and $\eta_- = (X-iX_{\perp})/2: \Omega_k \to \Omega_{k-1}$. Expand $A=A_{1}+A_{-1}$ and
$\varphi=\varphi_{1}+\varphi_{-1}$ so that $A_{s}=(A_{1}+is\varphi_{1}\id)+
(A_{-1}+si\varphi_{-1}\id):= a_1 + a_{-1}$ where $a_j \in \Omega_j$.
Since $A_s$ is Hermitian we have $a_{\pm 1}^* = -a_{\mp 1}$.

The equation $(X+A_s+\Phi)v = -h(x) - \beta_j(x)v^j$ implies that 
\begin{gather*}
\eta_+ v_{-2} + a_1 v_{-2} + \Phi v_{-1} = ((X+A_s+\Phi)v)_{-1}, \\
\eta_+ v_{-k-1} + a_1 v_{-k-1} + \eta_- v_{-k+1} + a_{-1} v_{-k+1} + \Phi v_{-k} = 0, \qquad k \geq 2.
\end{gather*}
Note that $\langle \eta_{\pm} a,b \rangle = -\langle a, \eta_{\mp} b \rangle$ when $a|_{\partial(SM)} = 0$; this is proved exactly as in Lemma \ref{lemma:anti}. Using this and that $\Phi$ is skew-Hermitian, we have 
\begin{align*}
 &\Re \langle\Phi v_{-1},((X+A_s+\Phi)v)_{-1}\rangle = \Re \langle\Phi v_{-1},\eta_+ v_{-2} + a_1 v_{-2} + \Phi v_{-1} \rangle \\
 &= \Re \left[ \langle \eta_{-} v_{-1}, \Phi v_{-2} \rangle - \langle (\eta_- \Phi) v_{-1}, v_{-2} \rangle + \langle \Phi v_{-1}, a_1 v_{-2} \rangle + |\Phi v_{-1}|^2 \right].
\end{align*}
We claim that for any $N \geq 1$ one has 
$$
\Re \langle\Phi v_{-1},((X+A_s+\Phi)v)_{-1}\rangle = p_N + q_N
$$
where 
\begin{gather*}
p_N := (-1)^{N-1} \Re \langle \eta_{-} v_{-N}, \Phi v_{-N-1} \rangle, \\
q_N := \Re \sum_{j=1}^N \big[ (-1)^j \langle (\eta_- \Phi) v_{-j}, v_{-j-1} \rangle + (-1)^{j-1} \langle \Phi v_{-j}, a_1 v_{-j-1} \rangle + (-1)^{j-1} |\Phi v_{-j}|^2 \big] \\
 + \Re \sum_{j=1}^{N-1} (-1)^j \langle a_{-1} v_{-j}, \Phi v_{-j-1} \rangle.
\end{gather*}
We have proved the claim when $N=1$. If $N \geq 1$ we compute 
\begin{align*}
p_N &= (-1)^N \Re \langle (\eta_+ + a_1) v_{-N-2} + a_{-1} v_{-N} + \Phi v_{-N-1}, \Phi v_{-N-1} \rangle \\
 &= (-1)^N \Re \big[ \langle \Phi v_{-N-2}, \eta_- v_{-N-1} \rangle - \langle v_{-N-2}, (\eta_- \Phi) v_{-N-1} \rangle \\
 &\quad \quad \quad \quad \quad \quad + \langle a_1 v_{-N-2} + a_{-1} v_{-N} + \Phi v_{-N-1}, \Phi v_{-N-1} \rangle \big] \\
 &= p_{N+1} + q_{N+1} - q_N.
\end{align*}
This proves the claim for any $N$.

Note that since $|\eta_- v|^2 = \sum |\eta_- v_k|^2$, we have $\eta_- v_k \to 0$ and similarly $v_k \to 0$ in $L^2(SM)$ as $k \to -\infty$. Therefore $p_N \to 0$ as $N \to \infty$. We also have 
$$
|q_N| \leq C_{\Phi} \sum |v_k|^2 + \left\lvert \sum_{j=1}^N (-1)^j \langle [a_{-1},\Phi] v_{-j}, v_{-j-1} \rangle \right\rvert \leq C_{A,\Phi} \sum |v_k|^2.
$$
Here it was important that the term in $a_{-1}$ involving $s$ is a scalar, so it goes away when taking the commutator $[a_{-1},\Phi]$. After taking a subsequence, $(q_N)$ converges to some $q$ having a similar bound. We finally obtain 
\begin{equation} \label{pestov_higgs_intermediate5}
\Re\langle\Phi v,(X+A_s+\Phi)v\rangle = \lim_{N \to \infty} (p_N + q_N) \leq C_{A,\Phi} \sum |v_k|^2.
\end{equation}

Collecting the estimates \eqref{pestov_higgs_intermediate1}--\eqref{pestov_higgs_intermediate5} and using them in \eqref{pestov_higgs_identity} shows that 
$$
0 \geq |h|^2 + (s-C_{A,\Phi}) \sum_{k=-\infty}^{-1} |k| |v_k|^2.
$$
Choosing $s$ large enough implies $v_k = 0$ for all $k$. This proves that $u_s$ is holomorphic, and therefore $u = e^{-sw} u_s$ is holomorphic as required.
\end{proof}

\section{Scattering data determines the connection and the Higgs field} \label{sec:scattering_relation}

Given a Hermitian connection $A$ and skew-Hermitian Higgs field $\Phi$, let $C^{A,\Phi}_{+}:\partial_{+}(SM)\to U(n)$
denote the scattering data introduced in Section \ref{scatt}. (Recall that $C^{A,\Phi}_{+}$ and $C^{A,\Phi}_{-}$ are equivalent information once the scattering relation $\alpha$ for $(M,g)$ is known.) For brevity we omit the subscript $``+"$ in $C_{+}^{A,\Phi}$ from now on.

In this section we prove Theorem \ref{thm:inverse} from the Introduction. We restate it in the following form:

\begin{Theorem} Assume $M$ is a compact simple surface, let
$A$ and $B$ be two Hermitian connections, and let $\Phi$ and $\Psi$ be two skew-Hermitian matrix functions.
Then $C^{A,\Phi}=C^{B,\Psi}$ implies that there exists a smooth
$U:M\to U(n)$ such that $U|_{\partial M}=\id$ and
$B=U^{-1}dU+U^{-1}AU$, $\Psi = U^{-1} \Phi U$.
\end{Theorem}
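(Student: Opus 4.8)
The plan is to reduce the statement to the injectivity result of Theorem~\ref{thm:injective_higgs2}, applied to the connection and Higgs field induced on the endomorphism bundle $M\times\C^{n\times n}$. Let $U_{-}^{A,\Phi}$ and $U_{-}^{B,\Psi}$ be the matrix solutions of $XU_{-}^{A,\Phi}+(A+\Phi)U_{-}^{A,\Phi}=0$ and $XU_{-}^{B,\Psi}+(B+\Psi)U_{-}^{B,\Psi}=0$ with $U_{-}|_{\partial_{+}(SM)}=\id$, and set
\[ W:=U_{-}^{A,\Phi}\,\big(U_{-}^{B,\Psi}\big)^{-1}. \]
Differentiating along geodesics and using $X\big((U_{-}^{B,\Psi})^{-1}\big)=(U_{-}^{B,\Psi})^{-1}(B+\Psi)$ gives
\[ XW+(A+\Phi)W-W(B+\Psi)=0 \quad\text{in }SM, \]
that is $(X+\hat A+\hat\Phi)W=0$, where $\hat A$ acts on $\C^{n\times n}$-valued functions by $\hat A W=AW-WB$ and $\hat\Phi W=\Phi W-W\Psi$. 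One checks that $\hat A$ is a \emph{unitary} connection and $\hat\Phi$ a skew-Hermitian Higgs field on $M\times\C^{n\times n}$ for the Hilbert--Schmidt inner product $\langle S,T\rangle=\mathrm{tr}(S^{*}T)$, by a short computation using cyclicity of the trace and the (skew-)Hermiticity of $A,B,\Phi,\Psi$; also $W$ takes values in $U(n)$. On the boundary $W|_{\partial_{+}(SM)}=\id$, while on $\partial_{-}(SM)$ one has $W=C_{-}^{A,\Phi}\,(C_{-}^{B,\Psi})^{-1}$, which equals $\id$ precisely because $C^{A,\Phi}=C^{B,\Psi}$ (equivalently $C_{-}^{A,\Phi}=C_{-}^{B,\Psi}$, the two scattering data being interchangeable once $\alpha$ is known). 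Hence $W|_{\partial(SM)}=\id$.

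Next I would put $u:=W-\id$. Then $u|_{\partial(SM)}=0$ and
\[ (X+\hat A+\hat\Phi)u=-(X+\hat A+\hat\Phi)\id=-\big[(\Phi-\Psi)+(A-B)\big]=:-f, \]
so $f=F(x)+\alpha_{j}(x)v^{j}$ with $F=\Phi-\Psi$ a $\C^{n\times n}$-valued $0$-form and $\alpha=A-B$ a $\C^{n\times n}$-valued $1$-form. Since $u$ is continuous on $SM$, vanishes on $\partial(SM)$, and solves the transport ODE along every geodesic, uniqueness for the ODE gives $u=u^{f}$ for the attenuation $(\hat A,\hat\Phi)$; hence $I_{\hat A,\hat\Phi}(f)=u|_{\partial_{+}(SM)}=0$. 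By Proposition~\ref{prop:smooth}, $u=W-\id$ is then smooth on $SM$ (this is also immediate from Lemma~\ref{lemma:ss}, the scattering data of $(\hat A,\hat\Phi)$ being the identity). Now Theorem~\ref{thm:injective_higgs2}, applied on $M\times\C^{n\times n}$ (with $\C^{n}$ replaced by $\C^{n^{2}}$) with the unitary connection $\hat A$ and skew-Hermitian Higgs field $\hat\Phi$, shows that $u=u^{f}$ is fibrewise constant; hence $W=\id+u$ depends only on $x$.

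Finally set $U:=W$, a smooth map $M\to U(n)$ with $U|_{\partial M}=\id$. Inserting the fibrewise constant $W=U(x)$ into $(X+\hat A+\hat\Phi)W=0$ and noting that $XW$ and $\hat A W=AU-UB$ are fibrewise linear in $v$ while $\hat\Phi W=\Phi U-U\Psi$ is fibrewise constant, separating parities yields
\[ dU+AU-UB=0,\qquad \Phi U-U\Psi=0, \]
that is $B=U^{-1}dU+U^{-1}AU$ and $\Psi=U^{-1}\Phi U$, as claimed.

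The genuinely delicate points I expect are (i) verifying that the induced pair $(\hat A,\hat\Phi)$ on the endomorphism bundle is unitary and skew-Hermitian for the Hilbert--Schmidt inner product — this is what makes Theorem~\ref{thm:injective_higgs2} apply verbatim — and (ii) the smoothness of $W$ at the glancing set $S(\partial M)$, which is supplied by the regularity theory of Section~\ref{sec:regularity}. Everything else is bookkeeping that recasts the problem as a direct application of the injectivity theorem.
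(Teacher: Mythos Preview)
Your proposal is correct and follows essentially the same route as the paper: form $W=U_{-}^{A,\Phi}(U_{-}^{B,\Psi})^{-1}$, observe that $W-\id$ solves the transport equation for the induced pair $(\hat A,\hat\Phi)$ on $M\times\C^{n\times n}$ with vanishing boundary data, and then invoke the injectivity theorem together with the regularity result to conclude that $W$ depends only on $x$. One small remark: the statement of Theorem~\ref{thm:injective_higgs2} does not literally assert that $u^{f}$ is fibrewise constant, but this follows immediately from its conclusion (if $F=\hat\Phi p$ and $\alpha=d_{\hat A}p$ then $-p$ solves the same transport problem as $u^{f}$ with the same boundary condition, so $u^{f}=-p$ by ODE uniqueness), which is exactly how the paper phrases it.
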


\begin{proof} Consider the unique matrix solution
$U_{A,\Phi}:SM\to U(n)$ to the transport equation:
\[\left\{\begin{array}{rl}
(X+A+\Phi)U_{A,\Phi} &\!\!\!=0,\\
U_{A,\Phi}|_{\partial_{-}(SM)} &\!\!\!=\id.\\
\end{array}\right.\]
Then $C^{A,\Phi} = U_{A,\Phi}|_{\partial_{+}(SM)}$. Similarly consider 
the unique matrix solution
$U_{B,\Psi}:SM\to U(n)$ to the transport equation:
\[\left\{\begin{array}{rl}
(X+B+\Psi)U_{B,\Psi} &\!\!\! =0,\\
U_{B,\Psi}|_{\partial_{-}(SM)} &\!\!\! =\id.\\
\end{array}\right.\]
Then $C^{B,\Psi} = U_{B,\Psi}|_{\partial_{+}(SM)}$.

Define $U:=U_{A,\Phi}(U_{B,\Psi})^{-1}:SM\to U(n)$. One easily checks that
$U$ satisfies:
\[\left\{\begin{array}{rl}
XU + AU + \Phi U - UB - U\Psi &\!\!\!= 0,\\
U|_{\partial(SM)} &\!\!\! =\id.\\
\end{array}\right.\]
We are going to show that $U$ is in fact smooth and it only depends
on the base point $x\in M$ thus giving rise to the desired function
$U:M\to U(n)$.

Consider $W:=U-\id:SM\to \C^{n \times n}$, where $\C^{n \times n}$ stands for the set of all
$n\times n$ complex matrices. Clearly $W$ satisfies
\begin{align}
&XW+AW-WB+\Phi W - W \Psi = B - A + \Psi - \Phi, \,\label{eq:uno}\\
&W|_{\partial(SM)}=0.\label{eq:dos}
\end{align}

We now interpret these last two equations in terms of a suitable attenuated
ray transform. We introduce a new connection $\hat{A}$ on the trivial bundle $M\times \C^{n \times n}$ and a new Higgs field $\hat{\Phi}$ as follows: given a matrix $R \in \C^{n \times n}$ we define
$\hat{A}(R):=AR-RB$ and $\hat{\Phi}(R) := \Phi R - R \Psi$. One easily checks that $\hat{A}$ is Hermitian
if $A$ and $B$ are, and also that $\hat{\Phi}$ is skew-Hermitian when $\Phi$ and $\Psi$ are. Then equations (\ref{eq:uno}) and (\ref{eq:dos})
are saying precisely that $I_{\hat{A},\hat{\Phi}}(A-B+\Phi-\Psi)=0$. Note that there
is a unique solution $W:SM\to \C^{n \times n}$ satisfying (\ref{eq:uno}) 
and $W|_{\partial_{-}(SM)}=0$.

Up to this point the argument is valid on any nontrapping compact manifold with strictly convex boundary. We now use that fact that $(M,g)$ is simple, and apply Theorem \ref{thm:injective_higgs} to conclude that there is a smooth function $p:M\to \C^{n \times n}$
such that $p$ vanishes on $\partial M$ and
\[ Xp + Ap - pB + \Phi p - p \Psi = B-A + \Psi - \Phi.\]
Hence $W=p$ and $U=W+\id$ has all the desired properties.
\end{proof}

\end{document}